\newtheorem{lem}{Lemma}[section]
\newtheorem{prop}{Proposition}[section]
\newtheorem{cor}{Corollary}[section]
\newtheorem{thm}{Theorem}[section]
\theoremstyle{definition}
\theoremstyle{remark}
\theoremstyle{remark}
\newtheorem{remark}{Remark}[section]
\numberwithin{equation}{section}
\newcommand{\N}{{\mathbb N}}
\newcommand{\R}{{\mathbb R}}
\definecolor{blu}{rgb}{0,0,1}
\title[Continuum of solutions]{Continuum of solutions for an elliptic problem with critical growth in the gradient}
\author[D. Arcoya]{David Arcoya}
\thanks{D. A. is supported by Ministerio de Econom\'{\i}a y Competitividad (Spain) MTM2012-31799 and Junta de Andaluc\'{\i}a FQM-116.}
\address{David Arcoya 
\newline\indent
Departamento de An\'alisis Matem\'atico, Universidad de Granada, 
\newline\indent
C/Severo Ochoa, 18071 Granada, Spain }
\email{darcoya@ugr.es}
\author[C. De Coster]{Colette De Coster }
\address{Colette de Coster
\newline\indent
Universit\'e de Valenciennes et du Hainaut Cambr\'esis
\newline\indent
LAMAV,  FR CNRS 2956, 
\newline\indent
Institut des Sciences et Techniques de Valenciennes
\newline\indent
F-59313  Valenciennes Cedex 9, France}
\email{Colette.DeCoster@univ-valenciennes.fr}
\author[L. Jeanjean]{Louis Jeanjean}
\address{Louis Jeanjean
\newline\indent
Laboratoire de Math\'ematiques (UMR 6623)
\newline\indent
Universit\'{e} de Franche-Comt\'{e}
\newline\indent
16, Route de Gray 25030 Besan\c{c}on Cedex, France}
\email{louis.jeanjean@univ-fcomte.fr}
\author[K. Tanaka]{Kazunaga Tanaka}
\address{Kazunaga Tanaka
\newline\indent
Department of Mathematics, 
\newline\indent
School of Science and Engineering
\newline\indent
Waseda University
\newline\indent
3-4-1 Ohkubo, Shijuku-ku, Tokyo 169-8555, Japan}
\email{kazunaga@waseda.jp}
\begin{document}
\subjclass[2000]{35J50, 35Q41, 35Q55, 37K45}

\keywords{Elliptic equations, quadratic growth in the gradient, Ambrosetti-Prodi type problems, continuum of solutions, topological degree}

\begin{abstract}
We consider the boundary value problem
\begin{equation*}
- \Delta u = \lambda c(x)u+ \mu(x) |\nabla u|^2 + h(x), \quad u \in H^1_0(\Omega) \cap L^{\infty}(\Omega)
\eqno{(P_{\lambda})}
\end{equation*}
where $\Omega \subset \R^N, N \geq 3$ is a bounded domain with smooth boundary. It is assumed that $c\gneqq 0$, $c,h$ belong to $L^p(\Omega)$ for some  $p > N/2$ and that $\mu \in L^{\infty}(\Omega).$ We explicit a condition which guarantees the existence of a unique solution of $(P_{\lambda})$ when $\lambda <0$ and we show that these solutions belong to a continuum. The behaviour of the continuum depends in an essential way on the existence of a solution of $(P_0)$. It crosses the axis $\lambda =0$ if $(P_0)$ has a solution, otherwise if bifurcates from infinity at the left of the axis $\lambda =0$. Assuming that $(P_0)$ has a solution and strenghtening our assumptions to $\mu(x)\geq \mu_1>0$ and $h\gneqq 0$,
 we show that the continuum bifurcates from infinity on the right of the axis $\lambda =0$ and this implies, in particular, the existence of two solutions for any $\lambda >0$ sufficiently small.
\end{abstract}
\maketitle



\section{Introduction}

For a bounded domain  $\Omega \subset \R^N, N \geq 3$, with smooth boundary (in the sense of condition (A) of \cite[p.6]{LU68}), we study, depending on the parameter $\lambda \in \R$, the existence and multiplicity of solutions of the  boundary value problem
\begin{equation*}
- \Delta u = \lambda c(x)u+ \mu(x) |\nabla u|^2 + h(x), \quad u \in H^1_0(\Omega) \cap L^{\infty}(\Omega).
\eqno{(P_{\lambda})}
\end{equation*}
Here,  
the  hypotheses are
$$
\hspace{1cm}
\left\{ \begin{array}{c} c \mbox{ and } h \mbox{ belong to }  L^p(\Omega) \quad \mbox{for some } p > \frac{N}{2},
\\[2mm]
c\gneqq 0  \mbox{ and } \mu \in L^{\infty}(\Omega).  
\end{array}
\right.
\leqno{\mathbf{(A1)}}
$$
Observe that problem $(P_\lambda)$ is quasilinear due to the presence 
of the quadratic term  $|\nabla u|^2$. Elliptic quasilinear equations with a gradient dependence up to the critical growth $|\nabla u|^2$
were first studied  by Boccardo, Murat and Puel in the 80's and it has been an active field of research until now. To situate our problem with respect to the existing literature we underline that our solutions are  functions $u\in H_0^1(\Omega)\cap L^{\infty}(\Omega)$ satisfying 
$$
\int_\Omega \nabla u \nabla v \, dx
				= 
\lambda \int_\Omega c(x)  u  v \, dx
		+
\int_\Omega \mu(x)  |\nabla u |^2 v \, dx
		+
\int_\Omega h v \, dx \, , \ \forall v\in H_0^1(\Omega)\cap L^{\infty}(\Omega)\,,
$$
%
and that our problem does not satisfy  the so called {\it sign condition} and thus we cannot follow the approach of \cite{BeBoMu,BoGaMu,BoMuPu1.5}. 

Under the additional condition that  
$c(x) \geq \alpha_0$ a.e. in $\Omega$ for some $\alpha_0 >0$, 
the existence of a solution of $(P_{\lambda})$ when $\lambda <0$  is a special case of the results of \cite{BoMuPu1,BoMuPu2.5,BoMuPu3}. 

Also in the case  $\lambda = 0$ (or equivalently when $c\equiv 0$), 
Ferone and Murat \cite{FeMu1, FeMu2} obtained the existence of a solution for 
$(P_{0})$, under the smallness assumption 
\begin{equation}
\label{FMurat}
 ||\mu||_{\infty} ||h||_{\frac{N}{2}} < \mathcal{S}_N^2,
 \end{equation}
where $\mathcal{S}_N >0$ is the best constant in Sobolev's inequality, namely,
$$  
 \mathcal{S}_N = \inf \left\{ \frac{\left( \int_\Omega  |\nabla \phi|^2\, dx\right)^{1/2} }{ \left(\int_\Omega |\phi |^{2^*}\,dx \right)^{1/{2^*}}}\ : \ \phi\in H_0^1(\Omega)\setminus \{0\}\right\} , \mbox{ with } 2^* = 2N/(N-2).
$$
This result was the first one assuming that $h(x) \in L^{N/2}(\Omega)$ but previous results, in the case $\lambda =0$, were obtained under stronger regularity assumptions on $h(x)$ and assuming that a suitable norm of $h(x)$ is small (see \cite{AlPi, AlLiTr, FePo, FePoRa, GrMo, MaPaSa}).
In the  particular case $\mu(x)\equiv \mu>0$ and $h(x)\geq 0$, this existence result of \cite{FeMu1, FeMu2}  can be  improved  using Theorem 2.3 of  Abdellaoui, Dall'Aglio and Peral in \cite{AbDaPe} (see also \cite{AbBi2}) who show that a sufficient condition for the existence of a solution for $(P_0)$ is 
$$
\mu <
 \displaystyle \inf \left\{ \frac{\int_\Omega |\nabla \phi|^2\,dx}{\int_\Omega h(x)\phi^2\,dx} \, :\, \phi \in H_0^1(\Omega) , \ \int_\Omega h(x)\phi^2\,dx >0\right\}.
$$
 In addition, we remark the interesting result by Porretta \cite{Po} for the case $c(x)\equiv 1$, $\mu(x)\equiv 1$ and $h\in L^\infty (\Omega)$. He   has proved that when the problem $(P_0)$ has no solution, then the solutions of $(P_\lambda)$ for $\lambda <0$ blows-up completely, this behaviour being described in terms of the so-called ergodic problem. \medbreak

Concerning the uniqueness a general theory for problems having quadratic growth in the gradient was developed in \cite{BaBlGeKo,BaMu} (see also \cite{ArSe,BaPo}). When 
$c(x) \geq \alpha_0$ a.e. in $\Omega$ for some $\alpha_0 >0$, the results of \cite{BaBlGeKo} imply the uniqueness of the solutions of $(P_{\lambda})$ when $\lambda <0$. 
For $\lambda =0$, the fact that $(P_{0})$ has at most one solution can also 
 be obtained from  \cite{BaBlGeKo}  provided that either $h(x)$ has a sign or it is sufficient small. See Remark \ref{newrequire1} for more details.
\medbreak

The aim in our first result is twofold. First, we handle functions $c(x)$ that can
vanish in some part of $\Omega$. This does not seem to have been considered in the literature.  Specifically,  for the nonnegative and 
nonzero function $c(x)$ we set 
$$
W_c = \{ w \in H^1_0(\Omega) : c(x) w(x) = 0,  \mbox{ a.e. }  x \in \Omega \},
$$
and, if 
  $\mbox{meas}(\Omega \backslash \mbox{Supp} \, c) > 0$, we assume that
 the following condition holds
$$
\left\{ \begin{array}{c} 
\displaystyle \inf_{ \{ u \in W_c,\, ||u||_{H^1_0(\Omega)}=1 \}} \,  \displaystyle \int_{\Omega} \left(|\nabla u|^2 -  ||\mu^+||_{\infty} h^+(x) u^2 \right) dx >0,
\\
\displaystyle \inf_{ \{ u \in W_c,\, ||u||_{H^1_0(\Omega)}=1 \}} \,  \displaystyle \int_{\Omega} \left(|\nabla u|^2 -  ||\mu^-||_{\infty} h^-(x) u^2 \right) dx >0. 
\end{array}
\right. 
\leqno{\mathbf{(Hc)}}
$$ 
Here $\mu^+ = \max(\mu,0)$, $\mu^- = \max(-\mu,0)$, $h^+ = \max(h,0)$ and $h^- = \max(-h,0)$.    As we shall see  condition  {\rm (Hc)}, along with {\rm (A1)}, suffices to guarantee the existence of a solution of $(P_\lambda)$ for $\lambda<0$.  
Moreover, we   prove that, under {\rm (A1)}, the problem $(P_\lambda)$ for $\lambda \leq 0$ has at most one solution. To obtain this uniqueness result it does not seems possible to extend the approach of \cite{BaBlGeKo,BaMu} and we follow a different strategy. As a first step we establish a regularity result inspired by
\cite{BoMuPu2,Gi, GiMo}
 for the solutions of $(P_\lambda)$. Then, using this regularity we derive our uniqueness result. This approach is applied  directly to problem $(P_\lambda)$. However we believe it can also be used to obtain, under slighty stronger regularity assumptions on the data, new uniqueness results for the general class of problems considered in \cite{BaBlGeKo,BaMu}.
\medskip

Our aim is also to point out that the unique  solution of $(P_\lambda)$ for $\lambda<0$ belongs to  a continuum $C$  whose behavior at $\lambda=0$ depends in an essential way on the existence of solution of $(P_0)$. Throughout the paper we assume that the boundary of $\Omega$ is smooth in the sense of condition (A) of \cite[p.6]{LU68}. Under this assumption it is known,  
\cite[Theorem IX.2.2]{LU68}
that 
any solution of $(P_\lambda)$ belong to $C^{0, \alpha}(\overline{\Omega})$ for some $\alpha >0$.  Denoting the solutions set
$$
\Sigma = \{ (\lambda, u) \in \R \times C(\overline \Omega) 
: (\lambda, u) \mbox{ solves } (P_{\lambda})\},
$$
we prove the following result.
\smallskip

\begin{thm}
\label{negativevalue} 
Assume that  {\rm (A1)} holds.
If in addition, in the case that \\
 $\mbox{\rm meas}(\Omega \backslash \mbox{\rm Supp} \, c) > 0$, 
 we  also assume that {\rm (Hc)} holds, then 
 \medskip
\begin{enumerate}
\item[1)] For $\lambda < 0$, $(P_{\lambda})$ has a unique solution $u_{\lambda}$. \medskip
%
%
%
\item[2)]
  There exists an unbounded continuum $C$ of solutions in 
$\Sigma$ whose projection $\mbox{\rm Proj}_{\R}C$  on the 
$\lambda$-axis contains the interval  $]-\infty,0[$.  \medskip

%
%
\item [3)] Moreover, $\limsup_{\lambda\to 0^-}\|u_\lambda\|_\infty<\infty$ if and only if $(P_0)$ has a  solution. In case $(P_0)$ has a solution $u_0${\color{blue},} it is unique and 
$$
\lim_{\lambda\to 0^-} \|u_\lambda - u_0\|_\infty = 0.
$$
If $(P_0)$ has no solution{\color{blue},} then $\lim_{\lambda\to 0^-}\|u_\lambda\|_\infty = \infty$ and $\lambda=0$ is a bifurcation point from infinity for 
$(P_{\lambda})$ (see Figure~\ref{fig1}).
\end{enumerate}
\end{thm}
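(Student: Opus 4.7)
The plan is to prove the three assertions in order, with uniqueness and a priori bounds forming the common backbone. For \emph{existence} in part~(1), I would approximate $c$ by $c_n = c + 1/n$, to which the results of Boccardo--Murat--Puel \cite{BoMuPu1,BoMuPu2.5,BoMuPu3} apply directly, producing a solution $u_n$ of the perturbed problem. The task is to derive uniform bounds in $H_0^1 \cap L^\infty$: on $\mathrm{Supp}\, c$ the zero-order term $\lambda c(x) u_n$ is coercive since $\lambda<0$, and on $\Omega\setminus \mathrm{Supp}\,c$ condition (Hc), tested against truncations of $u_n$ supported off $\mathrm{Supp}\,c$, supplies the missing coercivity. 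A Moser--Stampacchia iteration upgrades this to an $L^\infty$ bound, after which strong $H^1$ compactness allows passage to the limit in the quadratic gradient term.

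For \emph{uniqueness} in the range $\lambda \leq 0$, the Barles--Blanc-Georgelin--Kobylanski strategy does not apply once $c$ vanishes, so a new route is needed. First, I would establish, in the spirit of Grenon and Boccardo--Murat--Puel, that any solution of $(P_\lambda)$ belongs to $W^{1,q}(\Omega)$ for some $q > N$, with a bound depending only on the data. Given two solutions $u_1, u_2$, writing $w = u_1 - u_2$ and linearizing the quadratic term as
\[
|\nabla u_1|^2 - |\nabla u_2|^2 = \nabla w \cdot \nabla(u_1+u_2),
\]
yields the linear equation $-\Delta w - \mu(x)\nabla(u_1+u_2)\cdot \nabla w - \lambda c(x) w = 0$ whose drift lies in $L^q$ with $q > N$. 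Testing against $w$ multiplied by an exponential weight chosen to absorb the drift, together with $\lambda c(x)\leq 0$ and (Hc) on the set where $c$ vanishes, forces $w\equiv 0$.

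For part~(2), I reformulate $(P_\lambda)$ as a fixed-point equation $u = T_\lambda(u)$ in $C(\overline\Omega)$ with $T_\lambda$ compact. The a priori bound just obtained localizes all solutions of $(P_{\lambda_0})$ in some ball $B_R$; a homotopy deforming $\mu$ and $h$ to zero keeps the problem well posed along the way (thanks to the same a priori bound) and reduces it to $-\Delta u = \lambda_0 c(x) u$, whose only solution is $u\equiv 0$. Hence $\deg(I - T_{\lambda_0}, B_R, 0) = +1$ for every $\lambda_0 < 0$, and the Leray--Schauder continuation principle produces an unbounded connected component $C \subset \Sigma$ whose projection contains $(-\infty,0)$; by uniqueness, $C$ coincides there with the graph $\{(\lambda, u_\lambda)\}$.

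Finally, part~(3) is a compactness argument. If $\limsup_{\lambda \to 0^-}\|u_\lambda\|_\infty < \infty$, the uniform $C^{0,\alpha}$ estimate from \cite[Theorem~IX.2.2]{LU68} lets us extract a subsequence $u_{\lambda_n} \to u_0$ in $C(\overline\Omega)$, and passing to the limit in the weak formulation (with strong $H^1$ convergence provided by the $L^\infty$ bound) yields a solution $u_0$ of $(P_0)$, unique by step~2; so the whole family converges to $u_0$ in $L^\infty$. Conversely, if $(P_0)$ has a solution, the same compactness argument applied to any hypothetical blow-up sequence produces a contradiction, ruling out blow-up. When $(P_0)$ has no solution, $\|u_\lambda\|_\infty$ must therefore diverge as $\lambda \to 0^-$, which is exactly bifurcation from infinity at $\lambda = 0$. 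The main obstacle throughout is the uniqueness step: selecting the correct exponential weight and rigorously justifying the integration by parts, which is why the improved $W^{1,q}$ regularity with $q>N$ is indispensable.
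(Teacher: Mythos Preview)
Your overall architecture is sound, but Part~3 has a genuine gap. The converse direction (``if $(P_0)$ has a solution, the same compactness argument applied to any hypothetical blow-up sequence produces a contradiction'') is circular: compactness presupposes boundedness, so it cannot be invoked against a sequence assumed to blow up. The paper's mechanism is different and constructive: if $u_0$ solves $(P_0)$, then $\beta=u_0+\|u_0\|_\infty$ and $\alpha=u_0-\|u_0\|_\infty$ are ordered upper and lower solutions of $(P_\lambda)$ for every $\lambda\leq 0$ (since $\lambda c(x)\leq 0$ and $\alpha\leq 0\leq\beta$), and the sub/supersolution theorem together with uniqueness forces $\|u_\lambda\|_\infty\leq 2\|u_0\|_\infty$. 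The same comparison, with any fixed $\bar\lambda<0$ in place of $0$, yields $\|u_\lambda\|_\infty\leq 2\|u_{\bar\lambda}\|_\infty$ for all $\lambda\leq\bar\lambda$; this monotonicity is what upgrades $\liminf_{\lambda\to 0^-}\|u_\lambda\|_\infty<\infty$ to $\limsup<\infty$, and hence gives $\lim=\infty$ (not merely $\limsup=\infty$) in the non-solvable case. Your sketch omits both points.

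The remaining parts follow a different route from the paper, though not an unreasonable one. For existence, the paper does not approximate $c$; it builds ordered sub/supersolutions by solving auxiliary constant-$\mu$ problems via the Kazdan--Kramer change of variable $v=(e^{\mu u}-1)/\mu$, which turns $(P_\lambda)$ into a semilinear variational problem whose coercivity is exactly (Hc). Your approximation $c_n=c+1/n$ may work, but the sentence ``(Hc), tested against truncations of $u_n$ supported off $\mathrm{Supp}\,c$'' is not a proof: $u_n\notin W_c$, and the quadratic gradient term obstructs the naive energy estimate. For uniqueness, the paper establishes only $W^{1,N}_{loc}$ regularity (bootstrap plus Miranda's interpolation between $C^{0,\alpha}$ and $W^{2,q/2}_{loc}$), not global $W^{1,q}$ with $q>N$; the local regularity suffices because the test function $G_k(v)=(|v|-k)_+\,\mathrm{sgn}(v)$ has compact support for $k>0$ (since $v\in C(\overline\Omega)$ vanishes on $\partial\Omega$), and the drift $\nabla(u_1+u_2)$ is then controlled in $L^N$ on the shrinking sets $\{|v|\geq k\}$. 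Your exponential-weight plan needs the stronger global regularity, which you would have to justify separately. Note also that (Hc) plays no role whatsoever in the uniqueness argument; only $\lambda c(x)\leq 0$ is used.
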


\begin{figure}
            \label{fig1}
\scalebox{1} 
{
\begin{pspicture}(0,-2.22)(9.061894,2.22)
\definecolor{color29}{rgb}{0.2,0.2,0.2}
\psline[linewidth=0.04cm,arrowsize=0.05291667cm 2.0,arrowlength=1.4,arrowinset=0.4]{->}(0.16,-1.94)(7.24,-2.0)
\psline[linewidth=0.04cm,arrowsize=0.05291667cm 2.0,arrowlength=1.4,arrowinset=0.4]{->}(5.42,-2.2)(5.44,2.24)
\usefont{T1}{ptm}{m}{n}
\rput(7.461456,-2){$\lambda$}
\usefont{T1}{ptm}{m}{n}
\rput(6.0,2.005){$\|u\|_\infty$}
\psbezier[linewidth=0.04,linecolor=color29](0.0,-1.34)(0.32,-1.32)(0.92,-1.42)(1.96,-1.18)(3.0,-0.94)(3.6,-0.94)(4.34,-0.02)(5.08,0.9)(5.12,0.92)(5.3,2.2)
\usefont{T1}{ptm}{m}{n}
\rput(4.5614552,0.905){$C$}
\end{pspicture} 
}
\caption{Bifurcation diagram when $(P_0)$ has no  solution}
\end{figure}

\begin{remark}\label{comparaison}
Condition {\rm (Hc)} connects the two limit cases: 
$c(x) \geq \alpha_0 >0$ and $c\equiv 0$ ($\lambda=0$). If $c(x) >0$ a.e. on $\Omega$ we have $\mbox{meas}(\Omega \backslash \mbox{Supp} \, c) =0$. 
Thus, 
under {\rm (A1)}, a solution of $(P_{\lambda})$ exists for any $\lambda <0$. If $\mbox{meas}(\Omega \backslash \mbox{Supp} \, c)  >0$, the situation is more delicate.
When both $\mu(x) \geq 0$ and $h(x) \geq 0$, {\rm (Hc)} relates the {\it size} of $\mu(x)h(x)$ to the {\it size} of $\Omega \backslash \mbox{Supp} \, c$, 
showing 
that the signs of  $\mu(x)$ and $h(x)$ with respect to one another strongly influence the existence of solution of $(P_{\lambda})$ when $\lambda <0$. Indeed, {\rm (Hc)} holds if either $\mu(x) \geq 0$ and $h(x) \leq 0$ a.e. in $\Omega$, or $\mu(x) \leq 0$ and $h(x) \geq 0$ a.e. in $\Omega$.  Moreover, 
it holds true under condition \eqref{FMurat} since, from the Sobolev embedding, it follows that
$$
\int_{\Omega} h(x) v^2 dx \leq ||h||_{N/2}||v||^2_{2^*} \leq \frac{1}{\mathcal{S}_N^2}||h||_{N/2}||\nabla v||_2^2.
$$
Hence we obtain the above refered results as  a corollary.  In  Remark \ref{ktL2} we show that {\rm (Hc)} is somehow sharp for the existence of solution of $(P_{\lambda})$.
\end{remark}

\medbreak

\begin{remark}
We shall also prove, in Corollary \ref{prop3a}, that a sufficient condition for the existence of solution of $(P_0)$ is that 
condition {\rm (Hc)} is satisfied with $c(x)\equiv 0$, i.e that the  following condition is hold
$$
\left\{ \begin{array}{c} 
\displaystyle \inf_{ \{  u \in H^1_0(\Omega),\, ||u||_{H^1_0(\Omega)}=1 \}} \,  \displaystyle \int_{\Omega} \left(|\nabla u|^2 -  ||\mu^+||_{\infty} h^+(x) u^2 \right) dx >0,
\\
\displaystyle \inf_{ \{  u \in H^1_0(\Omega),\, ||u||_{H^1_0(\Omega)}=1 \}} \,  \displaystyle \int_{\Omega} \left(|\nabla u|^2 -  ||\mu^-||_{\infty} h^-(x) u^2 \right) dx >0. 
\end{array}
\right.
\leqno{\mathbf{(H0)}} 
$$ 
%
\end{remark}

Our next result show that the existence of a solution of $(P_0)$  suffices to guarantee the existence of a continuum of solutions $C \subset \Sigma$ such that $\mbox{Proj}_{\R}C$ contains $]-\infty,a]$ for some $a>0$.

\begin{thm}\label{th1.1a}
Assume {\rm (A1)} and suppose that $(P_0)$ has a solution.  Then
\begin{enumerate}
\item[1)] For all $\lambda \leq 0$, $(P_\lambda)$ has a, unique, solution $u_\lambda$. \medskip

\item[2)] There exists a continuum $C\subset\Sigma$ such that \smallskip
\begin{enumerate}
	\item $\{(\lambda,u_\lambda):\, \lambda\in \,]-\infty,0]\, \}\subset C$. \medskip
	\item $C\cap ([0,\infty[\,\times C(\overline{\Omega}))$ is a unbounded set in
$\R\times C(\overline{\Omega})$.
\end{enumerate}
\smallskip
In particular, $\mbox{Proj}_{\R}C$ contains $]-\infty,a]$ for some $a>0$.
\end{enumerate}
\end{thm}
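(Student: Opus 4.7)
The plan is to combine a Leray--Schauder continuation argument with an a priori bound furnished by the existence of $u_0$.

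\textbf{Part 1 (existence and uniqueness for $\lambda\leq 0$).} The uniqueness claim for $\lambda\leq 0$ is already provided under (A1) alone (this is one of the contributions described in the paragraph just before Theorem~\ref{negativevalue}). For existence I would realize $(P_\lambda)$ as a fixed-point problem $u=T_\lambda(u)$ with $T_\lambda:C(\overline\Omega)\to C(\overline\Omega)$ compact and continuous (the $C^{0,\alpha}$ regularity cited from \cite{LU68} together with standard $L^p$ theory makes this routine). The key is an a priori $L^\infty$ estimate valid for all $\lambda\in[\lambda_0,0]$, obtained by comparing any solution $u$ with $u_0$: testing the difference of the equations satisfied by $u$ and $u_0$ with a Cole--Hopf-type cut-off, and exploiting the favourable sign of $\lambda c(x) u-\lambda c(x) u_0$ on $\{u\geq u_0\}$ (and symmetrically on $\{u\leq u_0\}$) yields a uniform bound. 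With such a bound in hand, $\deg(I-T_\lambda,B_R,0)$ is well defined on a large ball $B_R\subset C(\overline\Omega)$ and constant in $\lambda\in[\lambda_0,0]$, giving a solution $u_\lambda$ for every $\lambda\leq 0$.

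\textbf{Part 2 (construction of the continuum).} To extend past $\lambda=0$ I would apply a Leray--Schauder/Whyburn-type global continuation theorem to the compact map $\Phi(\lambda,u)=u-T_\lambda(u)$. For this, one first computes the local index $i(\Phi(0,\cdot),u_0)$ and shows it is nonzero; then the connected component $C\subset\Sigma$ through $(0,u_0)$ is either unbounded in $\R\times C(\overline\Omega)$ or meets the boundary of any admissible open set. Uniqueness on $(-\infty,0]$ forces $C$ to contain the branch $\{(\lambda,u_\lambda):\lambda\leq 0\}$, proving (a). Since Part 1 gives uniform $L^\infty$ bounds on this branch on every compact subinterval of $(-\infty,0]$, the unboundedness of $C$ cannot be realised to the left of the axis, so it must happen in $[0,\infty)\times C(\overline\Omega)$, giving (b). Projecting onto $\R$ then yields an interval $(-\infty,a]$ with $a>0$ in $\mbox{Proj}_\R C$.

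\textbf{Main obstacle.} The delicate step is verifying that $i(\Phi(0,\cdot),u_0)\neq 0$: the quadratic gradient nonlinearity is not a small perturbation near $u_0$, so direct linearisation is unavailable. I would handle it through a global homotopy that preserves a priori bounds and isolates $u_0$ throughout the deformation, for instance a Cole--Hopf-type substitution that removes the $\mu(x)|\nabla u|^2$ term and reduces the index computation to a semilinear problem with known degree $\pm 1$, or a homotopy $t\mapsto t\mu$ (combined with rescaling $h$) that connects to a linear problem. A secondary but nontrivial issue is to guarantee that the a priori bound for $\lambda$ in a small right-neighbourhood of $0$ is strong enough so that the continuation actually crosses the axis rather than terminating on it; the uniqueness of $u_0$ combined with a strict sign condition at $(0,u_0)$ should suffice to provide this local persistence.
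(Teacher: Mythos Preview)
Your overall architecture (fixed-point formulation, index computation at $(0,u_0)$, global continuation) matches the paper's, but there are two genuine gaps and one place where the paper's route is substantially simpler.

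\textbf{Existence for $\lambda\leq 0$.} The paper does not use degree theory here at all. Since $(P_0)$ has a solution $u_0$ and $\lambda c(x)\leq 0$, one checks directly that $\beta=u_0+\|u_0\|_\infty$ and $\alpha=u_0-\|u_0\|_\infty$ are ordered upper and lower solutions of $(P_\lambda)$; Theorem~\ref{sousBo} then gives a solution with $\|u_\lambda\|_\infty\leq 2\|u_0\|_\infty$ in one stroke. Your comparison-by-testing scheme may work, but it is unnecessary.

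\textbf{The index computation.} Your first suggestion (a Cole--Hopf change of variable) is precisely what the paper rules out in the introduction: it is not available when $\mu$ depends on $x$. Your second suggestion, a homotopy $t\mapsto t\mu$, is the right idea but incomplete. At $\lambda=0$ the zeroth-order coefficient vanishes identically, and it is not clear how to get a uniform $L^\infty$ bound on solutions of $-\Delta u=(1-t)\mu(x)|\nabla u|^2+h(x)$ for all $t\in[0,1]$ under {\rm (A1)} alone (note $\mu$ may change sign, so $u_0$ is not an upper solution for $t<1$). The paper resolves this by a \emph{two-step} homotopy: first deform the zeroth-order term from $0$ to $-1$ (a priori bounds come from the $\alpha,\beta$ above), and only then send $\mu\to 0$. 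Once the coefficient is $-1<0$ everywhere, Proposition~\ref{prop1} yields upper and lower solutions valid for every $t\in[0,1]$, and the homotopy lands on a linear problem with degree $1$.

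\textbf{Unboundedness on the right.} Your sentence ``the unboundedness of $C$ cannot be realised to the left of the axis'' is false: the branch $\{(\lambda,u_\lambda):\lambda\leq 0\}$ is already unbounded in $\R\times C(\overline\Omega)$ simply because $\lambda\to-\infty$. Knowing only that $C$ is unbounded therefore gives nothing for $\lambda>0$. The paper invokes the two-sided Leray--Schauder/Rabinowitz theorem (Theorem~\ref{1noacot}), which, from $i(I-T_0,u_0)=1$ and uniqueness of $u_0$, produces two components $C^+\subset[0,\infty[\times C(\overline\Omega)$ and $C^-\subset\,]{-}\infty,0]\times C(\overline\Omega)$, \emph{each} unbounded. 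Part~2(b) is then immediate from the unboundedness of $C^+$; your ``secondary issue'' about crossing the axis disappears as well.
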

\medskip

Finally, in the last part of the paper and under stronger assumptions, we study the behaviour in the half space $\{ \lambda >0 \} \times C(\overline \Omega)$ of the branch $C \subset \Sigma$ obtained in Theorem \ref{th1.1a}  and we obtain a multiplicity result.
\vspace{0.3cm}

First we note that, in case $\mu\equiv 0$, we cannot have multiplicity results except when $\lambda$ is an eigenvalue of the problem
\begin{equation}
\label{eigenvaluep}
-\Delta \varphi_{1} = \gamma c (x) \varphi_{1},  \quad \varphi_1 \in H^1_0(\Omega),
\end{equation}
and $h(x)$ satisfies the {\it ``good''} orthogonality condition. Hence, there is no hope to obtain multiplicity results just under our assumption {\rm (A1)}.
\vspace{0.3cm}

Multiplicity results have been considered by Abdellaoui, Dall'Aglio and  Peral \cite{AbDaPe} (see also \cite{AbBi2, Si}) for $(P_{\lambda})$ in the case $\lambda=0$ and when $\mu(x)$ is replaced by some $g(u)$ satisfying $u g(u)<0$. In a recent paper, Jeanjean and Sirakov \cite{JeSi} study the case $\lambda>0$ when
 $\mu(x)$ is a positive constant but $h(x)$ may change sign and satisfy a condition related to   \eqref{FMurat}. Using Theorem 2 of 
\cite{JeSi} an explicit $\lambda_0 >0$ can be derived under which $(P_{\lambda})$ has two solutions whenever $\lambda \in \,]0, \lambda_0[$. 
 
\vspace{0.3cm}
The above quoted multiplicity  results have the common property that the coefficient of $|\nabla u|^2$ (either $g(u)$ or the constant $\mu$) does not depend on $x$. This allows the authors to make a change a variable, similar to the one used in \cite{KaKr}, in order to transform the problem in a semilinear one (i.e. without gradient dependence). Then variational methods are used to prove multiplicity results on the transformed problem. In our case, we consider problem $(P_{\lambda})$ with a non constant function coefficient $\mu(x)$, which implies that this change of variable is no more possible.
\medbreak

We replace {\rm (A1)} by the stronger assumption
$$
\left\{ 
\begin{array}{c} 
c \mbox{ and } h \mbox{ belongs to }  L^p(\Omega) \quad \mbox{for some } p > \frac{N}{2},
\\[2mm]
   c\gneqq 0, \, h \gneqq 0 \mbox{ and } \mu_2\geq \mu(x) \geq \mu_1 \mbox{ for some } \mu_2\geq\mu_1 >0.  
\end{array}
\right.
\leqno{\mathbf{(A2)}}
$$
Let $\gamma_1 >0$  denote the first eigenvalue of the problem  (\ref {eigenvaluep}). We prove the following theorem.

\begin{thm}
\label{th3}
Assume $\mathrm{(A2)}$ and suppose that $(P_0)$ has a solution.
Then the continuum $C \subset \Sigma$ obtained in Theorem \ref{th1.1a} consists of non negative functions{\color{blue},} 
its projection  $ \mbox{\rm Proj}_{\R} C$ 
on the $\lambda$-axis is an unbounded interval  $]-\infty,\overline \lambda] \subset {]-\infty,\gamma_1[}$ containing $\lambda =0$  and  $C \subset \Sigma$ bifurcates from infinity to the right of the axis $\lambda = 0$. 
Moreover, there exists $\lambda_0\in {]0,\overline \lambda]}$
such that
for all $\lambda \in {]0, \lambda_0[}$, the section $C\cap (\{\lambda\}\times C(\overline\Omega))$
 contains two distinct non negative solutions of $(P_{\lambda})$ in $\Sigma$ 
 (see Figure 2).
\end{thm}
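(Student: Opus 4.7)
The argument divides into four main steps. First, to show that every solution on $C$ is non-negative, I would adapt the continuation construction of Theorem~\ref{th1.1a} to the auxiliary problem
$$
-\Delta u = \lambda c(x) u^+ + \mu(x)|\nabla u|^2 + h(x), \quad u \in H^1_0(\Omega)\cap L^\infty(\Omega).
$$
Testing with $-u^-$ and using $u^+ u^- = 0$, $\mu \geq \mu_1 > 0$ and $h \geq 0$ forces $u \geq 0$, so the solutions of the auxiliary problem coincide with the non-negative solutions of $(P_\lambda)$. The continuation therefore yields a continuum of the same type as in Theorem~\ref{th1.1a}, living entirely in the non-negative cone; this is the continuum $C$. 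Since $h \gneqq 0$, no $u$ on $C$ is identically zero, and the strong maximum principle gives $u > 0$ in $\Omega$.

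Next, to cap the projection by $\gamma_1$, I would test $(P_\lambda)$ with the principal positive eigenfunction $\varphi_1$ of \eqref{eigenvaluep}. The eigenvalue relation gives $\int \nabla u \cdot \nabla \varphi_1\,dx = \gamma_1 \int c u \varphi_1\,dx$, so
$$
(\gamma_1 - \lambda) \int_\Omega c u \varphi_1\,dx = \int_\Omega \mu |\nabla u|^2 \varphi_1\,dx + \int_\Omega h \varphi_1\,dx > 0,
$$
because $h \gneqq 0$ and $\varphi_1 > 0$. As $u > 0$ in $\Omega$, the left integral is strictly positive, forcing $\lambda < \gamma_1$. Combined with the connectedness of $C$ and Theorem~\ref{th1.1a}, $\text{Proj}_\R C$ is therefore an interval contained in $]-\infty, \gamma_1[$ and containing $]-\infty, 0]$.

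The heart of the proof, and the main obstacle, is an a priori $L^\infty$ estimate: for each $\lambda_* > 0$, there exists $M(\lambda_*)$ such that $\|u\|_\infty \leq M(\lambda_*)$ whenever $(\lambda, u) \in C$ with $\lambda \geq \lambda_*$. Because $\mu$ is non-constant, the Kazdan--Warner substitution is unavailable; my plan is to work with $w = (e^{\mu_1 u}-1)/\mu_1 \geq 0$, which by $\mu \geq \mu_1 > 0$ satisfies the super-solution inequality
$$
-\Delta w \geq (1 + \mu_1 w)\left[\frac{\lambda c}{\mu_1}\ln(1 + \mu_1 w) + h\right],
$$
a semi-linear inequality with super-linear right-hand side. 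Combining this with testing against $\varphi_1$ and a Moser-type iteration should yield the bound, in the spirit of the arguments in \cite{AbDaPe} and \cite{JeSi}. Once this is available, the closedness of $C$ forces $\overline\lambda := \sup \text{Proj}_\R C$ to be attained, and the unboundedness of $C$ in $\R \times C(\overline\Omega)$ from Theorem~\ref{th1.1a} rules out blow-up whenever $\lambda$ stays bounded away from $0$, hence produces a sequence $(\lambda_n, u_n) \in C$ with $\lambda_n \to 0^+$ and $\|u_n\|_\infty \to \infty$, i.e.\ bifurcation from infinity to the right of $\lambda = 0$.

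For the multiplicity assertion, I would use Theorem~\ref{th1.1a}(3), which gives $u_\lambda \to u_0$ in $C(\overline\Omega)$ as $\lambda \to 0^-$, to extend the bounded branch continuously through $(0, u_0)$ into small $\lambda > 0$ with $\|u\|_\infty$ staying close to $\|u_0\|_\infty$, and combine it with the blow-up sequence just produced. For $\lambda_0 > 0$ small enough and $\lambda \in ]0, \lambda_0[$, this exhibits on $C$ at least one solution with $\|u\|_\infty$ near $\|u_0\|_\infty$ and one with $\|u\|_\infty$ arbitrarily large; being distinct in norm they are distinct solutions, yielding the two required elements in $C \cap (\{\lambda\} \times C(\overline\Omega))$.
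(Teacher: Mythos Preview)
Your treatment of non-negativity and of the barrier $\lambda<\gamma_1$ is essentially the paper's Lemma~\ref{l1}; you could simplify by noting that, once $\mbox{Proj}_\R C\subset\,]-\infty,\gamma_1[$ is known, testing $(P_\lambda)$ directly with $u^-$ (using $\lambda<\gamma_1$) already gives $u\geq 0$, so there is no need to redo the continuation for a modified problem with $u^+$.

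The genuine gap is in the a priori bound. With $w=(e^{\mu_1 u}-1)/\mu_1$ and $\mu(x)\geq\mu_1$ you obtain only a \emph{super}-solution inequality
\[
-\Delta w \;\geq\; (1+\mu_1 w)\Big[\lambda c\, g_1(w)+h\Big],\qquad g_1(s)=\tfrac{1}{\mu_1}\ln(1+\mu_1 s).
\]
This inequality has the right sign for the Brezis--Turner first step (testing with $\varphi_1$ gives a bound on $\int (1+\mu_1 w)[c\,g_1(w)+h]\varphi_1$), but it points the \emph{wrong way} for bounding $\|\nabla w\|_2$ or for any Moser iteration: testing a super-solution inequality with $w\geq 0$ yields $\int|\nabla w|^2\geq\cdots$, not $\leq\cdots$, and Moser-type $L^\infty$ bounds require an upper bound on $-\Delta w$, not a lower one. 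The references you cite (\cite{AbDaPe}, \cite{JeSi}) work with constant $\mu$, where the single transform gives an \emph{equation}; with non-constant $\mu$ this is precisely what fails.

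The paper closes this gap by introducing a second transform $w_2=(e^{\mu_2 u}-1)/\mu_2$ using the \emph{upper} bound $\mu(x)\leq\mu_2$, which yields the needed \emph{sub}-solution inequality $-\Delta w_2\leq A_2(1+\mu_2 w_2)[c\,g_2(w_2)+h]$. The two transforms are linked by the identity $1+\mu_1 w_1=(1+\mu_2 w_2)^{1-\theta}$ with $\theta=(\mu_2-\mu_1)/\mu_2$, so the $\varphi_1$-estimate for $w_1$ becomes an estimate for $(1+\mu_2 w_2)^{1-\theta}$. One then tests the $w_2$-inequality with $w_2$, interpolates via H\"older between the $\varphi_1$-weighted integral (exponent $\alpha$) and a term controlled by $\|(c+h)^{1/q}w_2/\varphi_1^\tau\|_q$ (Lemma~\ref{tec2}), with $q,\tau$ chosen through Lemma~\ref{tec1} so that $q(1-\alpha)<2$; this yields $\|\nabla w_2\|_2\leq C$, and a Trudinger-type bound then gives $\|w_2\|_\infty\leq C$. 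Your outline needs this second transform and the interplay between the two to go through.

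A minor remark: the convergence $u_\lambda\to u_0$ as $\lambda\to 0^-$ you invoke is Theorem~\ref{negativevalue}, Point~3), not part of Theorem~\ref{th1.1a}.
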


\begin{remark}
In order to prove Theorem  \ref {th3} the key points are the observation that the continuum cannot cross the line $\lambda = \gamma_1$ and the derivation of a priori bounds, for any $a>0$, on the (positive) solutions of $(P_{\lambda})$ for 
$\lambda \in\, ]a, \gamma_1]$. These a priori bounds are obtained by an extention of the classical approach of Brezis and Turner \cite{BT77}.
\end{remark}

\begin{remark}
The fact that on a MEMS type equation, involving a critical term in gradient, a multiplicity result had also been derived throught the study of the behaviour of a continuum \cite{WeYe} was pointed out to us by D. Ye after the completion of the present work.
\end{remark}

The paper is organized as follows. In Section \ref{Section1} we recall some results concerning the method of lower and upper solutions as well as a continuation theorem. In Section \ref{Section02} we derive various existence results for problems of the type of $(P_{\lambda})$ when $\lambda \leq 0$. Section \ref{Sectionuniqueness-0} deals with the uniqueness issue. In Section \ref{Sectionuniqueness} we establish the existence of a continuum of solutions. Section \ref{Section2} is devoted to the study of the branch in the half space $\{ \lambda >0\} \times C(\overline{\Omega})$ and in particular to the derivation of a priori bounds, see Proposition
\ref{bounds1}. The proofs of our three theorems are given in Section \ref{Proofs}. Finally a technical result, Lemma \ref{comp}, is proved in Section \ref{appendix}.




\begin{figure} 
\label{fig22}
\scalebox{1} 
{
\begin{pspicture}(0,-2.07)(7.2,2.09)
\psline[linewidth=0.04cm,arrowsize=0.05291667cm 2.0,arrowlength=1.4,arrowinset=0.4]{->}(0.0,-1.51)(5.34,-1.47)
\psline[linewidth=0.04cm,arrowsize=0.05291667cm 2.0,arrowlength=1.4,arrowinset=0.4]{->}(0.86,-2.05)(0.88,2.01)
\psbezier[linewidth=0.04
](0.04,-1.11)(1.14,-1.09)(3.6,-0.65)(2.72,-0.01)(1.84,0.63)(2.64,0.11)(1.88,0.61)(1.12,1.11)(1.1,1.05)(1.02,1.93)
\psline[linewidth=0.04cm,linestyle=dashed,dash=0.16cm 0.16cm](2.98,-0.35)(2.98,-1.47)
\usefont{T1}{ptm}{m}{n}
\rput(2.91,-1.725){$\lambda_0$}
\usefont{T1}{ptm}{m}{n}
\rput(5.6,-1.725){$\lambda$}
\usefont{T1}{ptm}{m}{n}
\rput(0.34,1.895){$\| u\|_\infty$}
\usefont{T1}{ptm}{m}{n}
\rput(2,0.9){$C$}
\end{pspicture}
}
\caption{Bifurcation diagram when $(P_0)$ has a solution.} 
\end{figure}
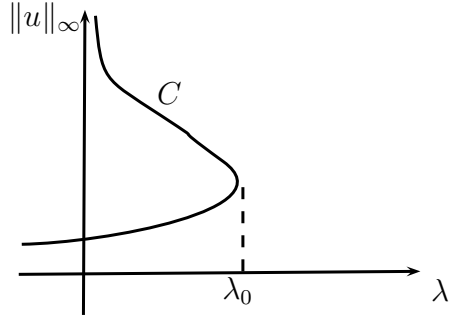

\vskip9pt
\begin{center}
\textbf{Notation.}
\end{center}
\begin{enumerate}{\small
\item $\Omega \subset \R^N$, $N \geq 3$ is a bounded domain whose boundary $\partial \Omega$ is sufficiently regular as to satisfies the condition (A) of \cite[p.6]{LU68}. A sufficient condition for (A) is that $\partial \Omega$ satisfies the exterior uniform cone condition.
\item For any measurable set $\omega \subset \R^N$ we denote by $\mbox{meas}(\omega)$ its Lebesgue measure.
\item  For $p\in [1,+\infty[$, the norm $(\int_{\Omega}|u|^pdx)^{1/p}$
in $L^p(\Omega)$ is denoted by $\|\cdot\|_p$. We denote by $p^{\prime}$
the  conjugate exponent of $p$, namely $p^{\prime} = p/(p-1).$ The norm in $L^{\infty}(\Omega)$ is $\|u\|_{\infty}=\mbox{esssup}_{x\in \Omega}|u(x)|$.
\item For $v \in L^1(\Omega)$ we define $v^+= \max(v,0)$ and $v^- =
\max(-v,0)$.
\item For $h\in L^1(\Omega)$ we denote $h\gneqq 0$ if $h(x)\geq 0$ for a.e. $x\in\Omega$ and $\mbox{meas}(\{x\in\Omega:
h(x)>0\})>0$.
\item We denote by $H$ the space $H^1_0(\Omega)$ equipped with the Poincar\'{e}
norm $||u||:=\left( \int_\Omega |\nabla u|^2\,dx\right)^{1/2}$.
\item We denote by $X$ the space $H_0^1(\Omega) \cap L^{\infty}(\Omega)$.
\item We denote by $B_r(u_0) $ the ball $ \{u \in H\, :\, ||u-u_0||<r\}.$
\item We denote by $C,D>0$ any positive constants which are not
essential in the problem and may vary from one line to another.
}
\end{enumerate}


\section{Preliminaries}
\label{Section1}

In our proofs we shall use the method of lower and upper solutions. We present here Theorem 3.1 of \cite{BoMuPu4} adapted to our setting. We consider the
boundary value problem
\begin{equation}
\label{bo-Mu-Pu}
- \Delta u +H(x,u, \nabla u)=f, \quad u \in X
\end{equation}
where $f\in L^1(\Omega)$ and  $H$ is a Carath\' eodory function 
from $\Omega \times \R \times \R^N$ into $\R$ with a natural growth, i.e., for which there exist a nondecreasing function $b$ from $[0,+\infty[$ into $[0,+\infty[$ and $k \in L^1(\Omega)$ such that, for a.e. $x\in \Omega$ and all $(u,\xi)\in \mathbb R\times\mathbb R^{N}$,
$$
|H(x,u, \xi)| \leq b(|u|) [k(x) + |\xi|^2].
$$
We also recall (see \cite{BoMuPu4}) that a {\it lower solution} (respectively, an {\it upper solution}) of (\ref{bo-Mu-Pu}) is a function $\alpha$ (respectively, $\beta$) $\in H^1(\Omega) \cap L^{\infty}(\Omega)$ such that
$$
- \Delta \alpha + H(x,\alpha, \nabla \alpha)\leq f(x) \mbox{ in } \Omega, \quad \alpha \leq 0 \mbox{ on } \partial \Omega,
$$
(respectively,
$$
- \Delta \beta +H(x,\beta, \nabla \beta)\geq f(x) \mbox{ in } \Omega, \quad \beta \geq 0 \mbox{ on } \partial \Omega).
$$ 
This has to be understood 
in the sense that $\alpha^+\in H^1_0(\Omega)$ and
$$
\int_{\Omega} \nabla\alpha\nabla v \,dx+\int_{\Omega} H(x,\alpha,\nabla \alpha) v\,dx\leq \int_{\Omega} f(x)v\,dx,
$$
(respectively, $\beta^-\in H^1_0(\Omega)$ and 
$
\int_{\Omega} \nabla\beta\nabla v \,dx+\int_{\Omega} H(x,\beta,\nabla \beta) v\,dx\geq \int_{\Omega} f(x)v\,dx
$), for all $v\in H^1_0(\Omega)\cap L^\infty (\Omega)$ with $v\geq 0$ a.e. in $\Omega$.

\begin{thm}[Boccardo-Murat-Puel \cite{BoMuPu4}]
\label{sousBo}
If there exist a lower solution $\alpha$ and an upper solution $\beta$ of {\rm(\ref{bo-Mu-Pu})} with $\alpha \leq \beta$ a.e. in $\Omega$, then there exists a solution $u$ of {\rm(\ref{bo-Mu-Pu})} with $\alpha \leq u \leq \beta$ a.e. in $\Omega$.
\end{thm}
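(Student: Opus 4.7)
The plan is to reduce problem \eqref{bo-Mu-Pu} to an approximating family of problems with bounded nonlinearity, to solve each by a fixed point argument, and then to use exponential test functions in the spirit of Boccardo--Murat--Puel to confine the limit solution to the order interval $[\alpha,\beta]$.

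I would begin by replacing $H(x,u,\xi)$ with $\widetilde H(x,u,\xi):=H(x,Tu,\xi)$, where $Tu(x):=\max\{\alpha(x),\min\{u(x),\beta(x)\}\}$. Since $\alpha,\beta\in L^\infty(\Omega)$, the modified nonlinearity now obeys the natural growth bound $|\widetilde H(x,u,\xi)|\le b(M)[k(x)+|\xi|^2]$ with $M:=\max(\|\alpha\|_\infty,\|\beta\|_\infty)$ and the \emph{same} constant $b(M)$ for every $u$. I would then truncate further by setting
$$
\widetilde H_n(x,u,\xi):=\frac{\widetilde H(x,u,\xi)}{1+n^{-1}|\widetilde H(x,u,\xi)|},
$$
which is bounded for each fixed $n$, and solve $-\Delta u_n+\widetilde H_n(x,u_n,\nabla u_n)=f$ by a standard Schauder--Leray fixed point argument applied to the map $v\mapsto u$ where $-\Delta u=f-\widetilde H_n(x,v,\nabla v)$.

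The next step is to derive a uniform $H^1_0(\Omega)\cap L^\infty(\Omega)$ bound on $\{u_n\}$. Testing the approximate equation with $\varphi(u_n):=u_n\,e^{\gamma u_n^2}$ for $\gamma>0$ sufficiently large, the term $\varphi'(u_n)|\nabla u_n|^2$ absorbs $b(M)|\nabla u_n|^2\,|\varphi(u_n)|$ and yields the desired a priori estimate (the $L^\infty$ bound follows by a Stampacchia-type iteration exploiting the natural growth). With this bound in hand, the Boccardo--Murat strong compactness lemma yields, along a subsequence, $\nabla u_n\to\nabla u$ a.e.\ in $\Omega$; passing to the limit in $\widetilde H_n$ via Vitali's theorem produces a function $u\in H^1_0(\Omega)\cap L^\infty(\Omega)$ solving $-\Delta u+\widetilde H(x,u,\nabla u)=f$.

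The concluding and most delicate step is to show $\alpha\le u\le\beta$ a.e., for only then does $\widetilde H(x,u,\nabla u)=H(x,u,\nabla u)$, so that $u$ solves \eqref{bo-Mu-Pu}. For the upper bound, I would test the difference between the equation for $u$ and the distributional inequality for $\beta$ with $v:=\varphi((u-\beta)^+)$, $\varphi(s):=s\,e^{\lambda s}$, and $\lambda=\lambda(b(M))$ sufficiently large. On the set $\{u>\beta\}$ one has $Tu=\beta$, so the nonlinear contribution collapses to $H(x,\beta,\nabla u)-H(x,\beta,\nabla\beta)$; bounding this by the natural growth and splitting $|\nabla u|^2\le 2|\nabla(u-\beta)^+|^2+2|\nabla\beta|^2$, the choice of $\lambda$ makes the Dirichlet term $\varphi'((u-\beta)^+)|\nabla(u-\beta)^+|^2$ dominate, leaving an integral inequality that forces $(u-\beta)^+\equiv 0$. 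The lower bound is obtained symmetrically with $v=\varphi((\alpha-u)^+)$. I expect this confinement step to be the main obstacle: one has to calibrate $\lambda$ against $b(M)$ using only the natural growth (without any monotonicity or Lipschitz hypothesis on $H$), and the boundary trace of $(u-\beta)^+$ must be handled through the assumption $\beta^-\in H^1_0(\Omega)$ (respectively $\alpha^+\in H^1_0(\Omega)$) that encodes the sign of $\beta$ (respectively $\alpha$) on $\partial\Omega$.
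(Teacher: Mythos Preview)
The paper does not prove Theorem~\ref{sousBo}; it is quoted from \cite{BoMuPu4} as a preliminary tool, so there is no ``paper's own proof'' to compare against. Your outline is a faithful sketch of the Boccardo--Murat--Puel strategy (truncation in $u$ via $T$, approximation by bounded nonlinearities, exponential test functions, confinement to $[\alpha,\beta]$), and for data $f,k\in L^p(\Omega)$ with $p>N/2$---which is the only regime in which the paper actually invokes the theorem---it would go through.

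That said, there is a genuine gap relative to the statement as written, which assumes only $f,k\in L^1(\Omega)$. Your uniform $L^\infty$ bound on the approximations $u_n$ ``by a Stampacchia-type iteration exploiting the natural growth'' cannot be obtained from $L^1$ data: De~Giorgi/Stampacchia iteration needs at least $L^{N/2+\varepsilon}$ integrability of the zero-order terms. In the $L^1$ framework the only source of an $L^\infty$ bound is the order interval $[\alpha,\beta]$ itself, so the confinement must be carried out \emph{before} (or simultaneously with) the passage to the limit, not after. Concretely, one truncates the gradient as well (or adds a penalisation term) so that $\alpha,\beta$ remain sub/supersolutions of the approximate problems, obtains $\alpha\le u_n\le\beta$ at level $n$, and only then uses the test function $u_n e^{\gamma u_n^2}$, which is now admissible because $\|u_n\|_\infty\le M$. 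Without this reordering, neither the exponential test function nor the Stampacchia step is justified. Your fixed-point step is also slightly loose: freezing both $v$ and $\nabla v$ in the map $v\mapsto u$ does not give a compact operator on $H^1_0$; the standard route at this stage is the theory of pseudo-monotone (Leray--Lions) operators rather than Schauder.
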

\smallskip

We  also need a continuation theorem. Let $E$ be a real Banach space with norm $||\cdot||_E$  and $T:\R \times E \to E$ a  completely continuous map, i.e. it is continuous and maps bounded sets to relatively compact sets. For $\lambda \in \R$, we consider the problem of finding the zeroes of $\Phi(\lambda, u) := u - T(\lambda, u)$, i.e.
\begin{equation*}
\Phi(\lambda, u) = u - T(\lambda, u) = 0, \quad u \in E \eqno{(Q_{\lambda})},
\end{equation*}
and we define
$$
\Sigma = \{ (\lambda, u) \in \R \times E : \Phi(\lambda, u) = 0 \}.
$$ 
Let $\lambda_0 \in \R$ be arbitrary but fixed and for $v \in E$ and $r>0$ let $B(v,r):= \{u \in E : ||v- u|| <r \}$. 

If we assume that $u_{\lambda_0}$ is an isolated solution of  $(Q_{\lambda_0})$,  then the Leray-Schauder degree $\mbox{deg}(\Phi(\lambda_0,\cdot), B(u_{\lambda_0},r), 0)$ is well defined  and is constant for $r>0$ small enough. Thus it is possible to define the index
$$
i(\Phi(\lambda_0, \cdot), u_{\lambda_0}) := \lim_{r \to 0} \mbox{deg}(\Phi(\lambda_0, \cdot), B(u_{\lambda_0},r), 0).
$$


\begin{thm} 
\label{1noacot}
If $(Q_{\lambda_0})$ has a unique solution $u_{\lambda_0}$ and $i(\Phi(\lambda_0, \cdot), u_{\lambda_0}) \neq 0$ then $\Sigma$ possesses two unbounded components $C^+$, $C^-$ in $[\lambda_0, + \infty[ \times E$ and $]- \infty, \lambda_0] \times E$ respectively which meet at $(\lambda_0, u_{\lambda_0})$.
\end{thm}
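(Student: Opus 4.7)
The plan is to apply Leray--Schauder degree theory together with a topological separation lemma (Whyburn's lemma), in the spirit of Rabinowitz's global continuation framework. Let $S^+ := \Sigma \cap ([\lambda_0, +\infty[ \times E)$ and let $C^+$ be the connected component of $S^+$ containing $(\lambda_0, u_{\lambda_0})$; define $S^-$ and $C^-$ symmetrically on the other side. Uniqueness of the solution $u_{\lambda_0}$ at the level $\lambda_0$ ensures that $C^+ \cap C^- = \{(\lambda_0, u_{\lambda_0})\}$, so once unboundedness is established on each side the two components automatically meet at that point.

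I focus on showing that $C^+$ is unbounded and argue by contradiction. Suppose $C^+$ is bounded. Since $T$ is completely continuous, $\Sigma$ is closed in $\R \times E$ and bounded subsets of $\Sigma$ are relatively compact, so $C^+$ is in fact compact. Pick $R>0$ with $C^+ \subset [\lambda_0, R[ \times B(0,R)$ and set $K := S^+ \cap ([\lambda_0, R] \times \overline{B(0,R)})$, which is compact. Applying Whyburn's lemma to $K$, with $C^+$ as one distinguished closed set and the union of the remaining components of $K$ meeting the boundary as the other, one separates $C^+$ from the rest of $K$. From this separation I build a bounded open set $O \subset [\lambda_0, +\infty[ \times E$ with $C^+ \subset O$ and $\partial O \cap \Sigma \subset \{(\lambda_0, u_{\lambda_0})\}$.

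With this isolating neighborhood in hand, set $O_\lambda := \{u \in E : (\lambda, u) \in O\}$. For $\lambda > \lambda_0$ the map $u \mapsto \Phi(\lambda, u)$ has no zeros on $\partial O_\lambda$, so homotopy invariance of the Leray--Schauder degree yields that $\mathrm{deg}(\Phi(\lambda, \cdot), O_\lambda, 0)$ is constant on $]\lambda_0, +\infty[$. Taking $\lambda$ larger than the $\lambda$-projection of $C^+$ forces $O_\lambda = \emptyset$ and hence this constant equals $0$. On the other hand, as $\lambda \downarrow \lambda_0$, continuity of $T$ combined with the uniqueness of $u_{\lambda_0}$ and the excision property of degree force the degree on $O_\lambda$ to coincide with the index $i(\Phi(\lambda_0,\cdot), u_{\lambda_0}) \neq 0$, a contradiction. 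The symmetric argument shows that $C^-$ is unbounded.

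The step I expect to require the most care is the Whyburn--type separation: verifying that $C^+$ is genuinely a connected component (and not merely a connected subset) of the compact set $K$, and constructing the isolating open set $O$ whose boundary avoids $\Sigma$ apart from the prescribed point. Once this topological isolation is secured, the degree--theoretic contradiction is essentially automatic, so the crux of the proof lies entirely in that separation argument rather than in any analytic estimate.
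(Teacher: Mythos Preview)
The paper does not give its own proof of this theorem; it simply remarks that the result is essentially Theorem~3.2 of Rabinowitz \cite{Ra} (and ultimately due to Leray--Schauder \cite{LeSc}). Your sketch reproduces precisely the standard Rabinowitz argument---Whyburn-type separation to build an isolating neighborhood, followed by homotopy invariance of the Leray--Schauder degree to reach a contradiction---so your approach is exactly the one underlying the cited reference, and your identification of the separation step as the delicate point is accurate.
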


Theorem \ref{1noacot}
is essentially Theorem 3.2 of \cite{Ra} (stated assuming that $\lambda_0=0$). In turn this result is essentially due to Leray and Schauder \cite{LeSc}. For an exposition of the main properties of the Leray-Schauder degree see, for example, \cite{AmAr}.
\vspace{3mm}

\section{Some existence results} 
\label{Section02}

In this section we establish some existence results for the boundary value problem
\begin{equation}
\label{31}
- \Delta u = d(x)u+ \mu(x) |\nabla u|^2 + h(x), \quad u \in X
\end{equation}
under the assumption that
$$
\leqno{\mathbf{(A3)}} \hspace{1cm}
\left\{ \begin{array}{c} d \mbox{ and } h \mbox{ belong to }  L^p(\Omega) \quad \mbox{for some } p > \frac{N}{2},
\\[2mm]
   \mu(x)\equiv \mu>0 \mbox{ is a constant},
\\[2mm]
   d \leq 0\mbox{ and } h \geq 0.  \end{array}
\right.
$$
If $\mbox{meas}(\Omega \backslash \mbox{Supp} \, d) > 0$ we also set
$$
W_d = \{ w \in H^1_0(\Omega) : d(x) w(x) = 0, \, a.e. \, x \in \Omega \}
$$
and we impose condition $({\rm Hc})$ for $c=d$, i.e., we require 
$$
\leqno{\mathbf{(H)}} \hspace{1cm}
  m_2 := \displaystyle \inf_{ \{ u \in W_d,\, ||u||=1  \}} \, \int_{\Omega} (|\nabla u|^2 - \mu h(x) u^2)\, dx >0.
$$

\begin{prop}
\label{prop1}
Assume  {\rm (A3)} and, if $\mbox{\rm meas}(\Omega \backslash \mbox{\rm Supp} \, d) > 0$,  also that {\rm (H)} holds.
Then {\rm(\ref{31})} has a non negative solution.
\end{prop}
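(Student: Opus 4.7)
The plan is to reduce \eqref{31} to a semilinear problem via the Cole--Hopf-type substitution $v := (e^{\mu u}-1)/\mu$, available because $\mu$ is a positive constant. This bijects non-negative $u\in X$ onto non-negative $v\in X$ and transforms \eqref{31} into
\[
-\Delta v = (1+\mu v)\!\left[\tfrac{d(x)}{\mu}\log(1+\mu v) + h(x)\right] \quad\text{in } \Omega, \qquad v\in X,\ v\geq 0.
\]
A non-negative solution $v$ of this transformed problem yields, via $u:=\tfrac{1}{\mu}\log(1+\mu v)$, the desired non-negative solution of \eqref{31}.

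I would find $v$ as a minimizer on $H$ of the functional $J(v):=\tfrac12\|v\|^2 - \int_\Omega G(x,v)\,dx$, where $G(x,\cdot)$ is the $C^1$ primitive whose derivative equals $(1+\mu t)[\tfrac{d(x)}{\mu}\log(1+\mu t)+h(x)]$ for $t\geq 0$ and $h(x)$ for $t\leq 0$. A direct integration shows that for $t\geq 0$,
\[
G(x,t) = \tfrac{d(x)}{\mu^2}\phi(t) + h(x)\!\left[t+\tfrac{\mu}{2}t^2\right], \quad \phi(t) := \tfrac12(1+\mu t)^2\log(1+\mu t) - \tfrac14[(1+\mu t)^2 - 1].
\]
Two features drive the proof: (i) $\phi\geq 0$ on $[0,\infty)$, so since $d\leq 0$ the term $-\int\tfrac{d(x)}{\mu^2}\phi(v^+)\,dx$ is non-negative; (ii) $\phi$ is \emph{strictly} super-quadratic, namely $\phi(t)/t^2\to+\infty$ as $t\to+\infty$.

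The crux is to prove that $J$ is coercive on $H$. Arguing by contradiction, assume $\|v_n\|\to\infty$ while $J(v_n)$ stays bounded, and set $\tilde v_n := v_n/\|v_n\|$. Passing to a subsequence one has $\tilde v_n\rightharpoonup\tilde v$ in $H$ and $\tilde v_n\to\tilde v$ strongly in $L^{2p'}(\Omega)$ (valid since $p>N/2$ gives $2p'<2^*$). Dividing $J(v_n)$ by $\|v_n\|^2$ leads to
\[
\frac{J(v_n)}{\|v_n\|^2}\; \geq\; \tfrac12 + \int_\Omega \tfrac{-d(x)}{\mu^2}\tfrac{\phi(v_n^+)}{\|v_n\|^2}\,dx - \tfrac{\mu}{2}\int_\Omega h(x)\,\tilde v_n^2\,dx + o(1).
\]
If $\tilde v\in W_d$, condition {\rm (H)} gives $\mu\int h\tilde v^2 \leq (1-m_2)\|\tilde v\|^2\leq 1-m_2$, hence $\liminf J(v_n)/\|v_n\|^2\geq m_2/2>0$. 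If instead $\tilde v\notin W_d$, then $\tilde v>0$ on a positive-measure subset of $\{d<0\}$, where $v_n^+\to+\infty$ a.e.; the super-quadratic growth of $\phi$ together with Fatou's lemma then forces the middle integral to $+\infty$. Either case contradicts the boundedness of $J(v_n)$.

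Weak lower semicontinuity of $J$ is standard (the quadratic part is wlsc; the $h$-dependent terms pass to the limit under the compact embedding $H\hookrightarrow L^{2p'}$; the $\phi$-term is wlsc by Fatou on a non-negative integrand), so $J$ attains a minimum at some $v_0\in H$. Testing the Euler--Lagrange equation with $v_0^-$ and using that the nonlinearity equals $h\geq 0$ for negative arguments forces $\int|\nabla v_0^-|^2+\int h\, v_0^-=0$, hence $v_0\geq 0$. Stampacchia-type elliptic regularity, using $d,h\in L^p$ with $p>N/2$ together with the at-most-logarithmic growth in $v_0$ of the right-hand side, yields $v_0\in L^\infty$, so $v_0\in X$. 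Undoing the change of variable produces the desired non-negative solution of \eqref{31}. The main obstacle is the coercivity in the transverse case $\tilde v\notin W_d$: {\rm (H)} controls $\|\cdot\|^2-\mu\int h\cdot^2$ only on $W_d$, and it is essential to combine the non-positivity of $d$ on $\{d<0\}$ with the \emph{strict} super-quadraticity of $\phi$ to recover coercivity outside $W_d$.
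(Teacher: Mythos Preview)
Your plan coincides with the paper's: perform the Cole--Hopf change $v=(e^{\mu u}-1)/\mu$, minimize the resulting energy on $H$, recover regularity, and transform back. The only structural difference is the extension of the nonlinearity to $\{t<0\}$: the paper takes $g$ odd (so the primitive $G$ is even), whereas you set it equal to $h(x)$ for $t\le 0$. Your way of obtaining $v_0\ge 0$ by testing with $v_0^-$, and your regularity argument, are both fine alternatives to the paper's $I(|v|)\le I(v)$ and to Lemma~\ref{dual1}.

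There is, however, a real gap in the coercivity step. The assertion ``if $\tilde v\notin W_d$, then $\tilde v>0$ on a positive-measure subset of $\{d<0\}$'' is false: $\tilde v\notin W_d$ only says $d(x)\tilde v(x)\ne 0$ on a set of positive measure, and $\tilde v$ may well be \emph{negative} there. On such a set $v_n^+\to 0$ a.e., so $\phi(v_n^+)/\|v_n\|^2\to 0$ and the Fatou argument yields nothing; and you cannot invoke {\rm(H)} either, since $\tilde v\notin W_d$. The repair is easy once noticed. Since the quadratic part of $G$ involves only $v_n^+$, keep the sharper form
\[
\frac{J(v_n)}{\|v_n\|^2}\ \ge\ \tfrac12 \;+\; \int_\Omega\frac{-d}{\mu^2}\,\frac{\phi(v_n^+)}{\|v_n\|^2}\,dx \;-\; \tfrac{\mu}{2}\int_\Omega h\,(\tilde v_n^+)^2\,dx \;+\; o(1),
\]
and split cases on $\tilde v^+$ rather than on $\tilde v$. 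If $\tilde v^+\in W_d$ (equivalently $\tilde v\le 0$ a.e.\ on $\{d<0\}$), then {\rm(H)} applied to $\tilde v^+$ gives $\mu\int_\Omega h(\tilde v^+)^2\le(1-m_2)\|\tilde v^+\|^2\le 1-m_2$ and you conclude as before. If $\tilde v^+\notin W_d$, then indeed $\tilde v>0$ on a positive-measure subset of $\{d<0\}$ and your super-quadraticity/Fatou argument goes through. The paper sidesteps this asymmetry altogether: with $G$ even, the super-quadratic term is $G(v_n)$ (not $\phi(v_n^+)$), so it fires whenever $|\tilde v|>0$ on $\{d<0\}$, and the dichotomy on $\tilde v$ is then correct.
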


\begin{remark}\label{pos}
Observe that, under condition {\rm (A3)}, every solution $u$ of {\rm(\ref{31})} is non negative. In fact, using $u^-$ as test function we obtain, as $d\leq 0$, $\mu> 0$ and $h\geq 0$,
$$
0\geq  -\int_\Omega |\nabla u^-|^2 + \int_\Omega d(x) |u^-|^2 = \int_\Omega \left[ \mu |\nabla u|^2 + h(x)\right] u^{-} \geq 0,
$$
which implies that $u^-=0$ i.e. $u\geq 0$.
\end{remark}

\begin{remark}
\label{ktL1} 
Assume, in addition to {\rm (A3)}, that   there exists an open subset $O(d)$ in $\Omega$ with $C^1$ boundary $\partial O(d)$  such that 
	$d(x)=0$ a.e. in $\overline{O(d)}$ and $d(x)<0$ a.e. in $ \Omega\setminus O(d)$.
  Then  \eqref{31} has a solution if and only if {\rm (H)} holds.   See Remark \ref{ktL2} below for 
a proof.
\end{remark}

To prove Proposition \ref{prop1} we introduce the boundary value problem 
\begin{equation}
\label{3}
- \Delta v - \mu h(x) v = d(x) g(v) + h(x),\quad  v \in H 
\end{equation}
where
\begin{equation}
\label{defg}
\begin{array}{ll}
g(s) = 
\left\{
\begin{array}{ll} 
\text{$\frac{1}{\mu} (1+ \mu s) \ln (1+ \mu s)$},   &\mbox{if } \quad s \geq 0,
\\
\text{$-\frac{1}{\mu} (1-\mu s) \ln (1- \mu s)$},   &\mbox{if } \quad s < 0.
\end{array}
\right.
\end{array}
\end{equation}
Let us denote 
$$
G(s) =\int_0^s g(\xi)\,d\xi = \left\{ \begin{array}{ll}
               \displaystyle \frac{{(1+\mu s)^2}}{4 \mu^{2}}  [2\ln (1+\mu s) -1] {+ \frac{1}{4 \mu^2}} & \mbox{ if } s\geq 0,
               \\
               \\
               G(-s), &  \mbox{ if } s< 0.
               \end{array}
               \right.
               $$

The properties of $g$ that are useful to us are gathered in the following lemma.

\begin{lem}  
\label{prop-g1}
\vskip2pt \noindent
\begin{itemize}
\item[(i)] The function $g$ is odd and continuous on $\R$.
\item[(ii)] $g(s)s >0$ for  $s \in \R \setminus \{0\}$, 
$G(s) \geq 0$ on $\R$.
\item[(iii)] For any $r\in\,]0,1[$, there exists $C=C(r,\mu) >0$  such that, for all $|s| > \frac{1}{\mu}$,  we have $|g(s)| \leq C |s|^{1+r}$.
\item[(iv)] We have  $G(s)/s^2 \to + \infty$ as $|s| \to  \infty$. \qed
\end{itemize}
\end{lem}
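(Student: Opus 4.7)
The plan is to verify each of the four properties by direct computation from the closed-form expressions for $g$ and $G$. Since the definition splits at $s=0$ and the proof is essentially elementary, the emphasis should be on (iii) and (iv), where a small but careful growth estimate is needed.

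\textbf{Properties (i) and (ii).} For $s>0$, a direct substitution gives
$g(-s) = -\frac{1}{\mu}(1+\mu s)\ln(1+\mu s) = -g(s)$, so $g$ is odd. Continuity on $]0,\infty[$ and $]-\infty,0[$ is immediate, and continuity at $0$ follows from $(1+\mu s)\ln(1+\mu s)\to 0$ as $s\to 0^+$, together with oddness. For (ii), if $s>0$ then $1+\mu s>1$ so $\ln(1+\mu s)>0$ and hence $g(s)>0$; by oddness $g(s)s>0$ for all $s\neq 0$. Since $g$ is odd, $G$ is even ($G(-s)=G(s)$), so $G(s)\ge 0$ on $\R$ will follow once I observe that $G(0)=0$ and $G'(s)=g(s)\ge 0$ for $s\ge 0$, so $G$ is nondecreasing on $[0,\infty[$.

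\textbf{Property (iii).} The strategy is the standard fact that for every $r\in\,]0,1[$ there exists a constant $K_r>0$ with $\ln t \le K_r\, t^r$ for all $t\ge 1$. Applying this with $t=1+\mu s$ for $s>1/\mu$, and using that $1+\mu s < 2\mu s$ in that range, yields
$$
|g(s)| \;=\; \frac{1}{\mu}(1+\mu s)\ln(1+\mu s) \;\le\; \frac{K_r}{\mu}(1+\mu s)^{1+r} \;\le\; \frac{K_r}{\mu}(2\mu)^{1+r} s^{1+r},
$$
which gives the desired inequality with $C(r,\mu):=\frac{K_r}{\mu}(2\mu)^{1+r}$. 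The case $s<-1/\mu$ follows from the oddness of $g$ proved in (i).

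\textbf{Property (iv).} For $s>0$ the explicit formula yields
$$
\frac{G(s)}{s^2} \;=\; \frac{(1+\mu s)^2}{4\mu^2 s^2}\bigl[2\ln(1+\mu s)-1\bigr] + \frac{1}{4\mu^2 s^2}.
$$
As $s\to+\infty$ the prefactor $(1+\mu s)^2/(4\mu^2 s^2)$ tends to $1/4$, while $2\ln(1+\mu s)-1\to+\infty$, so $G(s)/s^2\to+\infty$. Since $G$ is even, the same limit holds as $s\to-\infty$, which completes the proof.

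No step presents a genuine obstacle; the only mildly delicate point is making sure in (iii) that the logarithmic factor is absorbed by an arbitrarily small power of $s$ with a constant that is uniform on $\{|s|>1/\mu\}$, which is handled by the elementary inequality $\ln t\le K_r t^r$ on $[1,\infty[$.
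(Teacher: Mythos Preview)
Your proof is correct; the paper itself gives no proof of this lemma (it is stated with a \qed\ symbol, indicating the verification is routine), and your direct computations supply exactly the elementary details one would expect.
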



The idea of modifying the problem to obtain  problem (\ref{3}) is not new. It appears already in \cite{KaKr}  in another context. It permits to obtain a non negative solution  of~\eqref{31}.

\begin{lem}
\label{dual1}
Assume that {\rm (A3)} hold.
\begin{enumerate}
\item[i)] Any solution of \eqref{3} belongs to $W^{2,p}(\Omega)$ and thus to $L^{\infty}(\Omega)$;
\item[ii)] If $v \in H$ is a non negative solution of \eqref{3} then $u = (1/ \mu) \ln (1 + \mu v)\in X$ is a (non negative) solution of
\eqref{31}.
\end{enumerate}
\end{lem}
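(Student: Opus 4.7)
The plan is to treat (i) by a standard elliptic bootstrap exploiting the subcritical growth of $g$ recorded in Lemma~\ref{prop-g1}(iii), and (ii) by a direct change-of-variable computation justified by the regularity obtained in (i).

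For (i), I would rewrite \eqref{3} as $-\Delta v = f$ with
\begin{equation*}
f(x) := \mu h(x) v + d(x) g(v) + h(x),
\end{equation*}
and bootstrap starting from $v\in H^{1}_{0}(\Omega)\subset L^{2^{*}}(\Omega)$. Given $v\in L^{q}(\Omega)$ with $q<\infty$, fix $r\in\,]0,1[$: Lemma~\ref{prop-g1}(iii) combined with the continuity of $g$ on $[-1/\mu,1/\mu]$ gives $|g(v)|\leq C(1+|v|^{1+r})$, so by H\"older with $h,d\in L^{p}(\Omega)$ one obtains $f\in L^{s}(\Omega)$ with $\tfrac{1}{s}\leq\tfrac{1+r}{q}+\tfrac{1}{p}$. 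The Calder\'on--Zygmund $W^{2,s}$-estimate followed by the Sobolev embedding then yields $v\in L^{q'}(\Omega)$ with $\tfrac{1}{q'}\leq (1+r)\tfrac{1}{q}+\tfrac{1}{p}-\tfrac{2}{N}$. Since $\delta:=\tfrac{2}{N}-\tfrac{1}{p}>0$ and $r$ is free, choosing $r$ small enough at each step guarantees a uniform positive decrement $\tfrac{1}{q'}\leq\tfrac{1}{q}-\tfrac{\delta}{2}$. Hence after finitely many iterations the integrability exponent of $f$ exceeds $N/2$, so that $W^{2,s}\hookrightarrow L^{\infty}$ gives $v\in L^{\infty}(\Omega)$. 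A last application of $W^{2,p}$-regularity, now with $f\in L^{p}(\Omega)$ since $v$ and $g(v)$ are bounded, yields $v\in W^{2,p}(\Omega)$.

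For (ii), with $v\geq 0$ in $W^{2,p}(\Omega)\cap H^{1}_{0}(\Omega)$, set $u:=\tfrac{1}{\mu}\ln(1+\mu v)$; the elementary bound $0\leq u\leq v$ ensures $u\in H^{1}_{0}(\Omega)\cap L^{\infty}(\Omega)=X$ and $u\geq 0$. The chain rule, valid at $W^{2,p}$ level, gives $\nabla u=\nabla v/(1+\mu v)$ and
\begin{equation*}
-\Delta u=\frac{-\Delta v}{1+\mu v}+\mu|\nabla u|^{2}.
\end{equation*}
For $v\geq 0$ the definition \eqref{defg} of $g$ delivers the key identity $g(v)=u(1+\mu v)$, so that \eqref{3} rewrites as $-\Delta v=(1+\mu v)\bigl(d(x)u+h(x)\bigr)$; dividing by the strictly positive factor $1+\mu v$ and inserting into the expression for $-\Delta u$ produces exactly $-\Delta u=d(x)u+\mu|\nabla u|^{2}+h(x)$, i.e.\ \eqref{31}.

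The main technical point is the exponent bookkeeping in the bootstrap of (i): one has to verify that the loss $r/q$ introduced by using the subcritical exponent $1+r$ can be dominated by the gain $\delta=\tfrac{2}{N}-\tfrac{1}{p}>0$, which is precisely what the freedom to choose $r$ arbitrarily small in Lemma~\ref{prop-g1}(iii) provides. Once this is arranged, part (ii) reduces to a mechanical verification.
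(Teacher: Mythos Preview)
Your argument is correct in both parts, but the route differs from the paper's in each.

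For (i), the paper avoids any iteration: instead of bootstrapping on $f=\mu h v+d\,g(v)+h$, it factors the equation as
\[
-\Delta v=\Bigl[\mu h(x)+d(x)\,\tfrac{g(v)}{v}\Bigr]v+h(x)
\]
and uses the sharper information $|g(s)/s|\leq C|s|^{r}$ from Lemma~\ref{prop-g1}(iii). With $v\in L^{2^*}$ and $r$ chosen small (explicitly $r<\tfrac{4p-2N}{p(N-2)}$), H\"older gives directly $\mu h+d\,g(v)/v\in L^{p_1}$ for some $p_1>N/2$, and a single appeal to \cite[Theorem~III-14.1]{LU68} yields the full regularity. Your bootstrap works equally well and is more robust (it would handle genuinely polynomial nonlinearities), but here the slow logarithmic growth of $g$ makes the one-shot argument available and shorter.

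For (ii), the paper stays at the weak level: given $\phi\in C_0^\infty(\Omega)$ it takes $\psi=\phi/(1+\mu v)$ as test function in \eqref{3} and checks by direct computation that $\int\nabla v\cdot\nabla\psi=\int(\nabla u\cdot\nabla\phi-\mu|\nabla u|^2\phi)$, which yields the weak formulation of \eqref{31} using only $v\in H^1_0\cap L^\infty$. Your pointwise chain-rule computation is cleaner once $v\in W^{2,p}$ is in hand (and the product $|\nabla v|^2\in L^p$ that it implicitly uses does follow from $W^{2,p}\hookrightarrow W^{1,2p}$ when $p>N/2$), but it leans on the full strength of (i), whereas the paper's test-function trick would go through with only $L^\infty$ regularity on $v$.
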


\begin{proof}
%
i) Let $v \in H$ be a solution of~\eqref{3}, that we write as
$$ 
- \Delta v = \left[\mu h(x) + d(x) \frac{g(v)}{v} \right] v +  h(x), \quad v\in H.
$$
By classical arguments, see for example  \cite[Theorem III-14.1]{LU68}, as $\partial \Omega$ satisfies the condition (A) of \cite{LU68},
 the first part of the lemma will be proved if we can show that
$$
\left[\mu h(x) + d(x) \frac{g(v)}{v} \right] \in L^{p_1}(\Omega) \quad \mbox{with } p_1>N/2.
$$
But by assumption $d$ and $\mu h$ belong to
$L^{p}(\Omega)$, for some $p>N/2$ and we shall prove that the term $d(x) \frac{g(v)}{v}$ has the same property.
This is the case
because of the slow growth of $g(s)/s$ as $|s| \to \infty$, see Lemma \ref{prop-g1}-(iii). Specifically, for any $r\in \,]0,1[$, there
exists a $C >0$ such that, for all $|s|> \frac{1}{\mu}$,
$$
\left|g(s)/s \right| \leq C |s|^r.
$$
Thus, since $d \in L^p(\Omega)$ with $p > \frac{N}{2}$ and  $v\in L^{\frac{2N}{N-2}}(\Omega)$,  taking
$r >0$ sufficiently small (for example $r <
\frac{4p-2N}{p(N-2)}$) we see, using  H\"older inequality, that  $d(x) g(v)/v \in L^{p_1}(\Omega)$, for some $p_1 > N/2$. This ends the proof of Point i). 
\medskip

ii) Since $v\geq 0$ the problem \eqref{3} can be rewritten as
\begin{equation}
\label{2.12}
- \Delta v = \frac{d(x)}{\mu}(1 + \mu v) \ln (1 + \mu v) + (1 + \mu v) h(x), \qquad v\in H.
\end{equation}
Let $v \in H$ be a non negative solution of \eqref{2.12}, we want to show that
$ u = \frac{1}{\mu}\ln(1+ \mu v)$ is a  solution
of~\eqref{31}, namely that, for $\phi \in
C_0^{\infty}(\Omega)$, 
\begin{equation}
\label{2.13}
\int_{\Omega} \left(\nabla u \nabla \phi - \mu |\nabla u|^2 \phi -
d(x) u \phi\right)  dx = \int_{\Omega} h(x) \phi \thinspace
dx.
\end{equation}
First observe that, as $v\in L^{\infty}(\Omega)$ and satisfies $v\geq 0$  in $\Omega$ we have $u\in X$.
Let $\displaystyle \psi = \frac{\phi}{1+ \mu v}.$ Clearly $\psi
\in H$ and thus it can be used as test function in 
\eqref{2.12}. Hence, we get
\begin{equation}
\label{2.15}
\begin{array}{rcl}
\displaystyle
\int_{\Omega} \nabla v \nabla \psi \thinspace dx 
&=&
\displaystyle 
\int_{\Omega}
\frac{d(x)}{\mu} \ln(1+ \mu v) \phi \thinspace dx +
\int_{\Omega}h(x) \phi \thinspace dx
\\[3mm]
&=&
\displaystyle
\int_{\Omega}d(x) u \phi \thinspace dx
+
\int_{\Omega}h(x) \phi \thinspace dx.
\end{array}
\end{equation}
Moreover, we have
\begin{eqnarray}
\nonumber 
\int_{\Omega} \nabla v \nabla \psi \thinspace dx  
& = &
\int_{\Omega}\nabla \left( \frac{1}{\mu}(e^{\mu u}-1)\right)\nabla \left(\frac{\phi}{1+ \mu v}\right)dx 
\\
\nonumber
& = &     
\int_{\Omega} e^{\mu u} \nabla u \left( \frac{\nabla \phi}{1+ \mu v}-
\frac{\mu \phi \nabla v }{(1+ \mu v)^2}\right) dx 
\\
\nonumber 
& = & \int_{\Omega} \nabla u \left( \nabla \phi -
\frac{\mu \phi \nabla
(\frac{1}{\mu}(e^{\mu u}-1))}{(1+ \mu v)} \right)dx 
\\
& =&  \int_{\Omega }\nabla u (\nabla \phi - \mu \phi
\nabla u ) \thinspace dx \nonumber 
\\ 
& =&  \int_{\Omega} \left(\nabla u \nabla \phi -
\mu |\nabla u|^2 \phi\right) dx. \nonumber
\end{eqnarray}
Combining this equality with \eqref{2.15}  we see that $u$
satisfies \eqref{2.13}. This ends the proof of Point ii).
\end{proof}

In order to find a solution of (\ref{3}) we shall look to a critical point of the functional $I$ defined on $H$ by
$$
I(v) = \frac{1}{2} \int_{\Omega} (|\nabla v |^2 - \mu h(x) v^2)  \thinspace dx 
- \int_{\Omega} d(x) G(v)  \thinspace dx 
- \int_{\Omega}h(x) v \thinspace dx.
$$

As $g$ has a subcritical growth at infinity, see Lemma \ref{prop-g1}(iii), it is standard to show that $I \in C^1(H, \R)$ and that a critical point of $I$ corresponds to a solution in $H$ of (\ref{3}).  To obtain a critical point of $I$ we shall prove the existence of a global minimum of $I$. We define
\begin{equation}
\label{min}
m:= \inf_{u \in H} I(u) \in \R \cup \{- \infty\}.
\end{equation}

\begin{lem}
\label{minbounded}
Assume  {\rm (A3)} and, if $\mbox{\rm meas}(\Omega \backslash \mbox{\rm Supp} \, d) > 0$, assume also that {\rm (H)} holds. Then the infimum $m$ defined by  \eqref{min} is finite and it
 is reached by a non negative function  in $H$. Consequently, \eqref{3} has a non negative solution.
\end{lem}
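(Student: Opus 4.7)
The plan is to obtain a non-negative minimizer of $I$ on $H$ by the direct method of the calculus of variations. Coercivity, which is the delicate step, will use the superquadratic growth of $G$ (Lemma~\ref{prop-g1}(iv)) together with condition~$\mathbf{(H)}$, while weak lower semicontinuity will follow from compact Sobolev embeddings valid since $p>N/2$. The non-negativity of the minimizer will then come from the evenness of $G$ (Lemma~\ref{prop-g1}(i)) together with $h\geq 0$, and the fact that a critical point of $I$ is a weak solution of~\eqref{3} is already explained just before the statement.

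For coercivity, suppose by contradiction that $v_n \in H$ satisfies $t_n := \|v_n\| \to \infty$ while $I(v_n)$ stays bounded above. Set $w_n := v_n / t_n$, so $\|w_n\|=1$ and, up to a subsequence, $w_n \rightharpoonup w$ in $H$, $w_n \to w$ in $L^q(\Omega)$ for every $q < 2^*$, and $w_n \to w$ a.e. Since $p>N/2$ gives $2p' < 2^*$, one has $\int_\Omega h w_n^2\,dx \to \int_\Omega h w^2\,dx$ and $\frac{1}{t_n}\int_\Omega h w_n\,dx \to 0$. Dividing $I(v_n)$ by $t_n^2$,
\[
o(1) \;=\; \tfrac{1}{2}\Bigl(1-\mu\!\int_\Omega\! h w_n^2\,dx\Bigr) \;+\; \int_\Omega (-d(x))\,\tfrac{G(t_n w_n)}{t_n^2}\,dx \;-\; \tfrac{1}{t_n}\!\int_\Omega\! h w_n\,dx.
\]
Since $-d \geq 0$ and $G \geq 0$, the middle integrand is non-negative and, by Lemma~\ref{prop-g1}(iv), it tends pointwise to $+\infty$ on $\{w\neq 0\}\cap\{d<0\}$; Fatou's lemma therefore forces this set to have zero measure, i.e.\ $w \in W_d$, and leaves $\mu\int_\Omega h w^2\,dx \geq 1$. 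If $\mbox{meas}(\Omega\setminus\mbox{Supp}\,d)>0$, applying $\mathbf{(H)}$ to $w\in W_d$ gives $\mu\int_\Omega h w^2\,dx \leq (1-m_2)\|w\|^2 \leq 1-m_2 < 1$, a contradiction; if $\mbox{meas}(\Omega\setminus\mbox{Supp}\,d)=0$, then $W_d=\{0\}$, hence $w=0$ and $\mu\int_\Omega h w^2\,dx=0<1$, again a contradiction. Thus $I(v_n)\to+\infty$, so $m>-\infty$.

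For the existence of a minimizer, I verify that $I$ is sequentially weakly lower semicontinuous on $H$: the Dirichlet term is weakly l.s.c., and choosing $r\in\,]0,1[$ small enough that $(2+r)p'<2^*$, the growth bound of Lemma~\ref{prop-g1}(iii) combined with the compact embedding $H\hookrightarrow L^{(2+r)p'}$ makes the remaining three integrals sequentially continuous on $H$ for the weak topology. Any minimizing sequence is bounded by coercivity, and its weak limit is a minimizer $v\in H$. Since $G$ is even and $h\geq 0$,
\[
I(|v|) - I(v) = -\int_\Omega h\bigl(|v|-v\bigr)\,dx = -2\int_\Omega h\, v^{-}\,dx \leq 0,
\]
so $|v|$ is also a minimizer; replacing $v$ by $|v|$ yields a non-negative minimizer, and being a critical point of $I\in C^1(H,\R)$ it is a weak solution of~\eqref{3}.

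The main obstacle is the coercivity estimate: the quadratic form $v\mapsto \int|\nabla v|^2 - \mu\int h v^2$ need not be positive definite, so one cannot bound $\|v\|$ directly from $I(v)$. The trick is to rescale to a unit-norm sequence, use Fatou on the superquadratic term $-\int d\,G(v)\,dx$ to concentrate the weak limit on $W_d$ (precisely the set where that term provides no control), and there close the contradiction via~$\mathbf{(H)}$.
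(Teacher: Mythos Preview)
Your argument is correct and follows the same direct-method strategy as the paper: coercivity via rescaling, Fatou on the superquadratic term $-\int d\,G$, and condition $\mathbf{(H)}$; then weak lower semicontinuity; then $I(|v|)\leq I(v)$ for non-negativity. The only differences are organizational: you apply Fatou first to force $w\in W_d$ and close directly via $\mu\int h w^2\leq (1-m_2)\|w\|^2<1$, whereas the paper splits into the cases $w\in W_d$ and $w\notin W_d$; and for lower semicontinuity of $\int d\,G(v)$ you invoke the subcritical growth of $G$ with a compact embedding, while the paper simply uses Fatou on the non-negative integrand $-d\,G\geq 0$.
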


\begin{proof} 
We divide the proof into two steps : \medskip

\noindent {\bf Step 1.} {\it $I$ is coercive. } \medskip

We assume by contradiction the existence of a sequence $\{v_n\} \subset H$ such that $\|v_n\|\to\infty$ and $I(v_n)$ is bounded from above. We define
$$
w_n = \frac{v_n}{||v_n||}.
$$
Clearly $||w_n||\equiv 1$ and we can assume that
$w_n \rightharpoonup w $ weakly in $ H $ and $ w_n \to w $ strongly in $ L^q(\Omega) $ for $ q \in [2, \frac{2N}{N-2}[.$ 
Since $I(v_n)$ is bounded from above,  we have
\begin{equation}
\label{6}
\limsup_{n \to \infty} \frac{I(v_n)}{||v_n||^2} \leq 0.
\end{equation}

We shall treat separately the two cases : 
$$ 
{(1)} \,\,\, w \in W_d \quad \mbox{ and } \quad  {(2)} \,\,\, w \not \in W_d.
$$

\noindent{\it Case $(1)$: $w \in W_d$.}
In this case, taking {\rm (H)} into account, it follows that 
\begin{equation*}
\label{10}
\int_{\Omega}(|\nabla w|^2 - \mu h(x) w^2) dx  \geq m_2||w||^2.
\end{equation*}
Thus,   and since $G(s) \geq 0$ on $\R$ and $d(x)\leq 0$ in $\Omega$, using the weak lower semicontinuity of $\int_{\Omega}|\nabla u|^2dx$ and the weak convergence of $w_n$, we obtain
\arraycolsep1.5pt
\begin{eqnarray}\label{1000}
\liminf_{n \to \infty} \frac{I(v_n)}{||v_n||^2} 
&=&  \liminf_{n \to \infty} \left[\frac{1}{2} \int_{\Omega} (|\nabla w_n|^2  - \mu h(x) w_n^2) dx - \int_{\Omega} \frac{d(x) G(v_n)}{||v_n||^2} dx \right] \nonumber
\\
&\geq & \frac{1}{2} \int_{\Omega}(|\nabla w|^2 - \mu h(x)w^2) dx 
\geq  \frac{1}{2} m_2 ||w||^2 \geq 0 \geq \limsup_{n \to \infty} \frac{I(v_n)}{||v_n||^2},
\end{eqnarray}
\arraycolsep5pt 
i.e., $\lim_{n \to \infty} \frac{I(v_n)}{||v_n||^2} =0$ and  $w \equiv 0$. However,  using that $2p/(p-1) < 2N/(N-2)$ and $w_n$ is weakly convergent to $w=0$ in $H$, we deduce  the strong convergence of $w_n$ to $w=0$ in $L^{2p/(p-1)}(\Omega)$, which by  the assumptions  $d(x) \leq 0$ on $\Omega$ and $G(s) \geq 0$ on $\R$ implies that
$$
\lim_{n \to \infty}  \frac{I(v_n)}{||v_n||^2} \geq \frac{1}{2} - \lim_{n \to \infty}  \frac{\mu}{2} \int_{\Omega}  h(x) w_n^2 dx - \lim_{n \to \infty}  \int_{\Omega} \frac{h(x)w_n}{||v_n||} dx \geq  \frac{1}{2} .
$$
This is a contradiction showing that case {(1)} cannot occurs.  
\medbreak

\noindent{\it Case $(2)$: $w \not \in W_d$.}
Since $w \not \in W_d$, necessarily $\Omega_0=\{x\in \Omega,\, d(x) w(x)\not=0\}$ has non zero measure and thus $|v_n(x)|=|w_n(x)|\,\|v_n\|\to\infty$ a.e. in $\Omega_0$.  Using the assumptions $d(x) \leq 0$ in $\Omega$ and $G(s) \geq 0$ on $\R$ we deduce from Lemma \ref{prop-g1}-(iv) and Fatou's lemma that
\begin{eqnarray*}
\limsup_{n\to\infty}\int_{\Omega} \frac{d(x)G(v_n)}{v_n^2}w_n^2 dx  
&\leq &
 \limsup_{n\to\infty} \int_{\Omega_0}  \frac{d(x)G(v_n)}{v_n^2}w_n^2 dx
 \\ 
 &\leq& 
\int_{\Omega_0}  \limsup_{n\to\infty} \frac{d(x)G(v_n)}{v_n^2}w_n^2 dx  =-\infty.
\end{eqnarray*}

 On the other hand, using that $w_n$ is weakly convergent in $H$ and that,  by Sobolev's embedding, $||w_n||_{\frac{2p}{p-1}}$ is bounded, it follows 
that
$$
0\geq \limsup_{n \to \infty} \frac{I(v_n)}{||v_n||^2} \geq \liminf_{n \to \infty} \frac{I(v_n)}{||v_n||^2}  \geq -C 
- \limsup_{n \to \infty} \int_{\Omega} \frac{d(x) G(v_n)}{||v_n||^2} dx 
=+\infty ,
$$
a contradiction proving that case {(2)} does not occur and the proof of Step 1 is concluded.

\medskip

\noindent{\bf Step 2.} \, {\it Existence of a minimum of $I$.} \medskip

To show that $I$ admits a global minimizer it now suffices to show that $I$ is weakly lower semicontinuous i.e., if  $\{v_n\} \subset H$ is a sequence 
such that $v_n \rightharpoonup v$ weakly in $H$, and then  $v_n \to v$ strongly in $L^q(\Omega)$ for $q \in [2,\frac{2N}{N-2}[$, we have
\begin{equation}
\label{155}
I(v) \leq \liminf_{n \to \infty}I(v_n).
\end{equation}
Using the weak convergence of the sequence $\{v_n\}$ and the weak lower semicontinuity of $\int_{\Omega}|\nabla v|^2 dx$, we have
\begin{equation}
\label{166}
\frac{1}{2}\int_{\Omega}|\nabla v|^2 dx  - \int_{\Omega}h(x)v dx  \leq \liminf_{n \to \infty} \left[\frac{1}{2}\int_{\Omega} |\nabla v_n|^2 dx - \int_{\Omega} h(x) v_n dx\right].
\end{equation}
Also,  the strong convergence in $L^{\frac{2p}{p-1}}(\Omega)$ implies that
\begin{equation}
\label{177}
\int_{\Omega}\mu h(x) v_n^2 dx \to \int_{\Omega} \mu h(x) v^2 dx.
\end{equation}
Finally, since $-d(x) G(v_n) \geq 0$ on $\Omega$, as a consequence of Fatou's lemma, we obtain
\begin{equation}
\label{188}
\int_{\Omega} - d(x) G(v) dx \leq \liminf_{n \to \infty} \int_{\Omega} -d (x) G(v_n) dx.
\end{equation}
At this point (\ref{155}) follows from (\ref{166})-(\ref{188}).
\medbreak

\noindent{\bf Step 3.} \, {\it Conclusion.} \medskip

 
To conclude the existence of a non negative minimum, observe that, as $h(x)\geq 0$ in $\Omega$ and $G(s)$ is even we have, for every $u\in H$,
$$
I(|u|)\leq I(u),
$$
and hence if $v\in H$ is a minimum of $I$ then $|v|$ is also a minimum. Then we conclude that the infimum $m$ is reached by a non negative function.
\end{proof}

\begin{proof}[Proof of Proposition \ref{prop1}]
By Lemma \ref{minbounded}, (\ref{3}) admits a non negative solution  $v \in H$ and thus, using Lemma  \ref{dual1}, we deduce that (\ref{31}) has a non negative solution.
\end{proof}

We now consider the problem.
\begin{equation}
\label{eq22}
- \Delta u =  d(x)u+ W(x,u, \nabla u), \quad u \in X,
\end{equation}
where we assume
\vspace{3mm}
$$
\leqno{\mathbf{(A4)}} 
\left\{ \begin{array}{c} d\leq 0  \mbox{ with 
}  d \in L^p(\Omega)  \mbox{ for some } p > \frac{N}{2} 
\\[2mm]
     \mbox{  and there exist } \mu_{\pm} \in \,]0,+\infty[  \mbox{ and } h_{\pm} \in L^p(\Omega) \mbox{ with } h_{\pm} \geq 0, \mbox{ such that}
    \\[2mm]  
    -\mu_-|\xi |^2-h_-(x)\leq W(x,u, \xi)  \leq \mu_+ |\xi|^2 + h_+(x) \mbox{ on }\Omega\times\R\times\R^N. 
    \end{array}
\right.
$$ 

\begin{prop}
\label{prop2}
Assume that {\rm (A4)}  holds and, if $\mbox{\rm meas}(\Omega \backslash \mbox{\rm Supp} \, d) >0$, in addition, assume
$$
\left\{ \begin{array}{c} 
\displaystyle \inf_{ \{ u \in W_d,\, ||u||=1 \}} \,  \displaystyle \int_{\Omega} \left(|\nabla u|^2 -  \mu_+ h_+(x) u^2 \right) dx >0,
\\
\displaystyle \inf_{ \{ u \in W_d,\, ||u||=1 \}} \,  \displaystyle \int_{\Omega} \left(|\nabla u|^2 -  \mu_-  h_-(x) u^2 \right) dx >0. 
\end{array}
\right.
$$ 
Then \eqref{eq22} has a solution.
\end{prop}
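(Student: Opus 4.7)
The plan is to build an upper solution $\beta\geq 0$ and a lower solution $\alpha\leq 0$ of \eqref{eq22} and then apply the Boccardo--Murat--Puel theorem (Theorem~\ref{sousBo}), using Proposition~\ref{prop1} twice to construct $\beta$ and $-\alpha$.

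First, observe that the hypothesis of Proposition~\ref{prop2} with the data $(d,\mu_+,h_+)$ is precisely the hypothesis {\rm (H)} required by Proposition~\ref{prop1}: we have $d\leq 0$ in $L^p$ with $p>N/2$, the constant $\mu_+>0$, the function $h_+\geq 0$ in $L^p$, and, if $\mathrm{meas}(\Omega\setminus\mathrm{Supp}\,d)>0$, the coercivity on $W_d$ of the quadratic form associated with $\mu_+ h_+$. Hence Proposition~\ref{prop1} yields a non negative $\beta\in X$ with
\[
-\Delta\beta = d(x)\beta + \mu_+|\nabla\beta|^2 + h_+(x).
\]
Since, by {\rm (A4)}, $W(x,\beta,\nabla\beta)\leq \mu_+|\nabla\beta|^2+h_+(x)$, this gives
$-\Delta\beta\geq d(x)\beta+W(x,\beta,\nabla\beta)$, so $\beta$ is an upper solution of \eqref{eq22}, and $\beta\geq 0$ on $\partial\Omega$. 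Applying Proposition~\ref{prop1} again with the triple $(d,\mu_-,h_-)$ produces a non negative $\tilde\alpha\in X$ satisfying
\[
-\Delta\tilde\alpha = d(x)\tilde\alpha + \mu_-|\nabla\tilde\alpha|^2 + h_-(x).
\]
Setting $\alpha:=-\tilde\alpha\leq 0$, a direct computation gives
$-\Delta\alpha = d(x)\alpha - \mu_-|\nabla\alpha|^2 - h_-(x)$; combining this with the lower bound $W(x,\alpha,\nabla\alpha)\geq -\mu_-|\nabla\alpha|^2-h_-(x)$ from {\rm (A4)}, we get $-\Delta\alpha\leq d(x)\alpha + W(x,\alpha,\nabla\alpha)$, so $\alpha$ is a lower solution, with $\alpha\leq 0$ on $\partial\Omega$.

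Since $\alpha\leq 0\leq\beta$ a.e. in $\Omega$, it remains to recast \eqref{eq22} as $-\Delta u + H(x,u,\nabla u)=0$ with $H(x,u,\xi):=-d(x)u-W(x,u,\xi)$ and verify the natural growth condition in order to invoke Theorem~\ref{sousBo}. Using {\rm (A4)},
\[
|H(x,u,\xi)|\leq |d(x)||u|+\max(\mu_+,\mu_-)|\xi|^2+h_+(x)+h_-(x)
\leq b(|u|)\bigl(k(x)+|\xi|^2\bigr),
\]
with $b(t):=\max(\mu_+,\mu_-)+t$ nondecreasing and $k:=|d|+h_++h_-\in L^1(\Omega)$ (since all three terms are in $L^p(\Omega)$, $p>N/2\geq 1$). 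Theorem~\ref{sousBo} then provides a solution $u\in X$ of \eqref{eq22} with $\alpha\leq u\leq\beta$ a.e.

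I do not foresee any serious obstacle: the only slightly delicate point is matching the signs correctly so that the two one-sided bounds on $W$ in {\rm (A4)} translate into the correct differential inequalities for $\beta$ and $\alpha=-\tilde\alpha$. The rest is a direct application of Proposition~\ref{prop1} and Theorem~\ref{sousBo}.
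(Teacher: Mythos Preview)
Your argument is correct and matches the paper's proof exactly: apply Proposition~\ref{prop1} twice to obtain a non negative upper solution $\beta$ and, after a sign change, a non positive lower solution $\alpha=-\tilde\alpha$, then invoke Theorem~\ref{sousBo}. The only slip is in your explicit verification of the natural growth condition (which the paper leaves implicit): the choice $b(t)=\max(\mu_+,\mu_-)+t$ fails when $\max(\mu_+,\mu_-)<1$, $u=0$, $d(x)=0$ and $h_+(x)+h_-(x)>0$; replacing it by $b(t)=\max(1,\mu_+,\mu_-)(1+t)$ (with the same $k$) fixes this.
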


\begin{proof}
To prove Proposition \ref{prop2} we use Theorem \ref{sousBo}. Thus we need to find a couple of lower and upper solutions $(\alpha, \beta)$ of (\ref{eq22}), 
with $\alpha \leq \beta$. 
Clearly, by {\rm (A4)},  any solution of 
\begin{equation}
\label{1}
- \Delta u = d(x)u + \mu_+ |\nabla u|^2 + h_+(x), \quad u \in X
\end{equation}
is an upper solution of (\ref{eq22}).  Moreover, 
a solution of
\begin{equation}
\label{222}
- \Delta u = d(x)u - \mu_- |\nabla u|^2 - h_-(x), \quad u \in X
\end{equation}
is a lower solution of (\ref{eq22}). 
Now if $w \in X$ is a solution of 
\begin{equation}
\label{3b}
- \Delta u = d(x)u + \mu_- |\nabla u|^2 + h_-(x), \quad u \in X,
\end{equation}
then $u = - w$ satisfies (\ref{222}). 
Thus if we find a non negative solution $u_1 \in X$ of (\ref{1}) and a non negative solution $u_2 \in X$ of (\ref{3b}) then, setting $\beta = u_1$ and $\alpha = -u_2$, we have the required couple of lower and upper solutions for Theorem \ref{sousBo}. By Proposition \ref{prop1}, we know that such non negative solutions of (\ref{1}) and (\ref{3b}) exist and this concludes the proof.
\end{proof}

As a direct consequence of the previous proposition, we obtain

\begin{cor}
\label{prop3}
Assume  {\rm (A1)} and, if $\mbox{\rm meas}(\Omega \backslash \mbox{\rm Supp} \, c) > 0$, assume also that {\rm (Hc)} holds. 
Then $(P_{\lambda})$ has a solution for any $\lambda <0$.
\end{cor}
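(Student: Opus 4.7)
The plan is to recognise that Corollary~\ref{prop3} is a direct specialization of Proposition~\ref{prop2}. Fix $\lambda<0$ and choose $d(x):=\lambda c(x)$ together with the splitting $W(x,u,\xi):=\mu(x)|\xi|^2+h(x)$, so that $(P_\lambda)$ takes the form $-\Delta u=d(x)u+W(x,u,\nabla u)$ of equation~\eqref{eq22}. Since $c\gneqq 0$, $c\in L^p(\Omega)$ with $p>N/2$, and $\lambda<0$, the function $d$ satisfies $d\leq 0$ a.e.\ and $d\in L^p(\Omega)$; because $\lambda\neq 0$, $\mathrm{Supp}\,d=\mathrm{Supp}\,c$, and therefore $W_d=W_c$, so condition~(Hc) translates verbatim into the hypothesis placed on $W_d$ in Proposition~\ref{prop2}.

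Next, set $\mu_+:=\|\mu^+\|_\infty$, $\mu_-:=\|\mu^-\|_\infty$ and $h_\pm:=h^\pm$. The pointwise decompositions $\mu=\mu^+-\mu^-$ and $h=h^+-h^-$ yield at once
\[
-\mu_-|\xi|^2-h_-(x)\;\leq\;W(x,u,\xi)\;\leq\;\mu_+|\xi|^2+h_+(x),
\]
so that hypothesis~(A4) is in force. If $\mathrm{meas}(\Omega\setminus\mathrm{Supp}\,c)>0$, the two strict-positivity conditions required by Proposition~\ref{prop2} are precisely the two lines of~(Hc); in the opposite case no extra hypothesis is needed. Invoking Proposition~\ref{prop2} therefore delivers the desired solution of $(P_\lambda)$.

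The only caveat is that~(A4) demands $\mu_\pm>0$ strictly, whereas $\mu^+$ or $\mu^-$ may vanish identically. In such a degenerate situation I would simply replace the offending constant by a small $\varepsilon>0$: the upper (resp.\ lower) bound on $W$ is preserved, while the inequality $\inf_{W_d,\,\|u\|=1}\int_\Omega(|\nabla u|^2-\varepsilon h_\pm u^2)\,dx>0$ follows from the Sobolev embedding and the inclusion $h^\pm\in L^{N/2}(\Omega)$ once $\varepsilon$ is taken small enough. I do not anticipate a genuine obstacle here, as all of the substantive work (coercivity and weak lower semicontinuity of the modified functional, the logarithmic change of variables linking \eqref{3} to \eqref{31}, and the sub/super-solution construction) has already been absorbed into Propositions~\ref{prop1} and~\ref{prop2}; the corollary is essentially a relabelling.
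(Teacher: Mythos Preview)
Your proposal is correct and follows exactly the paper's approach: the paper states the corollary as ``a direct consequence of the previous proposition'' (Proposition~\ref{prop2}) and gives no further argument, while you spell out the identification $d(x)=\lambda c(x)$, $W(x,u,\xi)=\mu(x)|\xi|^2+h(x)$, $\mu_\pm=\|\mu^\pm\|_\infty$, $h_\pm=h^\pm$, and observe that $W_d=W_c$ so that (Hc) becomes the hypothesis of Proposition~\ref{prop2}. Your caveat about the strict positivity of $\mu_\pm$ in (A4) is a genuine (if minor) point that the paper does not address; your fix via a small $\varepsilon>0$ and the Sobolev embedding is correct and costs nothing.
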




As another direct consequence of Proposition \ref{prop2}, just noting that $W_d=H$ in case $d(x)\equiv 0$,
 we have

\begin{cor}
\label{prop3a}
Assume {\rm (A1)} and {\rm (H0)} hold.  Then $(P_0)$ has a  solution.
\end{cor}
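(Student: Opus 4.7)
The plan is to derive Corollary~\ref{prop3a} as a direct application of Proposition~\ref{prop2} in the special case $d(x)\equiv 0$. I would first rewrite $(P_0)$, namely $-\Delta u = \mu(x)|\nabla u|^2 + h(x)$, as an instance of equation~\eqref{eq22} by setting $d(x)\equiv 0$ and
$$
W(x,u,\xi):= \mu(x)|\xi|^2 + h(x).
$$
Under this choice we have $\mbox{meas}(\Omega\setminus\mbox{Supp}\,d)=\mbox{meas}(\Omega)>0$, and the solution space $W_d = \{w\in H^1_0(\Omega)\,:\, d(x)w(x)=0 \text{ a.e.}\}$ collapses to $W_d=H^1_0(\Omega)=H$, so the additional integral hypothesis of Proposition~\ref{prop2} becomes exactly~{\rm (H0)}.

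Next I would verify that condition~{\rm (A4)} is met. Splitting $\mu=\mu^+-\mu^-$ and $h=h^+-h^-$ and using that $\mu^\pm\leq \|\mu^\pm\|_\infty$ pointwise, the choices
$$
\mu_{\pm}:= \|\mu^{\pm}\|_\infty, \qquad h_{\pm}(x):= h^{\pm}(x),
$$
yield
$$
-\mu_-|\xi|^2 - h_-(x) \;\leq\; \mu(x)|\xi|^2 + h(x) \;\leq\; \mu_+|\xi|^2 + h_+(x),
$$
which is precisely the natural growth bound required by {\rm (A4)}. The regularity $h_\pm\in L^p(\Omega)$ with $p>N/2$ follows from $h\in L^p(\Omega)$ via~{\rm (A1)}.

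A small wrinkle is that {\rm (A4)} demands $\mu_\pm>0$ strictly. If, say, $\|\mu^+\|_\infty=0$, one simply replaces $\mu_+$ by any sufficiently small positive number $\varepsilon$; the upper bound on $W$ still holds, and the corresponding infimum condition in Proposition~\ref{prop2} reduces to $\inf_{\|u\|=1}\int_\Omega(|\nabla u|^2 -\varepsilon h^+ u^2)\,dx>0$, which is automatic for $\varepsilon$ small by the Sobolev embedding since $\inf\int_\Omega|\nabla u|^2\,dx=1$. Otherwise, with $\mu_\pm=\|\mu^\pm\|_\infty>0$, the two infimum conditions of Proposition~\ref{prop2} specialise verbatim to the two inequalities forming~{\rm (H0)}.

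Invoking Proposition~\ref{prop2} then provides a solution $u\in X$ of \eqref{eq22}, which is a solution of $(P_0)$. No obstacle is expected beyond the bookkeeping just described; the substance lies entirely in Proposition~\ref{prop2}, which in turn was reduced via lower and upper solutions to the two scalar problems handled by Proposition~\ref{prop1}.
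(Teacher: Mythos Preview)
Your proof is correct and follows precisely the paper's approach: apply Proposition~\ref{prop2} with $d\equiv 0$, so that $W_d=H$ and the two infimum conditions there reduce to {\rm (H0)}. The paper states this in one line without spelling out the verification of {\rm (A4)} or the harmless degenerate case $\|\mu^\pm\|_\infty=0$, but your handling of these details is fine.
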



\begin{remark}\label{ktL2}
Assume that 
$c$  and $h$ belong to $L^p(\Omega)$ for some $p > \frac{N}{2}$,
and that 	$\mu \in L^{\infty}(\Omega)$. 
	Assume that there exists an open subset $O(c)$ in $\Omega$ with $C^1$ boundary $\partial O(c)$  such that 
	$c(x)=0$ a.e. in $\overline{O(c)}$, $c(x)<0$ a.e. in $ \Omega\setminus O(c)$
and
		$\mu(x)\geq\mu_1>0$,   in  $\overline{O(c)}$. Then
%
$W_c=H_0^1(O(c))$
and
a necessary condition for the existence of a solution of $(P_\lambda)$ is that   
%
%
%
%
%
%
%
%
    \begin{equation}\label{k1}
    \inf_{\{\phi\in W_c, ||\phi||=1\}} \int_\Omega \left(\frac{1}{\mu(x)}|\nabla\phi|^2 -h(x)\phi^2 \right)\, dx >0.
    \end{equation}
Indeed, to show \eqref{k1}, we use an argument inspired by \cite{AbDaPe,FeMu1, FeMu2}.  
Suppose that $(P_\lambda)$ has a solution $u\in X$.  Then for any $\phi\in C_0^\infty(\Omega)$ we have
    \begin{equation}\label{k20}
    \int_\Omega\left(\nabla u\nabla(\phi^2)-\lambda c(x)u\phi^2-\mu(x)|\nabla u|^2\phi^2-h(x)\phi^2\right)\, dx = 0.
    \end{equation}
    and hence, for every $\phi\in C_0^\infty(\Omega)\cap W_c$ we obtain
    \begin{equation}\label{k2}
    \int_{O(c)}\left(\nabla u\nabla(\phi^2)-\mu(x)|\nabla u|^2\phi^2-h(x)\phi^2\right) dx = 0.
    \end{equation}
But, for  $\phi\in C_0^\infty(\Omega)\cap W_c$, by Young inequality,
    \begin{equation}\label{k3}
    \begin{array}{rcl}
		\displaystyle
		\int_{O(c)}\nabla u\nabla(\phi^2)\,dx &=& 
		\displaystyle
		\int_{O(c)} 2\phi\nabla u\nabla\phi\, dx
    \\
		&\leq& 
		\displaystyle
		\int_{O(c)} \left(\frac{1}{\mu(x)}|\nabla\phi|^2+\mu(x)|\nabla u|^2\phi^2\right) dx
    \end{array}
		\end{equation}
and thus by density
    $$  
		\int_{O(c)}\left(\frac{1}{\mu(x)}|\nabla\phi|^2 -h(x)\phi^2\right) dx \geq 0 \quad
        \mbox{for all }\phi\in W_c.
    $$

Thus, the infimum in (\ref{k1}) is non negative. If it is zero then, by Poincar\'e inequality, we also have that
\begin{equation}
    \label{k4}
    \inf_{\{\phi\in W_c: ||\phi||_2=1\}} \int_{O(c)}\left(\frac{1}{\mu(x)}|\nabla\phi|^2 -h(x)\phi^2\right) dx=0.
    \end{equation}
Let us show that it cannot take place.
Arguing by contradiction we assume that  \eqref{k4} hold.
Then, by standard arguments, there exists a $\phi_0\in W_c\setminus\{ 0\}$ such that
    \begin{equation}
    \label{k5}
    \int_{O(c)} \left(\frac{1}{\mu(x)}|\nabla\phi_0|^2 -h(x)\phi_0^2\right)\, dx=0.
    \end{equation}
    In addition, $\phi_0$ is an eigenfunction  associated to the first eigenvalue (which we are assuming equal to zero) of the elliptic eigenvalue problem
    $$
   \left\{  \begin{array}{c}
    -\mbox{div\,} \left( \displaystyle\frac{\nabla \phi}{\mu(x)}\right) - h(x) \phi = \lambda \phi \, , \ \ \mbox{in } O(c),
    \\
    \phi=0, \ \ \mbox{ on }\partial O(c) .
    \end{array}
    \right.
    $$
 As a consequence, we may assume that $\phi_0(x)>0$ in $O(c)$.
    
Setting $\phi=\phi_0$ in \eqref{k2}, we have by \eqref{k5} that
    $$  
    \int_{O(c)} \left(
    2 \phi_0  \nabla u\nabla\phi_0    
    -\mu(x)|\nabla u|^2\phi_0^2 -\frac{1}{\mu(x)}|\nabla\phi_0|^2\right)\, dx = 0.
    $$
That is,
    $$  \int_{O(c)} \Big |\frac{1}{\sqrt{\mu(x)}}\nabla\phi_0-\sqrt{\mu(x)}\phi_0\nabla u \Big|^2\, dx = 0
    $$
from which we deduce that $\nabla\phi_0=\mu(x)\phi_0\nabla u$ in $O(c)$,
 i.e.,
 \begin{equation}
\label{ajout1}
     \nabla u=\frac{1}{\mu(x)}\frac{\nabla\phi_0(x)}{\phi_0(x)}  \quad \mbox{in}\ O(c).
\end{equation}
  We also observe that $\frac{\partial\phi_0}{\partial\nu}(x)>0$ on $\partial O(c)$, where $\nu$ is
an inner normal vector at $\partial O(c)$ (see e.g. \cite[Lemma 3.4]{GiTu}).  This implies that
$\frac{\nabla\phi_0(x)}{\phi_0(x)}\not\in L^2(O(c))$ and then by  the fact that $\mu(x)\geq \mu_1$ in $O(c)$ and  (\ref{ajout1})  we deduce that $\nabla u\not\in L^2(\Omega)$.  This contradicts $u\in X$  proving 
that  \eqref{k4} is impossible and thus  \eqref{k1} holds.
\medbreak

Now if in addition to the above assumptions we assume that $\mu(x)\equiv \mu>0$ is a constant and $h(x)\geq 0$  it follows from \eqref{k1} that,  if $(P_\lambda)$
has a solution, we have
	\begin{equation}\label{k6}
	\inf_{\{\phi\in W_c: \|\phi\|=1\}} \int_\Omega\left(|\nabla\phi|^2-\mu h(x)\phi^2\right)\, dx>0.
	\end{equation}
Note that under these assumptions,
{\rm (Hc)} coincides with \eqref{k6} and thus $(P_\lambda)$ when $\lambda<0$ has 
a solution if and only if {\rm (Hc)} holds.  The statement in Remark \ref{ktL1} also follows in a similar
way.
Finally when $\lambda=0$ (equivalently when $c\equiv 0$), we have $O(c)=\Omega$, $W_c=H_0^1(\Omega)$ and \eqref{k6} reduces to {\rm (H0)}. Thus 
$(P_0)$ has a solution if and only if {\rm (H0)}
holds.
\end{remark}

\section{Uniqueness results}
\label{Sectionuniqueness-0}

As in the previous section, we consider the boundary value problem \eqref{31}.  Here we assume 
	$$	
	\hspace{1cm}
\left\{ \begin{array}{c} d \mbox{ and } h \mbox{ belong to }  L^p(\Omega) \quad \mbox{for some } p > \frac{N}{2},
\\[2mm]
		d(x)\leq 0 \ \mbox{in}\ \Omega\mbox{ and } \mu \in L^{\infty}(\Omega).  
\end{array}
\right.
\leqno{\mathbf{(A5)}} 
%
	$$
Our main result is
\begin{prop}\label{UUniqueness}
Assume that {\rm (A5)} hold. Then  \eqref{31} has at most one solution.
\end{prop}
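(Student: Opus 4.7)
As announced in the introduction, the argument splits into a regularity step and a linearization step.

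\textbf{Step 1 (Improved gradient integrability).} The core claim is that every solution $u \in X$ of~\eqref{31} enjoys $|\nabla u| \in L^r(\Omega)$ for some $r > N$. The idea is to exploit the boundedness of $u$ through an exponential change of unknown which neutralizes, up to a one-sided error, the quadratic gradient term. Setting $M := \|\mu\|_\infty$ and $v := (e^{Mu}-1)/M$, a direct computation yields
\[
-\Delta v = e^{Mu}[d(x)u+h(x)] + e^{Mu}[\mu(x)-M]|\nabla u|^2 \leq e^{Mu}[d(x)u+h(x)],
\]
since $\mu(x) \leq M$; a companion transformation based on $e^{-Mu}$ produces the reverse one-sided inequality. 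Combined with the bootstrap machinery of \cite{BoMuPu2, Gi, GiMo}, these controls upgrade the $H^1 \cap L^\infty$ regularity of $u$ to $W^{2,q}(\Omega)$ for some $q > N/2$; Sobolev embedding then gives $|\nabla u| \in L^r(\Omega)$ with $r > N$.

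\textbf{Step 2 (Linearization of the difference).} Let $u_1, u_2 \in X$ be two solutions and set $w := u_1 - u_2 \in H^1_0(\Omega) \cap L^\infty(\Omega)$. Subtracting the equations and factoring $|\nabla u_1|^2 - |\nabla u_2|^2 = (\nabla u_1 + \nabla u_2) \cdot \nabla w$, one finds that $w$ solves the \emph{linear} problem
\[
-\Delta w + b(x) \cdot \nabla w - d(x)\, w = 0 \text{ in } \Omega, \qquad w = 0 \text{ on } \partial\Omega,
\]
with $b(x) := -\mu(x)(\nabla u_1 + \nabla u_2)$. By Step~1, $b \in L^r(\Omega; \R^N)$ for some $r > N$; by {\rm (A5)}, $-d \geq 0$ and $d \in L^p(\Omega)$ with $p > N/2$.

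\textbf{Step 3 (Conclusion via a maximum principle).} Testing the linear equation with $(w-k)^+ \in H^1_0(\Omega)$ for $k \geq 0$ and dropping the non-positive zeroth-order contribution (since $d \leq 0$ and $w(w-k)^+ \geq 0$ on $A_k := \{w>k\}$), one obtains
\[
\|\nabla(w-k)^+\|_2^2 \leq \|b\|_r\, \|\nabla(w-k)^+\|_2\, \|(w-k)^+\|_s,
\]
where $\tfrac{1}{r} + \tfrac{1}{2} + \tfrac{1}{s} = 1$. Because $r > N$ forces $s < 2^*$, H\"older and Sobolev embedding yield
\[
\|\nabla(w-k)^+\|_2^2 \leq C\, \|b\|_r\, |A_k|^{\frac{1}{s}-\frac{1}{2^*}}\, \|\nabla(w-k)^+\|_2^2.
\]
A Stampacchia-type iteration, made possible by $w \in L^\infty(\Omega)$ so that $|A_k| \to 0$ as $k$ approaches $\operatorname{ess\,sup} w$, forces $w^+ \equiv 0$; the same reasoning applied to $-w$ gives $w^- \equiv 0$, hence $u_1 \equiv u_2$.

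\textbf{Main obstacle.} The delicate part is Step~1. The natural energy identity only delivers $|\nabla u|^2 \in L^1(\Omega)$, which is far too weak: in Step~3 one really needs the drift $b$ to sit in a subcritical space ($L^r$ with $r > N$) in order to be dominated by the principal elliptic term. Pushing the integrability of $\nabla u$ past the critical exponent is precisely where the exponential-test-function techniques of \cite{BoMuPu2, Gi, GiMo}, together with the assumption $d, h \in L^p$ for some $p > N/2$, play their decisive role.
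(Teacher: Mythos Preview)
Your overall architecture---improve gradient integrability, linearize the difference, run a level-set/Stampacchia argument---is exactly the paper's. But Step~1 as written does not do what you claim. The one-sided inequalities $-\Delta v\le e^{Mu}[d(x)u+h(x)]$ and its companion give no control on $\Delta v$ from both sides for the \emph{same} function, so they cannot feed into a Calder\'on--Zygmund $W^{2,q}$ estimate; and the references \cite{BoMuPu2,Gi,GiMo} are reverse-H\"older/Gehring results that only yield $u\in W^{1,q}_{loc}$ for some $q$ \emph{slightly} larger than $2$. The mechanism that actually lifts the gradient integrability is different: one first uses De~Giorgi--Nash--Moser to obtain $u\in C^{0,\alpha}(\overline\Omega)$ (this you never invoke), then interpolates via Miranda's theorem between $C^{0,\alpha}$ and $W^{2,q/2}_{loc}$ to gain a strictly better $W^{1,t}_{loc}$, and iterates. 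The exponential change of unknown is a red herring here.

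Even after fixing Step~1, two further points separate your argument from the paper's. First, the regularity one obtains is only \emph{local}: there is no reason for $|\nabla u|\in L^r(\Omega)$ globally. This is harmless because $w=u_1-u_2\in C(\overline\Omega)$ vanishes on $\partial\Omega$, so $(w-k)^+$ has compact support for $k>0$---but you should say so. Second, the paper stops the bootstrap at the borderline $W^{1,N}_{loc}$ and then runs the truncation argument at that critical exponent; with $r=N$ your Step~3 collapses since $s=2^*$ and the smallness factor $|A_k|^{1/s-1/2^*}$ becomes $1$. The paper compensates by exploiting the \emph{continuity} of $k\mapsto \|\nabla u_1+\nabla u_2\|_{L^N(\{|v|\ge k\}\cap Z)}$ (after excising the at-most-countably-many ``fat'' level sets to form $Z$) and the fact that this quantity vanishes at $k=\|v\|_\infty$; one then picks $k_0$ just below $\|v\|_\infty$ to make the coefficient strictly less than $1$. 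Your cleaner inequality with $r>N$ would also work, but only once Step~1 is redone correctly and pushed past $N$---which is possible with the Miranda bootstrap, but is not what your exponential substitution delivers.
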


To prove Proposition \ref{UUniqueness} we shall first prove that the solutions of \eqref{31} belong to 
$ C(\overline \Omega)\cap W^{1,N}_{loc}(\Omega)$. Then, 
using this additional regularity, we prove the uniqueness.

\begin{remark}
\label{lambda_0}
Proposition \ref{UUniqueness} implies that $(P_{\lambda})$ for $\lambda  \leq 0$ has at most one solution.
\end{remark}

\begin{remark}
\label{newrequire1}
As we mention in the Introduction a general theory of uniqueness for problems with quadratic growth in the gradient was developed in \cite{BaMu} and extended in \cite{BaBlGeKo}. The uniqueness results closer to our setting are Theorems 2.1 and 2.2 of \cite{BaBlGeKo}. Unfortunately it is not possible to use directly these results to derive Proposition \ref{UUniqueness}.  Indeed, since $d(x)$ may vanish on some part of $\Omega$, \cite[Theorem 2.1]{BaBlGeKo} is not applicable. Also, to use \cite[Theorem 2.2]{BaBlGeKo} which corresponds to the case $\lambda =0$, we need either $h(x)$ to have a sign or to be sufficiently small. 
\end{remark}

\begin{lem}
\label{betterreg} 
Assume that {\rm (A5)} hold.
Then any solution of \eqref{31} belongs to $C(\overline\Omega)\cap W^{1,N}_{loc}(\Omega)$.
\end{lem}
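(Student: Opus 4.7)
The inclusion $u\in C(\overline\Omega)$ comes for free from the De Giorgi--Nash--Moser / Ladyzhenskaya--Ural'tseva theory: since $u\in X$ solves a quasilinear equation with $L^\infty$ coefficient in front of the natural-growth term and lower-order data in $L^p$ with $p>N/2$, Theorem IX.2.2 of \cite{LU68} (already invoked in the introduction) gives $u\in C^{0,\alpha}(\overline\Omega)$ for some $\alpha>0$; in particular $u\in C(\overline\Omega)$. All the work is therefore in producing the interior regularity $u\in W^{1,N}_{loc}(\Omega)$.

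For this I would perform an exponential change of variable that converts the quadratic gradient term into one with a favourable ``absorption'' sign. Since $u\in L^\infty(\Omega)$ and $\mu\in L^\infty(\Omega)$, fix $K>\|\mu\|_\infty$ and set
$$
v:=\frac{1}{K}\bigl(e^{Ku}-1\bigr) \in X,
\qquad \nabla v = e^{Ku}\nabla u.
$$
A direct computation using (\ref{31}) shows
$$
-\Delta v + (K-\mu(x))\,e^{-Ku}\,|\nabla v|^2 \;=\; e^{Ku}\bigl(d(x)u+h(x)\bigr) \quad \text{in }\Omega.
$$
Here $a(x):=(K-\mu(x))e^{-Ku(x)}$ is non-negative (by the choice of $K$) and bounded, while the right-hand side $F(x):=e^{Ku(x)}(d(x)u(x)+h(x))$ belongs to $L^p(\Omega)$ with $p>N/2$ (using $u\in L^\infty$, $d,h\in L^p$). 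Thus $v$ satisfies an equation of absorption type.

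For such equations, a classical test-function argument going back to Boccardo--Murat--Puel \cite{BoMuPu2}, cf.\ also \cite{Gi,GiMo}, yields higher gradient integrability. Concretely, plugging a test function of the form $(e^{\sigma v}-1)\psi^2$ (with $\psi$ a smooth cutoff and $\sigma>0$ chosen large enough to exploit the sign of $a$) into the weak formulation and combining the resulting estimate with the $L^p$-integrability of $F$ and the boundedness of $v$, one iterates to obtain $|\nabla v|\in L^{q}_{loc}(\Omega)$ for some $q>N$. Transferring back through $\nabla u=e^{-Ku}\nabla v$ and using that $e^{-Ku}$ is bounded away from $0$ and $\infty$, one concludes $|\nabla u|\in L^q_{loc}(\Omega)$, hence in particular $u\in W^{1,N}_{loc}(\Omega)$.

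The main obstacle I expect is precisely this gradient bootstrap: starting only from $v\in H^1(\Omega)\cap L^\infty(\Omega)$ with data in $L^p$, $p>N/2$, and with an absorption term whose coefficient $a$ vanishes nowhere only up to the $L^\infty$-bound, one has to iterate carefully (essentially a Moser-type scheme adapted to the absorption structure) in order to cross the threshold $|\nabla v|\in L^N_{loc}$. Once that threshold is crossed, the conclusion follows from the chain rule together with standard Calder\'on--Zygmund estimates applied to (\ref{31}) rewritten as $-\Delta u = F_0(x)$ with $F_0\in L^r_{loc}$ for some $r>N/2$.
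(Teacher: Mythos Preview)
Your Step 1 (H\"older continuity via \cite[Theorem IX.2.2]{LU68}) coincides with the paper's. The gap is in the gradient bootstrap.

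The exponential change $v=\frac1K(e^{Ku}-1)$ is a legitimate trick and does produce an equation with an absorption term $a(x)|\nabla v|^2$, $a\geq a_0>0$. But the test function $(e^{\sigma v}-1)\psi^2$ you propose only yields
\[
\sigma\int e^{\sigma v}|\nabla v|^2\psi^2 + \int a(x)|\nabla v|^2(e^{\sigma v}-1)\psi^2
=\int F(e^{\sigma v}-1)\psi^2 - 2\int (e^{\sigma v}-1)\psi\,\nabla v\cdot\nabla\psi,
\]
which, since $v\in L^\infty$, is still nothing more than a local $L^2$ bound on $\nabla v$. There is no mechanism here for raising the integrability exponent of $|\nabla v|$; the absorption sign controls the size of the gradient term, not its power. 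A Moser-type iteration would require test functions involving $|\nabla v|^k$, which are not admissible, so the claimed iteration to $|\nabla v|\in L^q_{loc}$ with $q>N$ does not go through. Your acknowledgment that this is ``the main obstacle'' is accurate; the proposal does not contain an idea that overcomes it.

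The paper proceeds differently and, crucially, exploits the H\"older regularity \emph{inside} the bootstrap rather than only as a byproduct. After Step~1 one invokes the reverse H\"older inequality of Giaquinta--Modica / Boccardo--Murat--Puel \cite{Gi,GiMo,BoMuPu2} to obtain $u\in W^{1,q}_{loc}$ for some $q>2$ (no exponential change of variable is needed). Then the right-hand side of \eqref{31} lies in $L^{q/2}_{loc}$, so Calder\'on--Zygmund gives $u\in W^{2,q/2}_{loc}$. The key step is Miranda's interpolation \cite[Teorema IV]{Mi} between $C^{0,\alpha}(\overline\Omega)$ and $W^{2,q/2}_{loc}(\Omega)$, which yields $u\in W^{1,t_1}_{loc}$ with
\[
t_1=\frac{\frac{q}{2}(2-\alpha)-\alpha}{1-\alpha}>q.
\]
One iterates $t_n\mapsto t_{n+1}$ by the same scheme; the sequence cannot stagnate below $N$ since its only fixed point is $2<q$, so after finitely many steps $t_n\geq N$. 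In short, the H\"older exponent $\alpha$ is what drives the gradient exponent upward at each step---that is the missing ingredient in your plan.
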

\begin{proof} Let $u \in X$ be an arbitrary solution of \eqref{31}. We divide the proof that $u \in W^{1,N}_{loc}(\Omega)$ into three steps.
\medbreak

\noindent {\bf Step 1}.  $u \in C(\overline{\Omega})$. 
\medskip

Since condition (A)  holds the result follows directly  
from 
\cite[Theorem IX.2.2]{LU68}. Indeed,  \eqref{31} is of the form of equation (1.1) of Section IV.1 of \cite{LU68}. In addition, under {\rm (A5)}  the assumptions (1.2)-(1.3) considered in \cite[Section IV.1]{LU68}  are satisfied.
 Hence, $u \in C^{0, \alpha}(\overline{\Omega})$ for some  $\alpha \in (0,1)$ and in particular $u\in C(\overline\Omega)$.
\medbreak

\noindent {\bf Step 2}. $u \in W^{1, q}_{loc}(\Omega)$ for some $q >2$. \medskip

Here we use \cite[Proposition 2.1, p.145]{Gi} or alternatively \cite[Theorem 2.5 p.155]{GiMo},  (see also \cite[Th\'eor\`eme 2.1]{BoMuPu2})  to deduce 
that $u \in W^{1,q}_{loc}(\Omega)$ for  some $q>2$. 
\medbreak

\noindent {\bf Step 3}. Conclusion. \medskip


We follows some arguments of \cite{BeFr, Fr}, see also \cite{DoGi}. First note that without restriction we can assume that $q <N$. Since $u \in W^{1,q}_{loc}(\Omega)$ we have,
\begin{equation}\label{t1}
 - \Delta u = \xi(x) \quad \mbox{where} \quad \xi(x) = d(x)u + \mu(x) |\nabla u|^2 + h(x) \in L^{\frac{q}{2}}_{loc}(\Omega).
\end{equation}
By standard regularity argument, see for example \cite[Theorem~9.11]{GiTu}, we deduce that $u \in W^{2,\frac{q}{2}}_{loc}(\Omega)$. Now using Miranda's interpolation Theorem \cite[Teorema IV]{Mi} between  $C^{0,\alpha}(\overline{\Omega})$ and $W^{2,\frac{q}{2}}_{loc}(\Omega)$  it follows, since $u \in C^{0,\alpha}(\overline{\Omega})$, that 
$$u \in   W^{1,t_1}_{loc}(\Omega)  \quad \mbox{where} \quad t_1 =  \frac{\frac{q}{2} (2- \alpha) - \alpha}{1 - \alpha} >q .$$ If $t_1 \geq N$ we are done. Otherwise from (\ref{t1}) and  classical regularity $u \in W^{2, \frac{t_1}{2}}_{loc}(\Omega)$. Denoting
\begin{equation}\label{t2}
t_n = \frac{\frac{t_{n-1}}{2} (2- \alpha) - \alpha}{1 - \alpha} > t_{n -1} > q > 2 
\end{equation}
by a bootstrap argument we get $u \in W^{2, \frac{t_n}{2}}_{loc}(\Omega)$ for all $n \in \N$ as long as $t_{n-1} \leq N$. We now claim that the sequence $\{t_n\}$ does not converge before reaching $N$. Indeed if we assume that $\{t_n\}$ has a finite limite $l$ we deduce from (\ref{t2}) that $l=2$ which contradicts $t_n >q >2$. At this point the proof of the lemma is completed.
\end{proof}

Using the fact that, under {\rm (A5)}, the solutions of \eqref{31} belong to 
$C(\overline\Omega)\cap W^{1,N}_{loc}(\Omega)$ we can now obtain our uniqueness result. Here we adapt an argument from \cite{BeMeMuPo}, based in turn on an original idea from \cite{BoMa}.

\begin{lem}
\label{Colette}
Assume that {\rm (A5)} hold. Then \eqref{31} has at most one solution in $X\cap W^{1,N}_{loc}(\Omega) \cap C(\overline\Omega)$.
\end{lem}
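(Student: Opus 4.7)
The plan is to suppose $u_1, u_2 \in X \cap W^{1,N}_{\mathrm{loc}}(\Omega) \cap C(\overline\Omega)$ are two solutions of \eqref{31} and to show that $w := u_1 - u_2$ vanishes identically. The starting point is the algebraic identity $|\nabla u_1|^2 - |\nabla u_2|^2 = \nabla(u_1 + u_2) \cdot \nabla w$, which linearizes the problem: subtracting the two equations makes $w$ a weak solution of
\begin{equation*}
-\Delta w - B(x) \cdot \nabla w - d(x) w = 0 \quad \text{in } \Omega, \qquad w = 0 \text{ on } \partial\Omega,
\end{equation*}
where the drift $B(x) := \mu(x)\,\nabla(u_1+u_2)$ belongs to $L^N_{\mathrm{loc}}(\Omega)$ thanks to Lemma~\ref{betterreg}, and the zero-order coefficient $-d(x) \geq 0$ by {\rm (A5)}. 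By symmetry it suffices to prove $w \leq 0$.

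For every $\epsilon>0$, continuity of $w$ on $\overline\Omega$ together with $w|_{\partial\Omega}=0$ makes the superlevel set $\Omega_\epsilon := \{w > \epsilon\}$ relatively compact in $\Omega$, so $(w-\epsilon)^+\in H^1_0(\Omega)$ has compact support in $\Omega_\epsilon$ and $B|_{\Omega_\epsilon}\in L^N(\Omega_\epsilon)$. Testing the equation with $(w-\epsilon)^+$, using that $d\leq 0$ and $w\,(w-\epsilon)^+\geq 0$ on $\Omega_\epsilon$ to discard the zero-order term, and then estimating the drift contribution by Cauchy--Schwarz, Hölder's inequality, and the Sobolev embedding $H^1_0(\Omega)\hookrightarrow L^{2^*}(\Omega)$ with best constant $\mathcal{S}_N$, I expect to reach the self-improving inequality
\begin{equation*}
\|\nabla(w-\epsilon)^+\|_{L^2(\Omega)}^{\,2}
\leq \mathcal{S}_N^{-1}\,\|\mu\|_\infty\,\|\nabla(u_1+u_2)\|_{L^N(\Omega_\epsilon)}\,\|\nabla(w-\epsilon)^+\|_{L^2(\Omega)}^{\,2}.
\end{equation*}
Hence either $(w-\epsilon)^+\equiv 0$, yielding $w\leq\epsilon$ a.e., or $\|\nabla(u_1+u_2)\|_{L^N(\Omega_\epsilon)}\geq \mathcal{S}_N/\|\mu\|_\infty$. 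Since $\Omega_\epsilon$ decreases to $\{w=\|w^+\|_\infty\}$ as $\epsilon\nearrow\|w^+\|_\infty$ and $|\nabla(u_1+u_2)|^N$ is locally integrable in $\Omega$, absolute continuity of the integral makes the second alternative impossible for $\epsilon$ close enough to $\|w^+\|_\infty$ whenever $|\{w=\|w^+\|_\infty\}|=0$, and we conclude $w\leq 0$.

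The main obstacle is the residual case in which the essential supremum of $w$ is attained on a set of positive measure, where the absorption above does not close by itself. I intend to handle it through a strong maximum principle for linear elliptic equations with drift in $L^N_{\mathrm{loc}}$ and nonnegative zero-order coefficient in $L^p_{\mathrm{loc}}$, $p>N/2$---available via classical results of Serrin and Trudinger and, in spirit, via the exponential test function trick of \cite{BoMa} adapted in \cite{BeMeMuPo}: any positive interior maximum of $w$ would force $w$ to be constant, contradicting $w|_{\partial\Omega}=0$. Finally, exchanging the roles of $u_1$ and $u_2$ and repeating the argument yields $w\geq 0$, whence $u_1\equiv u_2$.
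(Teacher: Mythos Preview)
Your argument is essentially the paper's, up to the point where you face the level set $\{w=\|w^+\|_\infty\}$ possibly having positive measure. The linearization, the truncated test function $(w-\epsilon)^+$, the H\"older--Sobolev absorption, and the compactness of the support inside $\Omega$ thanks to $w\in C(\overline\Omega)$ all match the paper line for line (the paper uses the two--sided truncation $G_k(v)$ so that both signs are treated at once, but this is cosmetic).

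The genuine divergence is your treatment of the residual case. You appeal to a strong maximum principle for $-\Delta w - B\cdot\nabla w - d\,w=0$ with $B\in L^N_{\mathrm{loc}}$; this is correct in spirit but is precisely the borderline drift integrability where the textbook statements (e.g.\ Gilbarg--Trudinger, Chap.~8) do not apply verbatim, so it is not quite the free move you present it as. The paper avoids this altogether by a neat measure--theoretic device: it removes once and for all the (at most countably many) level values $c$ for which $\{|v|=c\}$ has positive measure, setting $Z=\Omega\setminus\bigcup_{c\in J}\{|v|=c\}$. Since $\nabla v=0$ a.e.\ on every level set, restricting the drift integral to $Z$ costs nothing in the key inequality, but makes the map
\[
k\longmapsto \|\nabla u_1+\nabla u_2\|_{L^N(\{|v|\ge k\}\cap Z)}
\]
continuous on $]0,\|v\|_\infty]$ and equal to $0$ at $k=\|v\|_\infty$ \emph{regardless} of whether the top level set has positive measure. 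One then picks $k_0<\|v\|_\infty$ with the coefficient below $1$ and gets the contradiction directly. This is more self--contained than your route and sidesteps the delicate $L^N$--drift maximum principle; it is the main trick worth taking away from the paper's proof.
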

\begin{proof}

Let us assume the existence of two solutions $u_1$, $u_2$ of \eqref{31} in $X\cap W^{1,N}_{loc}(\Omega) \cap C(\overline\Omega)$. Then $v=u_1-u_2$ is a solution of
\begin{equation}
\label{2}
\begin{array}{c}
-\Delta v = \mu(x)(\nabla u_1 + \nabla u_2)\,\nabla v + d(x)v,
\quad  v\in X\cap W^{1,N}_{loc}(\Omega) \cap C(\overline\Omega).
\end{array}
\end{equation}
For every $c\in\mathbb R$, let us consider the set 
$\Omega_c=\{x\in \Omega \, :\, |v(x)|=c\}$ and 
$$
J=\{c\in \mathbb R  \, :\, \mbox{meas\,}\Omega_c>0\}.
$$
As $|\Omega|$ is finite, $J$ is at most countable and, since for all $c\in \mathbb R$, $\nabla v =0$ a.e. on $\Omega_c$,  we also have
\begin{equation}
				\label{mes}
\nabla v=0 \mbox{ a.e. in } \bigcup_{c\in J} \Omega_c.
\end{equation}
Define
$
Z= \Omega \setminus \bigcup_{c\in J} \Omega_c 
$ and let
 $G_k:\mathbb R\to \mathbb R$ be defined by
\begin{equation}
					\label{Gk}
G_k(s)= \left\{ \begin{array}{ll}0,&\mbox{if } |s|\leq k,
\\
(|s|-k)\,\mbox{sgn}(s),&\mbox{if }|s|  > k.
\end{array} \right.
\end{equation}
Now, using
 $\varphi= G_k (v)$ as test function in \eqref{2},
we  deduce  for all $k\geq 0$ that
\begin{eqnarray*}
\displaystyle
\|\nabla  G_k(v)\|_{2}^2 &=& 
				\int_{\Omega}|\nabla v|^2\chi_{\{|v|\geq k\}} \, dx
\\
&=&
\displaystyle
\int_{\Omega}\mu(x) (\nabla u_1+\nabla u_2)\, \nabla v\, G_k (v) \, dx + \int_{\Omega}d(x)\, v\, G_k(v) \, dx .
\end{eqnarray*}
%
Since  $v \in C(\overline{\Omega})$ we have that $G_k(v)$ has a compact support in $\Omega$  for all  $k >0$, which together to the fact that $d(x) \leq 0$ on $\Omega$ and \eqref{mes} implies that 
\arraycolsep1.5pt
\begin{equation}
\label{3bb}
\begin{array}{rcl}
\displaystyle
\|\nabla  G_k(v)\|_{2}^2
&\leq&
\displaystyle
\int_{\Omega}\mu(x) (\nabla u_1+\nabla u_2)\, \chi_{\{|v|\geq  k\}\cap Z}\, \nabla v\, G_k (v) \, dx
\\
&=&
\displaystyle
\int_{\Omega}\mu(x) (\nabla u_1+\nabla u_2)\, \chi_{\{|v|\geq  k\}\cap Z}\, \nabla  G_k (v)\,  G_k (v) \, dx
\\[4mm]
&\leq&
\displaystyle
\|\mu\|_{\infty} \|\nabla u_1+\nabla u_2\|_{L^N(\{|v|\geq k \}\cap Z)}\|\nabla  G_k (v)\|_{2} \| G_k (v)\|_{2^*}
\\[2mm]
&\leq&
\displaystyle
\mathcal{S}^{-1}_N\|\mu\|_{\infty} \|\nabla u_1+\nabla u_2\|_{L^N(\{|v|\geq  k \}\cap Z)}\|\nabla G_k (v)\|_{2}^2,
\end{array}
\end{equation}
\arraycolsep5pt
where  we recall that  $ \mathcal{S}_N$ denotes the Sobolev constant.
\medbreak

Assume by contradiction that $v\not\equiv 0$ and consider the function  $F:  ]0, \| v\|_\infty] \to\mathbb R$ defined by
$$
F(k)= \mathcal S^{-1}_N\|\mu\|_{\infty} \|\nabla u_1+\nabla u_2\|_{L^N(\{|v|\geq k\}\cap Z)}
, \ \ \forall \,  0<k<\| v\|_\infty.
$$
Observe that $F$ is non-increasing with  $F(\| v\|_\infty)=0$. Moreover, by definition of $Z$ we have that $F$ is continuous and we can choose $0<k_0<\| v\|_\infty$ such that
$F(k_0) < 1$. By \eqref{3bb}, $\|\nabla G_{k_0} (v)\|_{2}^2\leq F(k_0) \|\nabla  G_{k_0} (v)\|_{2}^2$, which implies that $\|\nabla G_{k_0} (v)\|_{2}=0$, i.e. $|v|\leq k_0< \| v\|_\infty$, a contradiction proving that necessarily $v=0$ and hence $u_1=u_2$ concluding the proof.
%
%
%
%
%
\end{proof}

\begin{proof}[Proof of Proposition \ref{UUniqueness}.]
This follows directly from  Lemmas \ref{betterreg} and \ref{Colette}.
\end{proof}

\section{Uniform $L^\infty$-estimates and existence of a continuum}
\label{Sectionuniqueness}

As in the previous section, we consider the boundary value problem \eqref{31} under the condition {\rm (A5)}.
\begin{lem}\label{4.2new}
Assume that {\rm (A5)} hold and that \eqref{31} has a solution $u_0\in X$.  Then 
\vspace{1mm}
\begin{enumerate}
\item[i)] For any 
	$\widetilde d(x)\in L^p(\Omega)$, $p>\frac{N}{2}$ with 
	$\widetilde d(x)\leq d(x)$, the problem
	\begin{equation}
	\label{n1}
	-\Delta u=\widetilde d(x)u+\mu(x)|\nabla u|^2 +h(x), \ u\in X
	\end{equation}
	has a unique solution $u\in X$.  Moreover,     
	$u$ satisfies
	$$	
	\|u\|_\infty \leq 2\|u_0\|_\infty.
	$$
\item[ii)] There exists $M_1>0$ such that for any $t\in [0,1]$ any solution
	$u_t$ of 	
	\begin{equation}
	\label{n4}
	-\Delta u = (d(x)-1)u +(1-t)\mu(x)|\nabla u|^2 +h(x), \quad t\in [0,1]
	\end{equation}
	 satisfies
	$\| u_t\|_\infty \leq M_1$.
\end{enumerate}
\end{lem}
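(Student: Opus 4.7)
Part (i). Uniqueness follows at once from Proposition \ref{UUniqueness}, whose hypothesis (A5) holds for $\widetilde d$ because $\widetilde d \in L^p(\Omega)$ with $p > N/2$ and $\widetilde d \leq d \leq 0$. For existence together with the $L^\infty$ bound, my plan is to apply Theorem \ref{sousBo} to the shifted pair
\[
\alpha := u_0 - \|u_0\|_\infty, \qquad \beta := u_0 + \|u_0\|_\infty,
\]
which both lie in $X$ and satisfy $\alpha \leq 0 \leq \beta$ on $\partial\Omega$ and $\alpha \leq u_0 \leq \beta$ pointwise. Using $\nabla \alpha = \nabla u_0 = \nabla \beta$ and substituting $u_0 = \beta - \|u_0\|_\infty$ into the equation satisfied by $u_0$, the super-solution inequality for \eqref{n1} reduces to the pointwise statement $(d - \widetilde d)\beta - d\|u_0\|_\infty \geq 0$, which is immediate from $d - \widetilde d \geq 0$, $\beta \geq 0$, and $-d \geq 0$; the analogous substitution $u_0 = \alpha + \|u_0\|_\infty$ yields $(d - \widetilde d)\alpha + d\|u_0\|_\infty \leq 0$ for the sub-solution. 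Theorem \ref{sousBo} then produces a solution sandwiched between $\alpha$ and $\beta$, so $\|u\|_\infty \leq 2\|u_0\|_\infty$.

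Part (ii). Set $\widehat d := d - 1 \leq -1$; since $\mathrm{meas}(\Omega \setminus \mathrm{Supp}\,\widehat d) = 0$, condition $(\mathrm{H})$ is vacuous for $\widehat d$. I would apply Proposition \ref{prop1}, with constant coefficient $\|\mu\|_\infty > 0$ (the case $\mu \equiv 0$ being immediate), to the two auxiliary problems
\[
-\Delta v = \widehat d(x)\, v + \|\mu\|_\infty |\nabla v|^2 + h^{\pm}(x), \quad v \in X,
\]
obtaining non-negative solutions $\beta^*$ and $\widetilde\alpha^*$. Setting $\alpha^* := -\widetilde\alpha^*$, the two inequalities $(1-t)\mu \leq \|\mu\|_\infty$ and $(1-t)\mu + \|\mu\|_\infty \geq 0$, valid for all $t \in [0,1]$, show that $(\alpha^*, \beta^*)$ is a $t$-independent pair of sub- and super-solutions of \eqref{n4} with $\alpha^* \leq 0 \leq \beta^*$ on $\partial\Omega$. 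Theorem \ref{sousBo} produces a solution $u_t^*$ with $\alpha^* \leq u_t^* \leq \beta^*$, while Proposition \ref{UUniqueness} (whose hypothesis (A5) holds since $\widehat d \leq 0$ and $(1-t)\mu \in L^\infty$) forces every solution of \eqref{n4} to coincide with $u_t^*$. Hence $M_1 := \max(\|\alpha^*\|_\infty, \|\beta^*\|_\infty)$ depends only on $d$, $h$, $\mu$ and provides the required uniform bound.

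The only subtle point, in my view, is the bookkeeping in part (i): because $u_0$ is a priori sign-changing, the shift by $\|u_0\|_\infty$ is the precise device that makes every term in the super- and sub-solution tests carry the correct sign. In part (ii) the homotopy parameter $t$ is absorbed at the outset by the $L^\infty$-norm of $\mu$, so that a single sub/super-solution pair works for all $t\in [0,1]$ and the existence and uniqueness machinery already developed closes the argument.
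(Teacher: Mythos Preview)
Your argument is correct and follows essentially the same approach as the paper's proof: the same shifted pair $u_0\pm\|u_0\|_\infty$ in part~(i), and in part~(ii) the same combination of Proposition~\ref{prop1} (applied with $d-1$, which has full support) plus uniqueness from Proposition~\ref{UUniqueness}. The only cosmetic difference is that the paper builds the upper and lower barriers in~(ii) using $\|\mu^+\|_\infty$ and $\|\mu^-\|_\infty$ separately, whereas you use $\|\mu\|_\infty$ for both; your choice is slightly less sharp but equally valid, and your explicit remark on the degenerate case $\mu\equiv 0$ is a nice touch that the paper leaves implicit.
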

\smallskip

\begin{proof} i)
Let $u_0\in X$ be a solution of \eqref{31} and set
	$$	
	\beta(x)=u_0(x)+\|u_0\|_\infty, \quad \alpha(x)=u_0(x)-\|u_0\|_\infty.
	$$
\vspace{1mm}
Then $\alpha\leq 0\leq \beta$  and, using that  $\widetilde d(x)\leq d(x)\leq 0$, we have
	\begin{eqnarray*}
	-\Delta \beta &=& d(x)(\beta-\|u_0\|_\infty)+\mu(x)|\nabla \beta|^2 +h(x)
	\\
	&=& \widetilde d(x)\beta +\mu(x)|\nabla \beta|^2 +h(x) +(d(x)-\widetilde d(x))\beta-d(x)\|u_0\|_\infty
	\\
	&\geq& \widetilde d(x)\beta +\mu(x)|\nabla \beta|^2 +h(x).
	\end{eqnarray*}
\vspace{1mm}
Thus $\beta$ is an upper solution of \eqref{n1}.  Similarly $\alpha$ is a lower solution of \eqref{n1}.
By Theorem \ref{sousBo}, \eqref{n1} has a solution $u(x)$ satisfying
	$$	
	\alpha(x) \leq u(x) \leq \beta(x) \quad \mbox{in}\ \Omega.
	$$
Since uniqueness of solutions of \eqref{n1} follows from Proposition \ref{UUniqueness}, this concludes the proof of the Point i). 
\medbreak

ii) Since $d(x)\leq 0$,   then  $\mbox{Supp}\, (d(x)-1) =\Omega$ and
thus, by Proposition \ref{prop1}, there exists a non negative solution $\beta$ (resp. $\alpha$) of
	$$	
	-\Delta u=(d(x)-1) u+\|\mu^+\|_\infty |\nabla u|^2+h^+
	$$
(resp.
	$	-\Delta u= (d(x)-1) u+\|\mu^-\|_\infty |\nabla u|^2+ h^-$).
	For any $t\in [0,1]$, we can observe that $\beta$ (resp. $-\alpha$) is an 
  upper (resp. lower) solution
of \eqref{n4}.  Thus there exists a solution $u_t$ of \eqref{n4} satisfying
$-\alpha\leq u_t\leq \beta$.  By Proposition \ref{UUniqueness}, uniqueness of solutions of \eqref{n4} holds
and thus case ii) holds with $M_1=\max(\|\beta\|_\infty, \|\alpha\|_\infty)$.
\end{proof}

We now transform \eqref{31} into a fixed point problem. By Corollary \ref{prop3} used with $c(x) \equiv 1$ and $\lambda=-1$, or alternatively  Theorem 2 of \cite{BoMuPu3}, we know that, for any $f \in L^p(\Omega)$ the problem
\begin{equation}
\label{pivot}
-\Delta u + u -\mu(x) |\nabla u|^{2} = f(x), \quad u \in X
\end{equation}
has a solution.
We also know from Proposition \ref{UUniqueness} that it is unique. Thus it is possible to define
the operator $K^\mu :L^p (\Omega )\longrightarrow X$ by  $K^\mu f=u$ where $u$ is the unique solution  of (\ref{pivot}). 
The following lemma,  which is proved in the Appendix, will be crucial.

\begin{lem}
\label{comp}
If $\mu\in L^{\infty}(\Omega)$ then the operator
$K^\mu$ is a completely continuous operator from $L^p (\Omega)$  into $C(\overline \Omega)$. 
\end{lem}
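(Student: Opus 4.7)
\medbreak
\noindent\textbf{Proof plan for Lemma \ref{comp}.}

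The plan is to deduce both properties of $K^\mu$ (continuity and mapping of bounded sets into relatively compact ones) from a priori estimates in $C^{0,\alpha}(\overline\Omega)$, together with the uniqueness statement of Proposition~\ref{UUniqueness} applied to $d(x)\equiv -1$. First I would show that $K^\mu$ maps bounded subsets of $L^p(\Omega)$ into bounded subsets of $L^\infty(\Omega)$. Given $f\in L^p(\Omega)$, I would apply Proposition~\ref{prop1} with $d\equiv -1$ to produce a non negative solution $\beta\in X$ of
$$
-\Delta \beta + \beta - \|\mu^+\|_\infty |\nabla \beta|^2 = |f(x)|,
$$
and a non negative solution $\alpha^\ast\in X$ of the analogous problem with $\|\mu^-\|_\infty$ in place of $\|\mu^+\|_\infty$. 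A short computation using $-\mu(x)\leq \|\mu^-\|_\infty$ and $\mu(x)\leq \|\mu^+\|_\infty$ shows that $\beta$ and $-\alpha^\ast$ are respectively an upper and a lower solution of \eqref{pivot}, hence by Theorem~\ref{sousBo} combined with Proposition~\ref{UUniqueness} we have $-\alpha^\ast\leq K^\mu f\leq \beta$ pointwise. Tracking the dependence on $\|f\|_p$ through the coercivity and boundedness estimates in the proof of Lemma~\ref{minbounded}, one obtains a continuous function $\Phi$ with $\|K^\mu f\|_\infty\leq \Phi(\|f\|_p)$.

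Once the uniform $L^\infty$ bound is available, I would invoke \cite[Theorem IX.2.2]{LU68} to upgrade to a uniform Hölder estimate: there exist $\alpha\in\,]0,1[$ and a continuous function $\Psi$, depending only on $\|\mu\|_\infty$, $\Omega$ and $p$, such that $\|K^\mu f\|_{C^{0,\alpha}(\overline\Omega)}\leq \Psi(\|f\|_p)$. Together with the compact embedding $C^{0,\alpha}(\overline\Omega)\hookrightarrow C(\overline\Omega)$ (Arzelà–Ascoli), this already proves that $K^\mu$ sends bounded sets of $L^p(\Omega)$ to relatively compact sets of $C(\overline\Omega)$.

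For the continuity of $K^\mu$, I would argue by sequences. Take $f_n\to f$ in $L^p(\Omega)$ and set $u_n = K^\mu f_n$. By the preceding uniform estimates, $\{u_n\}$ is bounded in $C^{0,\alpha}(\overline\Omega)$. From $\{u_n\}$ we can extract a subsequence, not relabeled, converging in $C(\overline\Omega)$ to some $u$, and weakly in $H^1_0(\Omega)$ (testing \eqref{pivot} with a suitable exponential function of $u_n$, in the spirit of Boccardo--Murat--Puel, and using the uniform $L^\infty$ bound yields a uniform $H^1_0$ estimate). The main point is then to pass to the limit in the quadratic gradient term $\mu(x)|\nabla u_n|^2$. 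Following the classical almost-everywhere gradient convergence argument of Boccardo--Murat for equations with natural growth, I would test the equation satisfied by $u_n - u_m$ with an exponential function of $u_n - u_m$ (which is admissible thanks to the uniform $L^\infty$ bound) to absorb the quadratic nonlinearity, thereby obtaining strong convergence of $\{u_n\}$ in $H^1_0(\Omega)$ and $\nabla u_n\to \nabla u$ a.e.\ in $\Omega$. Passing to the limit in the weak formulation shows that $u$ solves \eqref{pivot} with right-hand side $f$, so $u = K^\mu f$ by Proposition~\ref{UUniqueness}. Uniqueness of the limit implies convergence of the whole sequence, proving continuity.

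The main obstacle in the plan is this last step, namely extracting the strong convergence of the gradients needed to pass to the limit in $\mu(x)|\nabla u_n|^2$; the rest is an assembly of already available ingredients (Proposition~\ref{prop1}, Proposition~\ref{UUniqueness}, Theorem~\ref{sousBo}, and the Ladyzhenskaya--Ural'tseva Hölder regularity). The $L^\infty$ bounds are what make the Boccardo--Murat-type test functions of exponential type admissible and allow the quadratic term to be controlled, so the $L^\infty$ estimate of the first step and the gradient convergence argument of the third step are logically intertwined.
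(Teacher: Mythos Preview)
Your plan is correct and matches the paper's architecture: uniform $L^\infty$ bounds, then uniform H\"older bounds and compactness via Arzel\`a--Ascoli, and finally continuity through strong $H^1_0$ convergence obtained with exponential-type test functions \`a la Boccardo--Murat--Puel. The differences are only in implementation. For the $L^\infty$ bound the paper cites \cite[Theorem~1]{BoMuPu3} directly rather than building upper and lower solutions from Proposition~\ref{prop1}; your route works but the ``tracking'' of the dependence on $\|f\|_p$ through Lemmas~\ref{dual1}--\ref{minbounded} is more laborious than you suggest. For the uniform H\"older estimate the paper does \emph{not} simply invoke \cite[Theorem~IX.2.2]{LU68}: it carries out an explicit calculation with the test function $\varphi(G_k(u_n))\zeta^2$ to place $u_n$ in the Ladyzhenskaya--Ural'tseva class $B_2(\Omega,M,\gamma,\delta,\tfrac{1}{2p})$ and then applies \cite[Theorems~II.6.1 and II.7.1]{LU68}; this makes the uniform dependence on $\|f_n\|_p$ transparent, whereas a bare citation of \cite[Theorem~IX.2.2]{LU68} leaves that uniformity to be checked against the theorem's hypotheses. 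For the continuity step the paper compares $u_{n_{k_j}}$ with its weak $H$-limit $u$ (rather than with another $u_m$), testing the equation for $\tilde u_j=u_{n_{k_j}}-u$ with $\tilde u_j\exp(\tilde t\,\tilde u_j^2)$ and using $|\nabla u_{n_{k_j}}|^2\leq 2(|\nabla\tilde u_j|^2+|\nabla u|^2)$; your Cauchy variant works for the same reason once you use that $\|u_n-u_m\|_\infty\to 0$ along the subsequence already converging in $C(\overline\Omega)$ to control the residual term $\int|\nabla u_m|^2|\psi(u_n-u_m)|$.
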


Next we define the continuous operator 
$N:C(\overline \Omega)\longrightarrow  L^p(\Omega )$  by, 
$$
N(u) =  (d(x) +1)\, u + h(x), \quad \mbox{ for any } u\in  C(\overline \Omega). 
$$
With these notations, $u\in  C(\overline \Omega)$ is a  solution of \eqref{31} if and only if
$u$ is a fixed point of $K^{\mu}\circ N$; i.e., if and only if 
 $$
u= K^{\mu}( N(u)).
$$

Now let $T : C(\overline \Omega) \to C(\overline \Omega)$ be given  by $T=K^{\mu}\circ N$. The following result hold.

\begin{prop}\label{4.1new}
Assume that {\rm (A5)}  holds and that \eqref{31} has a solution $u_0 \in X$.  Then
$$
	i(I-T,u_0)=1.
$$
\end{prop}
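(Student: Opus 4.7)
The plan is to compute $i(I-T,u_0)$ by a two-stage homotopy deforming $T$ to an affine compact operator whose index at its unique fixed point can be read off directly. Throughout, I would rely on Proposition \ref{UUniqueness} to guarantee uniqueness of fixed points along every slice of the homotopy and on the a priori bounds of Lemma \ref{4.2new} to keep all fixed points strictly inside a fixed large ball $B(0,R)$, so that the Leray--Schauder degree on that ball equals the desired index $i(I-T,u_0)=\deg(I-T,B(0,R),0)$.

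For the first stage I would introduce $T_s(u):=K^{\mu}\bigl((d(x)+1-s)u+h(x)\bigr)$ for $s\in[0,1]$, whose fixed points are exactly the solutions of
\begin{equation*}
-\Delta u=(d(x)-s)u+\mu(x)|\nabla u|^2+h(x),
\end{equation*}
with $T_0=T$. Since $d(x)-s\leq d(x)$, Lemma \ref{4.2new}~i) applied with $\widetilde d=d-s$ provides existence, uniqueness, and the uniform bound $\|u_s\|_\infty\leq 2\|u_0\|_\infty$. Choosing $R>2\|u_0\|_\infty$, homotopy invariance yields $i(I-T,u_0)=\deg(I-T_1,B(0,R),0)$.

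For the second stage I would set $\widetilde T_t(u):=K^{(1-t)\mu}\bigl(d(x)u+h(x)\bigr)$ for $t\in[0,1]$, whose fixed points solve
\begin{equation*}
-\Delta u=(d(x)-1)u+(1-t)\mu(x)|\nabla u|^2+h(x),
\end{equation*}
with $\widetilde T_0=T_1$. Lemma \ref{4.2new}~ii) provides the uniform bound $\|u_t\|_\infty\leq M_1$, and Proposition \ref{UUniqueness} (applied with $d-1\leq 0$ and $(1-t)\mu\in L^\infty(\Omega)$) gives uniqueness along each slice. Enlarging $R$ if needed so that $R>M_1$, homotopy invariance gives $\deg(I-T_1,B(0,R),0)=\deg(I-\widetilde T_1,B(0,R),0)$. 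At the endpoint $\widetilde T_1(u)=Lu+K^0(h)$ is affine, with $Lv:=K^0(d(x)v)=(-\Delta+I)^{-1}(d(x)v)$ linear and compact. If $Lv=\lambda v$ for some $\lambda>0$ and $v\not\equiv 0$, then $-\Delta v+(1-d(x)/\lambda)v=0$; since $d\leq 0$ the coefficient is $\geq 1$, forcing $v\equiv 0$ by coercivity. Hence $L$ has no positive real eigenvalues, $I-L$ is invertible, and translating by the unique fixed point of $\widetilde T_1$ reduces the degree to that of $I-L$ on a ball around the origin, which by the classical formula equals $(-1)^{0}=1$. Chaining the three equalities yields $i(I-T,u_0)=1$.

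The principal technical point will be the joint continuity and compactness of the homotopies $(s,u)\mapsto T_s(u)$ and $(t,u)\mapsto \widetilde T_t(u)$. For the first stage this is immediate from Lemma \ref{comp} since $\mu$ is held fixed. The second stage, however, requires a version of Lemma \ref{comp} with continuous dependence on the coefficient of $|\nabla v|^2$, i.e.\ that $(t,f)\mapsto K^{(1-t)\mu}(f)$ is completely continuous from $[0,1]\times L^p(\Omega)$ to $C(\overline\Omega)$. I would establish this by adapting the proof of Lemma \ref{comp}: given sequences $t_n\to t$ and $f_n\to f$ in $L^p(\Omega)$, a priori estimates on $v_n:=K^{(1-t_n)\mu}(f_n)$ together with Proposition \ref{UUniqueness} identifying the limit force $v_n\to K^{(1-t)\mu}(f)$ in $C(\overline\Omega)$.
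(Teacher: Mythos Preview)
Your proposal is correct and follows essentially the same two-stage homotopy as the paper: first deform $d$ to $d-1$ via $T_s$, then deform $\mu$ to $0$ via $\widetilde T_t$, using Lemma~\ref{4.2new} for the a priori bounds and Proposition~\ref{UUniqueness} for uniqueness along each slice. You actually supply more detail than the paper does, both in justifying $\deg(I-K^0\circ\widetilde N_1,B(0,R),0)=1$ via the spectral argument for the affine endpoint and in flagging the need for joint continuity of $(t,f)\mapsto K^{(1-t)\mu}(f)$, which the paper uses implicitly.
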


\begin{proof}
To show the proposition, we use homotopy arguments.  We consider  two  one-parameter problems, namely the problem \eqref{n4} with $t\in [0,1]$ and the following one  
\begin{equation}
\label{n3}
	-\Delta u = (d(x)-s)u +\mu(x)|\nabla u|^2 +h(x), \quad u\in X,	
\end{equation}
for $s\in [0,1]$.
Applying Lemma \ref{4.2new} we deduce that
\vspace{1mm}
\begin{enumerate}
\item Any solution
$u_s(x)$ of \eqref{n3} with $s\in [0,1]$ satisfies
	$	\| u_s\|_\infty \leq 2\|u_0\|_\infty 
	$. (Case  i) with $\widetilde d(x)=d(x)-s$).
	\vspace{1mm}
\item There exists $M_1>0$ such that for any $t\in [0,1]$ any solution
$u_t(x)$ of \eqref{n4} satisfies
	$	\| u_t\|_\infty \leq M_1
	$. (Case ii)).
\end{enumerate}
%
%
%
%
%
%
%
%
%
\vspace{1mm}
Observe that, if we set
	$$	\widetilde N_s(u)= (d(x)+1-s)u+h(x),
	$$
then  problem \eqref{n3} (resp. problem \eqref{n4}) is equivalent to $u-K^\mu(\widetilde N_s(u))=0$ (resp.
$u-K^{(1-t)\mu}(\widetilde N_1(u))=0$).  Thus setting $M=\max(2\|u_0\|_\infty, M_1)$, we have, for all $s$, $t\in [0,1]$ and all $u\in C(\overline\Omega)$ with $\|u\|_\infty=M$,
\vspace{1mm}
	$$	
	u-K^\mu(\widetilde N_s(u))\not=0, \quad u-K^{(1-t)\mu}(\widetilde N_1(u))\not=0.	
	$$
	\vspace{1mm}
Therefore, by homotopy invariance of the degree, we obtain
	\begin{eqnarray*}
	 \mbox{deg}(I-T,B(0,M),0) &=& \mbox{deg}(I-K^\mu\circ \widetilde N_0,B(0,M),0) 
	 \\
	&=& \mbox{deg}(I-K^\mu\circ \widetilde N_1,B(0,M),0) 
	\\
	&=& \mbox{deg}(I-K^0\circ \widetilde N_1,B(0,M),0) 
	=1.
	\end{eqnarray*}
By Proposition \ref{UUniqueness}, $u_0$ is the unique solution of \eqref{31} and thus
	$$	
	i(I-T,u_0)=\mbox{deg}(I-T,B(0,M),0) =1. 
	$$
\end{proof}



In the rest of the section, we apply the above results to the problem $(P_\lambda)$. First, 
from Lemma \ref{4.2new} we directly obtain the following a priori estimates for $(P_{\lambda})$ with $\lambda <0$.

\begin{cor}
\label{corA}
Assume {\rm (A1)} and, if $\mbox{\rm meas}(\Omega\setminus\mbox{\rm Supp}\, c)>0$, assume also that {\rm (Hc)} holds.
Then for any $\lambda_0<0$ there exists $R=R(\lambda_0)>0$ such that, for all $\lambda\leq \lambda_0$,  the unique solution
$u_\lambda$ of $(P_\lambda)$ satisfies
	$$	
	\|u_\lambda\|_\infty \leq R.
	$$
\end{cor}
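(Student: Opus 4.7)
The plan is to reduce to Lemma \ref{4.2new}\,i) by fixing a reference level $\lambda_0<0$ and comparing every $\lambda\leq \lambda_0$ against it. The key observation is that on the axis $\lambda < 0$, the coefficient of $u$ in $(P_\lambda)$ is $\lambda c(x)$, which is $\leq 0$ since $c\gneqq 0$, and it is \emph{monotonically decreasing} in the pointwise sense as $\lambda$ decreases (again because $c\geq 0$). So the family $(P_\lambda)_{\lambda\leq \lambda_0}$ fits exactly the framework of Lemma \ref{4.2new}\,i), with $d(x):=\lambda_0 c(x)$ playing the role of the reference coefficient.

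First I would fix $\lambda_0<0$ and set $d(x)=\lambda_0 c(x)$. Under (A1) and the additional assumption (Hc) when $\mathrm{meas}(\Omega\setminus\mathrm{Supp}\, c)>0$, Corollary \ref{prop3} provides a solution of $(P_{\lambda_0})$, which by Proposition \ref{UUniqueness} is the unique solution $u_{\lambda_0}\in X$. Since $c\gneqq 0$ and $\lambda_0<0$, the function $d$ satisfies $d\leq 0$ and $d\in L^p(\Omega)$ with $p>N/2$, so assumption (A5) holds and $u_{\lambda_0}$ is a solution of \eqref{31} for this choice of $d$.

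Next, for any $\lambda\leq \lambda_0$, define $\widetilde d(x)=\lambda c(x)$. Then $\widetilde d\in L^p(\Omega)$, and $\widetilde d(x)-d(x)=(\lambda-\lambda_0)c(x)\leq 0$ a.e.\ in $\Omega$, so $\widetilde d\leq d$ pointwise. The problem $(P_\lambda)$ is exactly \eqref{n1} with this $\widetilde d$. Applying Lemma \ref{4.2new}\,i) with $u_0=u_{\lambda_0}$ yields existence and uniqueness of a solution $u_\lambda\in X$ of $(P_\lambda)$ together with the estimate
$$
\|u_\lambda\|_\infty \leq 2\|u_{\lambda_0}\|_\infty.
$$
Setting $R(\lambda_0):=2\|u_{\lambda_0}\|_\infty$, which is finite and depends only on $\lambda_0$, gives the claim.

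There is essentially no obstacle: the entire content of the corollary is already packaged in Lemma \ref{4.2new}\,i); the only thing to verify is that for $\lambda\leq \lambda_0<0$ the coefficient $\lambda c(x)$ dominates $\lambda_0 c(x)$ from below, which is immediate from $c\geq 0$. Uniqueness of $u_\lambda$, needed to speak of ``the'' unique solution in the statement, is provided by Proposition \ref{UUniqueness} applied with $d(x)=\lambda c(x)\leq 0$.
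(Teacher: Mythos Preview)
Your proof is correct and essentially identical to the paper's own argument: fix $d(x)=\lambda_0 c(x)$, use Corollary~\ref{prop3} and Proposition~\ref{UUniqueness} to obtain the unique solution $u_{\lambda_0}$, and then apply Lemma~\ref{4.2new}\,i) with $\widetilde d(x)=\lambda c(x)\leq d(x)$ to get $\|u_\lambda\|_\infty\leq 2\|u_{\lambda_0}\|_\infty=:R(\lambda_0)$.
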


\begin{proof}
The existence and uniqueness of solutions of $(P_\lambda)$ when $\lambda<0$, is already known from  Corollary \ref{prop3} and Proposition \ref{UUniqueness}.
Now the $L^\infty$-bound  is obtained from Lemma \ref{4.2new}, Point i) used with $d(x)=\lambda_0 c(x)$ and  $\widetilde d(x)
=\lambda c(x)$.  That is, the conclusion holds with $R(\lambda_0)=2\|u_{\lambda_0}\|_\infty$.
\end{proof}

\begin{remark}
A direct consequence of Corollary \ref{corA} is that none of $\lambda\in ]-\infty,0[$ is a bifurcation point from infinity of  $(P_\lambda)$.  (Recall that $\lambda \in \R$ is called a bifurcation point from infinity of $(P_\lambda)$
if there exists a sequence $\{ u_n\}$ of solutions of $(P_{\lambda_n})$ with $\lambda_n \to \lambda $  and $||u_n||_\infty\to\infty$).
\end{remark}

\section{Behaviour of the continuum in the half space $\{\lambda >0 \} \times C(\overline \Omega)$ }\label{Section2}

As a first consequence of  $\mathrm{(A2)}$ we obtain  the following result.

\begin{lem}
\label{l1}
Assume that $\mathrm{(A2)}$ holds. For $\gamma_1 >0$, the first eigenvalue of \eqref{eigenvaluep}, we have
\begin{enumerate}
\item[1)] If $\lambda <\gamma_1$,  any solution of problem $(P_{\lambda})$ is non negative. 
\smallskip
\item[2)] If $\lambda  =\gamma_1$, problem $(P_{\lambda})$  has no solution.
\smallskip
\item[3)] If $\lambda  > \gamma_1$, problem $(P_{\lambda})$  has no non negative solutions.
\end{enumerate}
\end{lem}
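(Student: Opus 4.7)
The plan is to prove the three statements separately, using testing against $u^-$ for statement (1) and testing against the first positive eigenfunction $\varphi_1$ of \eqref{eigenvaluep} for statements (2) and (3). Recall from standard spectral theory that under $\mathrm{(A2)}$ the problem \eqref{eigenvaluep} admits a first eigenvalue $\gamma_1 > 0$ with an associated eigenfunction $\varphi_1 \in H^1_0(\Omega)$ which, by bootstrap regularity (using $c \in L^p(\Omega)$, $p > N/2$) and the strong maximum principle applied to $-\Delta \varphi_1 = \gamma_1 c(x)\varphi_1 \geq 0$, satisfies $\varphi_1 \in L^\infty(\Omega)$ and $\varphi_1 > 0$ in $\Omega$. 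Thus $\varphi_1$ is admissible as a test function in the weak formulation of $(P_\lambda)$.

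For statement (1), I would take a solution $u$ of $(P_\lambda)$ with $\lambda < \gamma_1$ and use $u^-\in H_0^1(\Omega)\cap L^\infty(\Omega)$ as test function. Using $\nabla u \cdot \nabla u^- = -|\nabla u^-|^2$, $u u^- = -(u^-)^2$ and $|\nabla u|^2 u^- = |\nabla u^-|^2 u^-$, together with $\mu(x) \geq \mu_1 > 0$ and $h \geq 0$, one obtains
\begin{equation*}
\int_\Omega |\nabla u^-|^2 \, dx + \int_\Omega \mu(x) |\nabla u^-|^2 u^- \, dx + \int_\Omega h u^- \, dx = \lambda \int_\Omega c(x)(u^-)^2 \, dx,
\end{equation*}
hence $\int_\Omega |\nabla u^-|^2\,dx \leq \lambda \int_\Omega c(x)(u^-)^2\,dx$. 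If $\int_\Omega c(x)(u^-)^2\,dx > 0$, the Rayleigh characterization $\gamma_1 = \inf\{\int|\nabla\phi|^2/\int c\phi^2\}$ yields $\gamma_1 \leq \lambda$, a contradiction. Otherwise $\int_\Omega c(u^-)^2\,dx = 0$ forces $\int |\nabla u^-|^2\,dx \leq 0$, so $u^- \equiv 0$ and $u \geq 0$.

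For statements (2) and (3), the key computation is to test $(P_\lambda)$ against $\varphi_1$ and to test the eigenvalue equation against $u$; subtracting the two identities yields
\begin{equation*}
(\gamma_1 - \lambda) \int_\Omega c(x) u \varphi_1 \, dx = \int_\Omega \mu(x) |\nabla u|^2 \varphi_1 \, dx + \int_\Omega h(x) \varphi_1 \, dx.
\end{equation*}
Since $h \gneqq 0$, $\varphi_1 > 0$ in $\Omega$ and $\mu_1 > 0$, the right-hand side is strictly positive. If $\lambda = \gamma_1$, the left-hand side vanishes, a contradiction proving (2). If $\lambda > \gamma_1$ and $u \geq 0$, then the left-hand side is $\leq 0$ (using $c, u, \varphi_1 \geq 0$), again contradicting the positivity of the right-hand side; this proves (3).

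The main technical point is ensuring that $\varphi_1$ is a legitimate test function (i.e.\ belongs to $H^1_0(\Omega) \cap L^\infty(\Omega)$) and is strictly positive throughout $\Omega$, which follows from standard elliptic regularity applied to the weighted eigenvalue problem with a weight in $L^p$, $p > N/2$. The remainder of the argument is an elementary manipulation of the weak formulation, so I do not anticipate any genuine obstacle beyond justifying these preliminary properties of $\varphi_1$.
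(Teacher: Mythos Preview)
Your proposal is correct and follows essentially the same approach as the paper's proof: testing against $u^-$ for Point 1) and against the positive first eigenfunction $\varphi_1$ for Points 2) and 3), arriving at the identical key identity $(\gamma_1-\lambda)\int_\Omega c(x)u\varphi_1\,dx = \int_\Omega \mu(x)|\nabla u|^2\varphi_1\,dx + \int_\Omega h(x)\varphi_1\,dx$. Your version is slightly more explicit in justifying that $\varphi_1\in L^\infty(\Omega)$ (hence admissible as a test function) and in handling the case split $\int_\Omega c(u^-)^2>0$ versus $=0$ in Point 1), but otherwise the arguments coincide.
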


\begin{proof}
First we assume that  $\lambda <\gamma_1$. Let $u \in X$ be a solution of $(P_{\lambda})$. Using $u^-$ as test function in $(P_{\lambda})$  we obtain 
$$
 -  \int_{\Omega} (|\nabla u^-|^2 - \lambda c(x) |u^-|^2) dx  = \int_{\Omega} (\mu(x) |\nabla u|^2 u^- + h(x) u^-) dx.
$$
Since $\lambda <\gamma_1$ the left hand side is negative and since $\mu(x) \geq 0$ and $h(x) \geq 0$ the right hand side positive. So necessarily $u^- \equiv 0$ i.e., $u\geq 0$. This proves Point 1). \medskip

Now let $u\in X$ be a solution of 
$(P_{\lambda})$. Using $\varphi_1 >0$, the first eigenfunction of \eqref{eigenvaluep}, as test function in $(P_{\lambda})$ we obtain
\begin{eqnarray*}
(\gamma_{1} -\lambda ) \int_{\Omega}  c(x) u\varphi_{1} dx
&=& 
\int_{\Omega} \nabla u \nabla \varphi_{1} dx - \int_{\Omega} \lambda  c(x) u\varphi_{1}dx
\\
&=&
 \int_{\Omega} \mu(x) |\nabla u|^{2}\varphi_{1} dx + \int_{\Omega} h(x) \varphi_{1} dx .
\end{eqnarray*}
Since $\mu (x)\geq 0$ and $h(x)\gneqq 0$, the right hand-side of the above inequality is  positive. Thus when $\lambda = \gamma_1$, $(P_{\lambda})$ has no  solution and Point 2) is proved.

 Finally, when $\lambda > \gamma_1$ and $u\in H$ is a non negative solution of 
$(P_{\lambda})$, the left hand-side is non positive which contradicts the positivity of  the right hand side. This  proves Point 3).
\end{proof}



To prove the second part of Theorem \ref{th3}, the key point is the derivation of a priori bounds for solution of $(P_{\lambda})$ for $\lambda >0$. Actually we derive these bounds under a slightly more general assumption than needed. \medskip

We consider the problem
\begin{equation*}
\label{EE1}
-\Delta u = \lambda c(x)u+ H(x, \nabla u), \quad u \in X,
\eqno{(R_{\lambda})}
\end{equation*}
where we assume
$$
\leqno{\mathbf{(A6)}} \hspace{1cm}
\left\{ 
\begin{array}{c} c\gneqq 0  \mbox{ and } c  \mbox{ belong to }  L^p(\Omega) \quad \mbox{for some } p > \frac{N}{2}\\ \\
\mu_1 [|\xi|^2 +h(x)] \leq H(x, \xi) \leq \mu_2 [|\xi|^2 + h(x)] \\\\
     \mbox{ for some } 0 < \mu_1 \leq \mu_2 < \infty \mbox{ and } h \geq 0 \mbox{ with } h \in L^p(\Omega).  
   \end{array} 
\right.
$$

Adapting the approach of \cite{BT77}, we prove the following result: 

\begin{prop}
\label{bounds1}
Assume that $\mathrm{(A6)}$ holds.  Then for any $\Lambda_1>0$ there exists a constant $M >0$ such that, 
for each $\lambda\geq \Lambda_1$, any non negative solution $u$ of $(R_{\lambda})$ satisfies
$$
 ||u||_{\infty} \leq M.
$$
\end{prop}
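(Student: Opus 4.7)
\medskip

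\noindent\textbf{Proof proposal.}
The plan is to adapt the Brezis--Turner strategy \cite{BT77} by first converting the quasilinear inequality into a semilinear one with a super-linear zero-order term via a Cole--Hopf type change of variables, then extracting a weighted integral estimate from the first eigenfunction of the weighted eigenvalue problem \eqref{eigenvaluep}, and finally bootstrapping to an $L^\infty$ bound.

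Fix $\lambda \geq \Lambda_1 > 0$ and let $u \geq 0$ be a solution of $(R_\lambda)$. Using only the lower bound in $\mathrm{(A6)}$ one has $-\Delta u \geq \lambda c(x) u + \mu_1 |\nabla u|^2 + \mu_1 h(x)$. Setting
$v := \frac{1}{\mu_1}\bigl(e^{\mu_1 u}-1\bigr)$, so that $v \geq 0$, $v \in X$ whenever $u \in X$, and $u=\frac{1}{\mu_1}\log(1+\mu_1 v)$, a direct computation gives
$-\Delta v = e^{\mu_1 u}\bigl(-\Delta u - \mu_1 |\nabla u|^2\bigr) \geq e^{\mu_1 u}\bigl(\lambda c(x) u + \mu_1 h(x)\bigr)$, and discarding the non-negative $h$-term yields
\begin{equation*}
-\Delta v \,\geq\, \frac{\lambda}{\mu_1}\, c(x)\,(1+\mu_1 v)\,\log(1+\mu_1 v) \quad \text{in } \Omega,\qquad v=0 \text{ on } \partial\Omega.
\end{equation*}
Since the right-hand side is super-linear in $v$, this inequality is the engine of the argument.

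Let $\varphi_1 \in H_0^1(\Omega) \cap L^\infty(\Omega)$ be the positive first eigenfunction associated with $\gamma_1$, i.e.\ $-\Delta\varphi_1 = \gamma_1 c(x)\varphi_1$; its existence and positivity under $c\gneqq 0$, $c\in L^p$ with $p>N/2$ is standard. Using $\varphi_1$ as a test function in the weak formulation of the above inequality and symmetrizing,
\begin{equation*}
\gamma_1 \int_\Omega c(x)\, v\, \varphi_1\, dx \,=\, \int_\Omega \nabla v \cdot \nabla \varphi_1\, dx \,\geq\, \frac{\lambda}{\mu_1} \int_\Omega c(x)\,(1+\mu_1 v)\log(1+\mu_1 v)\,\varphi_1\, dx.
\end{equation*}
Choose $A=A(\Lambda_1)$ so large that $\lambda\log(1+\mu_1 A) \geq 2\gamma_1$ for every $\lambda \geq \Lambda_1$. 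Splitting the super-linear integral over $\{v\geq A\}$ and $\{v<A\}$ and exploiting $(1+\mu_1 v)\log(1+\mu_1 v)\geq \mu_1 v\log(1+\mu_1 A)$ on $\{v\geq A\}$, a standard absorption argument furnishes
\begin{equation*}
\int_\Omega c(x)\,v\,\varphi_1\, dx \,\leq\, C_0,\qquad \int_\Omega c(x)\,(1+\mu_1 v)\log(1+\mu_1 v)\,\varphi_1\, dx \,\leq\, C_0,
\end{equation*}
with $C_0 = C_0(\Lambda_1, \mu_1, \gamma_1, \|c\|_p, \|\varphi_1\|_{p'})$ independent of the particular solution $u$ and of $\lambda\geq\Lambda_1$.

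From this weighted $L\log L$ estimate I would now climb, along Brezis--Turner lines, to an $L^s(\Omega)$ estimate on $v$ for some $s>N/2$. Using that $\varphi_1$ behaves like $\mathrm{dist}(\cdot,\partial\Omega)$ near $\partial\Omega$ (Hopf) and the Hardy--Sobolev inequality $\|w/\delta^{1/2}\|_2 \leq C\|\nabla w\|_2$ for $w\in H_0^1$, one converts the weighted integral bound on the super-linear source into an $L^q$-bound on the source term of $-\Delta v$, applies $W^{2,q}$ regularity on the semilinear inequality for $v$, and iterates until $s > N/2$ is reached. Since $u = \frac{1}{\mu_1}\log(1+\mu_1 v)$ and $v$ is then bounded in $L^s$ with $s > N/2$, the upper bound in $\mathrm{(A6)}$ recast as $-\Delta u \leq \lambda c(x) u + \mu_2|\nabla u|^2 + \mu_2 h(x)$ together with the Ladyzhenskaya--Ural'tseva $L^\infty$ estimates for equations of natural growth (\cite[Ch.~IV]{LU68}) yields the uniform bound $\|u\|_\infty \leq M$.

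The main obstacle is the bootstrapping from the weighted $L\log L$ estimate to a genuine $L^s$ bound with $s > N/2$: since $c$ is only assumed to be non-negative and non-zero (it may vanish on a set of positive measure), the ``effective weight'' $c\,\varphi_1$ degenerates in $\{c=0\}$, so one cannot simply invoke Hardy--Sobolev with $\delta$ as weight. The bootstrap therefore has to exploit the elliptic nature of $-\Delta v$ on the whole of $\Omega$ rather than only the pointwise information on $\{c>0\}$: one uses the integral inequality on $\{v\geq A\}$, Stampacchia-type truncations, and the super-linear character of the right-hand side to gain integrability in each step. The uniformity in $\lambda \geq \Lambda_1$ is guaranteed at every stage because all constants generated by this procedure depend on $\lambda$ only through the lower bound $\Lambda_1$.
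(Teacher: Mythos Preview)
Your starting step is the right one and coincides with the paper's Step~1: with the Cole--Hopf change $v=\frac{1}{\mu_1}(e^{\mu_1 u}-1)$ you obtain the super-solution inequality $-\Delta v \geq A_1(1+\mu_1 v)[c(x)g_1(v)+h(x)]$, and testing with $\varphi_1$ gives the weighted $L\log L$ bound. The difficulty is everything that comes after.

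The genuine gap is the passage from the weighted estimate to an $L^s$ (or $H^1$) bound on $v$. You only have a \emph{super-solution} inequality for $v$; testing it with $v$ produces $\|\nabla v\|_2^2 \geq \int(\cdots)$, which is the wrong direction. ``$W^{2,q}$ regularity on the semilinear inequality'' does not hold for one-sided inequalities, and Stampacchia truncations of a super-solution do not control $\{v\geq k\}$ from above. In short, from $-\Delta v\geq g\geq 0$ you cannot extract an upper bound on $v$, regardless of how much information you have on $\int c\,v\,\varphi_1$. The degeneracy of $c$ that you flag is a symptom of this, not the root cause.

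The paper's device, which you are missing, is to introduce a \emph{second} Cole--Hopf transform $w_2=\frac{1}{\mu_2}(e^{\mu_2 u}-1)$ using the \emph{upper} bound in $\mathrm{(A6)}$, so that $-\Delta w_2\leq A_2(1+\mu_2 w_2)[c(x)g_2(w_2)+h(x)]$. The two transforms are linked by the exact identity $1+\mu_1 v=(1+\mu_2 w_2)^{1-\theta}$ with $\theta=(\mu_2-\mu_1)/\mu_2\in[0,1[$, which converts your weighted bound into $\int (1+\mu_2 w_2)^{1-\theta}[c g_2(w_2)+h]\varphi_1\leq C$. Now one tests the \emph{sub-solution} inequality with $w_2$, applies H\"older with exponents $\alpha,1-\alpha$ to insert $\varphi_1^\alpha/(1+\mu_2 w_2)^{\theta\alpha}$, and uses a Brezis--Turner lemma of the form $\|(c+h)^{1/q}w/\varphi_1^\tau\|_q\leq C\|\nabla w\|_2$ for suitable $q,\tau$ to close the estimate $\|\nabla w_2\|_2^2\leq C\|\nabla w_2\|_2^{q(1-\alpha)}+C'$ with $q(1-\alpha)<2$. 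The $L^\infty$ bound then follows from the sub-solution inequality for $w_2$ and a linear Trudinger-type estimate, not from a natural-growth estimate on $u$ itself. Your plan to invoke the upper bound only at the very last $L^\infty$ step, after an $L^s$ bound on $v$, comes too late: the upper bound is what makes the energy estimate possible in the first place.
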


\noindent In the proof of Proposition \ref{bounds1} the following two technical 
 lemmas will be useful.

\begin{lem}\label{tec1}
Let $p > \frac{N}{2}$ and $\theta \in\, ]0,1[$. There exist  $\alpha$, $r \in\, ]0,1[$ such that, if we define
    \begin{equation}\label{a0}
    q =1+ r + \frac{1+ \theta \alpha}{1- \alpha}, \quad \tau = \frac{1}{q} \, \frac{\alpha}{1- \alpha}
    \end{equation}
then it holds
\begin{equation}
\label{boundsq}
\frac{1}{p} \leq q \leq \frac{2N(p-1)}{p(N-2 + 2 \tau)}
\end{equation}
and
\begin{equation}
\label{boundsalpha}
1 - \alpha < \frac{2}{q}.
\end{equation}
\end{lem}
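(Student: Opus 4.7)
\medskip

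\noindent\textbf{Proof plan.}
The statement is a purely algebraic/continuity exercise: I need to find $\alpha, r \in (0,1)$ simultaneously satisfying \eqref{boundsq} and \eqref{boundsalpha}. My strategy is to first examine the degenerate limit $(\alpha,r) \to (0^+,0^+)$, check which inequalities hold strictly there, and then compensate for the ones that are sharp by choosing $r$ small relative to $\alpha$ in a precise way.

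First I would observe that in the limit $(\alpha,r)\to(0^+,0^+)$ the definitions \eqref{a0} give $q\to 2$ and $\tau\to 0$. Plugging these values into \eqref{boundsq}, the lower bound $1/p \leq 2$ is immediate since $p > N/2 \geq 3/2$, and the upper bound becomes $2 \leq \tfrac{2N(p-1)}{p(N-2)}$, which a direct cross-multiplication shows to be equivalent to $p \geq N/2$. By hypothesis $p > N/2$, so this upper bound holds with \emph{strict} inequality in the limit. Hence, by continuity of $q$ and $\tau$ in $(\alpha,r)$ on $(0,1)^2$, condition \eqref{boundsq} will continue to hold for all sufficiently small $\alpha,r>0$.

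The main obstacle is \eqref{boundsalpha}, because in the limit it becomes $1 \leq 1$, i.e.\ it is sharp and will be violated unless $\alpha$ and $r$ are chosen in a correlated way. The key computation is to rewrite $q$ as
\begin{equation*}
q = 2 + r + \frac{(1+\theta)\alpha}{1-\alpha},
\end{equation*}
so that
\begin{equation*}
q(1-\alpha) = 2 + r + (\theta - 1 - r)\alpha.
\end{equation*}
Condition \eqref{boundsalpha} is equivalent to $q(1-\alpha) < 2$, which after simplification reads
\begin{equation*}
r < (1 - \theta + r)\,\alpha.
\end{equation*}
Since $\theta < 1$, I can fix any constant $\beta \in (0, 1-\theta)$ and couple $r$ and $\alpha$ by the relation $r = \beta\alpha$; for sufficiently small $\alpha>0$ this forces $\beta < 1 - \theta + r$, hence \eqref{boundsalpha} holds.

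Finally, with this coupling $r = \beta\alpha$, as $\alpha\to 0^+$ we still have $q\to 2$ and $\tau\to 0$, so the strict inequalities obtained above for \eqref{boundsq} remain valid. Choosing $\alpha>0$ small enough so that all perturbation errors are absorbed in the strict inequalities for \eqref{boundsq}, and then setting $r := \beta\alpha$, yields a pair $(\alpha,r) \in (0,1)^2$ for which both \eqref{boundsq} and \eqref{boundsalpha} hold, completing the proof.
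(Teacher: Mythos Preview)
Your proof is correct and follows essentially the same approach as the paper: both observe that \eqref{boundsq} holds with strict inequality in the limit $(\alpha,r)\to(0,0)$ (using $p>N/2$), while \eqref{boundsalpha} is the delicate constraint requiring $r$ to be small relative to $\alpha$. The only cosmetic difference is that the paper first fixes $\alpha$ and then shrinks $r$ (showing $2/q\nearrow \frac{2(1-\alpha)}{2-\alpha+\theta\alpha}>1-\alpha$ as $r\to 0$), whereas you make the dependence explicit via the coupling $r=\beta\alpha$ with $\beta<1-\theta$; the underlying computation $q(1-\alpha)=2+r+(\theta-1-r)\alpha$ is the same in both arguments.
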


\begin{proof}
First observe that for all $\alpha \in \,]0,1[$, there exists $r_0>0$ such that, for any $0 < r \leq r_0$, (\ref{boundsalpha}) holds true. Indeed, since $r >0$, we have
$$
q > 1 + \frac{1+ \theta \alpha}{1 - \alpha} = \frac{2 - \alpha + \theta \alpha}{1 - \alpha} 
\quad \mbox{ or equivalently } \quad 
\frac{2}{q} < \frac{2(1- \alpha)}{2 - \alpha + \theta \alpha}.
$$
Also letting $r \to 0^+$ we obtain
$$ \frac{2}{q} \nearrow \frac{2(1- \alpha)}{2 - \alpha + \theta \alpha}.$$
Thus if 
\begin{equation}
\label{onalpha}
1 - \alpha < \frac{2(1- \alpha)}{2 - \alpha + \theta \alpha}
\end{equation}
there exists $r_0 >0$ such that, for all $0 < r \leq r_0$, (\ref{boundsalpha}) is satisfied. But (\ref{onalpha}) is equivalent to $\alpha (\theta-1) < 0$ which is always true. \medskip

\noindent Now, observe that, from the definition of $q$, we have $q \searrow 2$ as $r \searrow 0$ and $\alpha \searrow 0$. Finally, we see from the definition of $\tau$, that $\tau \searrow 0$ as $\alpha \searrow 0$. Thus as $\alpha \searrow 0$,
$$ 
\frac{2N(p-1)}{p(N-2 + 2 \tau)} \nearrow \frac{2N(p-1)}{p(N-2 )} >2,
$$
where the inequality is obtained using the assumption that $p > \frac{N}{2}.$ At this point it is clear that taking $r >0$ sufficiently close to $0$ and $\alpha >0$ sufficiently close to $0$, that (\ref{boundsq}) will also hold.
\end{proof}

%

\begin{lem}
\label{tec2}
Let $b \in L^p(\Omega)$ with $p > \frac{N}{2}$. For any $p$, $q\geq 1$ and $\tau\in [0,1]$ satisfying \eqref{boundsq}, there exists $C>0$ such that,
for all $w\in H$
    $$  
		\left\|\frac{b^{1/q} w}{\varphi_1^\tau}\right\|_q \leq C\| b\|_p \|\nabla w\|_2,
    $$
		where $\varphi_1 >0 $ denotes the first eigenfunction of \eqref{eigenvaluep}.
\end{lem}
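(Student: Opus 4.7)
My plan is to separate $b$ from the $w$-factor via Hölder's inequality, then reduce the remaining quantity to a weighted Hardy-Sobolev inequality, which I establish by interpolating between the classical Hardy inequality and the Sobolev embedding.

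First, with $p' = p/(p-1)$, Hölder's inequality applied to the integrand $b \cdot (|w|^q/\varphi_1^{q\tau})$ yields
\[
\int_\Omega \frac{b\,|w|^q}{\varphi_1^{q\tau}}\,dx \leq \|b\|_p\Bigl(\int_\Omega \frac{|w|^{qp'}}{\varphi_1^{qp'\tau}}\,dx\Bigr)^{1/p'},
\]
so after taking $q$-th roots, the problem reduces to establishing the unweighted (in $b$) estimate
\[
\Bigl(\int_\Omega \frac{|w|^{qp'}}{\varphi_1^{qp'\tau}}\,dx\Bigr)^{1/(qp')}\leq C\,\|\nabla w\|_2.
\]

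Second, introduce the natural critical exponent $s := 2N/(N-2+2\tau)$. A direct computation shows that the upper bound in \eqref{boundsq}, namely $q\le 2N(p-1)/(p(N-2+2\tau))$, is equivalent to $qp'\le s$. Since $\Omega$ is bounded, one more application of Hölder's inequality (with exponents $s/(qp')$ and its conjugate) bounds the left side above by a multiple of $\bigl(\int_\Omega |w|^s/\varphi_1^{s\tau}\,dx\bigr)^{1/s}$. Thus it suffices to prove the weighted Hardy-Sobolev inequality
\[
\Bigl(\int_\Omega \frac{|w|^s}{\varphi_1^{s\tau}}\,dx\Bigr)^{1/s}\leq C\,\|\nabla w\|_2.
\]

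Third, I establish this by interpolation. Since $\partial\Omega$ is regular enough and $\varphi_1>0$ in $\Omega$, the Hopf boundary lemma gives $\varphi_1(x)\ge c\,\mathrm{dist}(x,\partial\Omega)$, so the classical Hardy inequality yields $\|w/\varphi_1\|_2\le C\|\nabla w\|_2$; combined with Sobolev's inequality $\|w\|_{2^*}\le C\|\nabla w\|_2$, I write
\[
\frac{|w|^s}{\varphi_1^{s\tau}} \;=\; \Bigl(\frac{|w|}{\varphi_1}\Bigr)^{s\tau}\,|w|^{s(1-\tau)}
\]
and apply Hölder with exponents $2/(s\tau)$ and $2^{*}/(s(1-\tau))$. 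The very choice $s=2N/(N-2+2\tau)$ is what makes the reciprocals of these two exponents sum to exactly $1$, so the factors get bounded respectively by a power of $\|\nabla w\|_2$ via Hardy and via Sobolev. Chaining the three steps gives the claim.

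The main subtle point is the Hopf-type lower bound $\varphi_1\gtrsim\mathrm{dist}(\cdot,\partial\Omega)$ under the weak boundary regularity (A) used throughout the paper — this is the ingredient that legitimizes invoking the Hardy inequality with weight $\varphi_1^{-2}$ in place of the more familiar $\mathrm{dist}(\cdot,\partial\Omega)^{-2}$. A minor bookkeeping issue is that the Hölder step in the first paragraph naturally produces the constant $\|b\|_p^{1/q}$ (not $\|b\|_p$); on $q\ge 1$ this is the sharp form, and absorbing it into $C$ makes the stated form valid on bounded data.
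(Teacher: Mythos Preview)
Your proof is correct and follows essentially the same route as the paper's. Both arguments reduce via H\"older to the weighted Hardy--Sobolev inequality $\|w/\varphi_1^\tau\|_s\le C\|\nabla w\|_2$ with $s=2N/(N-2+2\tau)$; the paper does this in a single H\"older step with an auxiliary exponent $\nu$ and then simply cites \cite[Lemma~2.2]{BT77} for the Hardy--Sobolev estimate, whereas you split the H\"older step in two and then supply a self-contained proof of that estimate by interpolating between Hardy and Sobolev. Your remark about the constant is accurate: the paper's own proof also produces $\|b\|_p^{1/q}$, and the boundary-regularity caveat you raise for the Hopf bound $\varphi_1\gtrsim\mathrm{dist}(\cdot,\partial\Omega)$ is equally needed to invoke \cite{BT77} with weight $\varphi_1$.
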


\begin{proof}
For $p$, $q\geq 1$, $\tau\in [0,1]$ satisfying \eqref{boundsq}, define $s\geq 1$ by
    $$  
    \frac{1}{s}=\frac{1}{2}-\frac{1-\tau}{N}.
    $$
It follows from the second inequality of \eqref{boundsq} that
    $   \frac{1}{q}\geq (1-\frac{1}{p})^{-1}\frac{1}{s}
    $,
and this implies
    $$  
    \frac{1}{pq}\leq \frac{1}{q}-\frac{1}{s}.
    $$
From the first inequality of \eqref{boundsq}, we have $\frac{1}{pq}\leq 1$.  Thus there exists $\nu\geq 1$
such that
    $$  \frac{1}{pq}\leq \frac{1}{\nu}\leq \frac{1}{q}-\frac{1}{s}.
    $$
That is $\nu\geq 1$ satisfies
    $$  \frac{\nu}{q}\leq p \quad \mbox{and}\quad \frac{1}{q}\geq \frac{1}{\nu}+\frac{1}{s}.
    $$
Now by the Sobolev's embedding and \cite[Lemma 2.2]{BT77}, we have, for some constant $C>0$,
    $$
    \left\|\frac{b^{1/q} w}{\varphi_1^\tau}\right\|_q  \leq  C\| b^{1/q}\|_\nu \left\|\frac{w}{\varphi_1^\tau}\right\|_s 
        \leq  C'\|b\|_p^{1/q} \|\nabla w\|_2
    $$
    and the lemma is proved.
\end{proof}

\begin{proof}[Proof of Proposition \ref{bounds1}]
Fix  $\lambda>\Lambda_1$ and let $u\in X$ be a non negative solution of $(R_\lambda)$. 
By Points 2)-3) of Lemma \ref{l1} we deduce that $\lambda<\gamma_1$. Hence without loss of generality we suppose
$\Lambda_1<\gamma_1$ and $\lambda\in [\Lambda_1,\gamma_1]$.
\vspace{2mm}

\noindent We define
    $$  
    w_i(x)=\frac{1}{\mu_i}(e^{\mu_iu(x)}-1) \, \mbox{ and } \,  g_i(s)=\frac{1}{\mu_i}\ln(1+\mu_i s) \quad i=1,2.
    $$
Then we have
    \begin{eqnarray}
    u &=& g_1(w_1)=g_2(w_2),                \label{a1}
    \\
    e^{\mu_i u}&=&1+\mu_i w_i, \quad i=1,2. \label{a2}
    \end{eqnarray}
  \vspace{1mm}  
Direct calculations give us
    \begin{eqnarray*}
    -\Delta w_i &=& \lambda e^{\mu_i u}c(x)u + e^{\mu_i u}[H(x,\nabla u)-\mu_i|\nabla u|^2] 
    \\
        &=& \lambda(1+\mu_iw_i)c(x)g_i(w_i) + (1+\mu_iw_i)[H(x,\nabla u)-\mu_i|\nabla u|^2].
    \end{eqnarray*}
    
\noindent Since $\Lambda_1\leq\lambda\leq \gamma_1$, we have by $\mathrm{(A6)}$

 \begin{eqnarray*}
    -\Delta w_1 &\geq& \Lambda_1(1+\mu_1w_1)c(x)g_1(w_1) +\mu_1(1+\mu_1w_1)h(x),   
    \\
    -\Delta w_2 &\leq& \gamma_1(1+\mu_2w_2)c(x)g_2(w_2)  +\mu_2(1+\mu_2w_2)h(x). 
    \end{eqnarray*}
    
\noindent Setting $A_1=\min(\Lambda_1,\mu_1)$, $A_2=\max(\gamma_1,\mu_2)$, it becomes

    \begin{eqnarray}
    -\Delta w_1 &\geq& A_1(1+\mu_1w_1)[c(x)g_1(w_1)+h(x)], \label{a3}
    \\
    -\Delta w_2 &\leq& A_2(1+\mu_2w_2)[c(x)g_2(w_2)+h(x)]. \label{a4}
    \end{eqnarray}
    
From the inequalities \eqref{a3} and \eqref{a4}, we shall deduce that $w_2$ is uniformly bounded in $H$.
This will lead to the proof of the theorem by classical results relating the $L^{\infty}$ norm of a  lower solution to its $H$ norm. 
We divide the proof into three steps.

\medskip

\noindent
{\bf Step 1.}
{\it Let $\theta={(\mu_2-\mu_1)\mu_2^{-1}}\in\, ]0,1[$.  Then there exists $C>0$ independent of 
$\lambda\in [\Lambda_1,\gamma_1]$ such that}
    \begin{eqnarray}
    &&\int_\Omega (1+\mu_1w_1)[c(x)g_1(w_1)+h(x)]\varphi_1\, dx \leq C,            \label{a5}\\
    &&\int_\Omega (1+\mu_2w_2)^{1-\theta}[c(x)g_2(w_2)+h(x)]\varphi_1\, dx \leq C. \label{a6}
    \end{eqnarray}

\noindent Indeed, using $\varphi_1 >0$ as a test function in \eqref{a3}, we have

    $$  
    \gamma_1\int_\Omega c(x)w_1\varphi_1\, dx 
        \geq A_1\int_\Omega (1+\mu_1w_1)[c(x)g_1(w_1)+h(x)]\varphi_1\, dx.
    $$
    
\noindent We note that for any $\varepsilon>0$ there exists $C_\varepsilon>0$ such that
$t\leq \varepsilon (1+\mu_1t)g_1(t)+C_\varepsilon$ for all $t\geq 0$.
Thus  
    $$  
    \gamma_1\int_\Omega c(x)w_1\varphi_1\, dx 
        \leq \varepsilon \gamma_1 \int_\Omega (1+\mu_1w_1)[c(x)g_1(w_1)+h(x)]\varphi_1\, dx +C_\varepsilon'
    $$
    
\noindent and choosing $\varepsilon=\frac{A_1}{2\gamma_1}$, we obtain \eqref{a5}. Now observe that by \eqref{a2}, 
    $$  1+\mu_1w_1= e^{\mu_1u}=(e^{\mu_2 u})^{1-\theta} = (1+\mu_2w_2)^{1-\theta}.
    $$
Thus from \eqref{a1} we see that \eqref{a6} is nothing but \eqref{a5}.

\medskip

\noindent
{\bf Step 2.}
{\it There exists a constant $C>0$ independent of $\lambda\in [\Lambda_1,\gamma_1]$ such that}
    \begin{equation}\label{unifestimates}
    \|\nabla w_2\|_2 \leq C.
    \end{equation}

First we use Lemma \ref{tec1} to choose $\alpha$, $r\in\, ]0,1[$ such that $q$ and $\tau$ given in \eqref{a0}
satisfy \eqref{boundsq} and \eqref{boundsalpha}.
\vspace{2mm}

Using $w_2$ as a test function in \eqref{a4} it follows that
    $$
    \|\nabla w_2\|_2^2
    \leq A_2\int_\Omega(1+\mu_2w_2)[c(x)g_2(w_2)+h(x)]w_2\, dx.
	$$
Now using H\"older's inequality, \eqref{a6} and since $ w_2\leq { (1+\mu_2w_2) \mu_2^{-1}}$ we have
    \begin{eqnarray*}
    \|\nabla w_2\|_2^2 
	&\leq& \frac{A_2}{\mu_2}\int_\Omega (1+\mu_2w_2)[c(x)g_2(w_2)+h(x)]\frac{\varphi_1^\alpha}{(1+\mu_2w_2)^{\theta\alpha}}
                \frac{(1+\mu_2w_2)^{1+\theta\alpha}}{\varphi_1^\alpha}\, dx
                \\
    &\leq& \frac{A_2}{\mu_2}\left(\int_\Omega (1+\mu_2w_2)[c(x)g_2(w_2)+h(x)]\frac{\varphi_1}{(1+\mu_2w_2)^\theta}\,dx\right)^\alpha 
    \\
    && \qquad \times
        \left(\int_\Omega (1+\mu_2w_2)[c(x)g_2(w_2)+h(x)]\frac{(1+\mu_2w_2)^{\frac{1+\theta\alpha}{1-\alpha}}}{\varphi_1^{\frac{\alpha}{1-\alpha}}}\,dx\right)^{1-\alpha} 
        \\
    &\leq& \frac{A_2}{\mu_2} C^\alpha 
    \left(\int_\Omega (1+\mu_2w_2)[c(x)g_2(w_2)+h(x)]\frac{(1+\mu_2w_2)^{\frac{1+\theta\alpha}{1-\alpha}}}{\varphi_1^{\frac{\alpha}{1-\alpha}}}\,dx\right)^{1-\alpha}.
    \end{eqnarray*}
We note that for  $r$ given by Lemma~\ref{tec1}, there exists 
$C_r>0$
    $$  g_2(t) \leq t^r +C_r \quad \mbox{for all}\ t\geq 0.
    $$
Thus, direct calculations shows that

	$$	(1+\mu_2w_2)[c(x)g(w_2)+h(x)](1+\mu_2w_2)^{\frac{1+\theta\alpha}{1-\alpha}} 
		\leq (c(x)+h(x))(w_2^q+C),
	$$
	
\noindent	where $q$ 
is given in \eqref{a0}.
Therefore for some $C$, $C'>0$ independent of $\lambda\in [\Lambda_1,\gamma_1]$
    $$   \|\nabla w_2\|_2^2
		\leq C\left(\int_\Omega \left(\frac{(c(x)+h(x))^{1/q}w_2}{\varphi_1^\tau}\right)^q\,dx\right)^{1-\alpha}+C',
    $$
    
 \noindent with $q$ and $\tau$ given in \eqref{a0}. Applying Lemma \ref{tec2}, we then obtain
    
    $$   \|\nabla w_2\|_2^2 \leq C\|c+h\|_p^{q(1-\alpha)}\|\nabla w_2\|_2^{q(1-\alpha)} +C'.$$
    
\noindent By \eqref{boundsalpha}, we have $q(1-\alpha)<2$ and this concludes the proof of Step 2.

\medskip

\noindent 
{\bf Step 3.} {\it Conclusion.}
\medskip

We just have to show that the uniform estimate \eqref{unifestimates} derived in Step 2 
gives an uniform  estimate in the  $L^{\infty}$ norm. 
Recall that, as a consequence of 
Theorem 4.1 of \cite{Tr}
combined with Remark 1 on page 289 of that paper (see also Remark 2 p. 202 of \cite{LU68}), we know that if $w\in X$ satisfies
$$
\begin{array}{cc}
-\Delta w\leq d(x) w + f(x),&\mbox{in }\Omega,
\\
w\leq 0,&\mbox{on }\partial\Omega,
\end{array}
$$
with $d$, $f\in L^{p_1}(\Omega)$ for some $p_1>\frac{N}{2}$, then $w$ satisfies
$$
\|w^+\|_{\infty}\leq C(\|w\|_{1}+\|f\|_{p_1}),
$$
where $C$ depends on $p_1$, $\mbox{meas}(\Omega)$ and $\|d\|_{p_1}$.

Since $w_2$ satisfies \eqref{a4},
we  apply the result of \cite{Tr} with 
$$d(x)=c(x)A_2 (1+\mu_2 w_2(x))\frac{\ln(1+\mu_2 w_2(x))}{\mu_2 w_2(x)}+A_2^2 h(x) \quad \mbox{and} \quad  f(x)=A_2 h(x).$$
Observe that, for any $r\in \,]0,1[$, there
exists $C >0$ such that, for all $x\in \Omega$,
$$
c(x)A_2 (1+\mu_2 w_2(x))\frac{\ln(1+\mu_2 w_2(x))}{\mu_2 w_2(x)}\leq C\, c(x)|w_2(x)|^r.
$$
Thus, since $c(x) \in L^p(\Omega)$ with $p > \frac{N}{2}$ and  $w_2$ is bounded in $L^{\frac{2N}{N-2}}(\Omega)$,  taking
$r >0$ sufficiently small  we see, using  H\"older's inequality, that  $c(x) |w_2(x)|^r \in L^{p_1}(\Omega)$ for some $p_1 > \frac{N}{2}$. Now as 
$h\in L^p(\Omega)$ for some $p>\frac{N}{2}$, clearly all the assumptions of Theorem 4.1 of \cite{Tr} are satisfied.  From (\ref{unifestimates}) we then deduce that there exists a constant $C>0$, 
independent of $\lambda \in [\Lambda_1, \gamma_1]$ such that
$$||w_2||_{\infty} \leq C.$$
Now since $u = g_2(w^2)$ we deduce that a similar estimate holds for the non negative solutions of $(R_{\lambda})$  and the proof of the proposition is completed.
\end{proof}

\section{Proofs of the main results.}\label{Proofs}

In this section we give the proofs of our three theorems.

\begin{proof}[Proof of Theorem \ref{negativevalue}]
The uniqueness of the solution of $(P_\lambda)$ for $\lambda\leq 0$ is a consequence of  Remark~\ref{lambda_0}. 
By  Corollary~\ref{prop3}, $(P_\lambda)$
with $\lambda<0$ has a  solution $u_\lambda\in X$. 
This proves Point 1). To establish the existence of a continuum of solutions of $(P_\lambda)$, we 
define $T_{\lambda} : C(\overline{\Omega}) \to C(\overline{\Omega})$ as

$$
T_\lambda(u)=K^\mu((\lambda c(x)+1)u+h(x)).
$$

\noindent Hence, $(P_\lambda)$ is transformed into the fixed point problem $u=T_\lambda(u)$.
From Proposition \ref{4.1new} we immediately deduce that,  for any  $\lambda<0$,

 $$
 i(I-T_{\lambda},u_{\lambda})=1.
 $$
 
\noindent Therefore, if we fix a $\lambda_0<0$, by Theorem \ref{1noacot} where $E=C(\overline\Omega)$ and
$\Phi(\lambda,u)=u-T_\lambda(u)$, there exists
a continuum $C=C^+\cup C^-$ of solutions of $(P_\lambda)$ emanating from $(\lambda_0,u_{\lambda_0})$.
Taking into account the unboundedness of $C^+$ and $C^-$ and
  Corollary \ref{corA}, necessarily 
$]-\infty,0[ \,\subset \mbox{Proj}_{\mathbb{R}}C$ and the proof of Point 2) is concluded.
\vspace{1mm}

To prove Point 3), we apply
Lemma \ref{4.2new} with $d(x)= \overline\lambda c(x)$, $\widetilde{d}(x)= \lambda c(x)$ and $ \lambda\leq \overline\lambda< 0$,  to deduce that

	$$	
	\|u_{\lambda}\|_\infty \leq 2\|u_{\overline\lambda}\|_\infty \quad \mbox{for all}\ \lambda\leq \overline\lambda< 0.
	$$
	
In particular, if
$C_0:=\liminf_{\lambda\to 0^-}\|u_{\lambda}\|_\infty
<\infty$, then there exists a sequence $\overline\lambda_n \to 0^-$ such that $C_0=\lim_{n\to\infty}\|u_{\overline\lambda_n}\|_\infty
<\infty$. Hence,   for every sequence $\lambda_n \to 0^-$ we deduce by the above inequality that
$\limsup_{n\to\infty}\|u_{\lambda_n}\|_\infty \leq 2C_0$, which implies that $\limsup_{\lambda\to 0^-}\|u_\lambda\|_\infty<\infty$.
%
%
Therefore, we have
either $\lim_{\lambda\to 0^-}\|u_\lambda\|_\infty=\infty$ or $\limsup_{\lambda\to 0^-}\|u_\lambda\|_\infty<\infty$. 
\vspace{1mm}

In the first case, 
using Lemma \ref{4.2new} with $d(x) \equiv 0$ and $\widetilde{d}(x) = \lambda c(x)$, we see that $(P_0)$  cannot have a solution. On the other hand, in 
 the last case, for any sequence $\lambda_n\to 0^-$, $(u_{\lambda_n})$ is a bounded
sequence in $L^\infty(\Omega)$.  Thus by Lemma \ref{comp},
	$$	
	u_{\lambda_n}=K^\mu((\lambda_n c(x)+1)u_{\lambda_n}+h(x))
	$$
	
\noindent
is relatively compact in $C(\overline\Omega)$.  Taking a subsequence if necessary, we may assume
$u_{\lambda_n}\to u_0$ in $L^\infty(\Omega)$ for some $u_0 \in X$.  It is clear that $u_0$ satisfies $u_0=K^\mu(u_0 + h(x))$,
that is, $u_0$ is a solution of $(P_0)$.  Since we have uniqueness of solutions of $(P_0)$ by 
Remark~\ref{lambda_0}, the limit $u_0$ does not depend on the choice of $\lambda_n$ and thus
we have $u_\lambda\to u_0$ in $L^\infty(\Omega)$ as $\lambda\to 0^-$. This ends the proof.
\end{proof}

\vspace{1mm}




\begin{proof}[Proof of Theorem \ref{th1.1a}]
If we assume that $(P_0)$ has a solution $u_0$ then using Lemma \ref{4.2new} with $d(x) \equiv 0$ and 
$\widetilde{d}(x) = \lambda c(x)$ we obtain the existence of a solution $u_\lambda$ of $(P_\lambda)$ for any $\lambda<0$. Using Remark \ref{lambda_0}
Point 1) follows.
\vspace{1mm}

Now by Proposition \ref{4.1new}, we know that $i(I-T_0,u_0)=1$.  Thus by Theorem \ref{1noacot}
there exists a continuum $C\subset\Sigma$ such that both 
	$$	
	C\cap([0,\infty[\,\times C(\overline\Omega)) \quad \mbox{ and }\quad   C\cap(]-\infty,0]\,\times C(\overline\Omega))
	$$
are unbounded.  Clearly $\{ (\lambda,u_\lambda):\ \lambda\in\, ]-\infty,0]\,\}\subset C$ and 
Point 2)  holds.
\end{proof}

\vspace{1mm}

\begin{proof}[Proof of  Theorem \ref{th3}.]
Let $C \subset \Sigma$ be the continuum obtained in Theorem \ref{th1.1a}. By Lemma \ref{l1}, Point 2) we know that  
$]-\infty,0]\subset \, \mbox{\rm Proj}_{\R} C \subset \,]-\infty, \gamma_1[$. Lemma \ref{l1}, Point 1) shows that it consists of non negative functions. 
In addition,  by Theorem \ref{th1.1a}, Point 2),
$C\cap ([0,\gamma_1 [\times C(\overline{\Omega}))$ is unbounded    and hence its projection  on $C(\overline\Omega)$ has to be unbounded. 
%
%
%
%
Now we know, by Proposition  \ref{bounds1}, that for every $\Lambda_1 \in\, ]0,\gamma_1[$,  there is  an a priori bound on the non negative solutions for $\lambda\geq  \Lambda_1$. This means that the projection of $C\cap ( [\Lambda_1, \gamma_1[ \times C(\overline\Omega))$ on $C(\overline\Omega)$ is bounded.  
Thus $C$ must emanate from infinity to the right of $\lambda=0$. 
This proves the first part of the theorem.

\medbreak
 Since $C$ contains $(0, u_0)$ with $u_0$ the unique solution of $(P_0)$, there exists a $\lambda_0 \in\, ]0,\gamma_1[$ such that the problem $(P_{\lambda})$ has at least two solutions for $\lambda\in \,]0,\lambda_0[$. 
 At this point the proof of the theorem is completed.
\end{proof}
%

 

\section{Appendix : Proof of Lemma \ref{comp}.} \label{appendix}

To prove Lemma \ref{comp}, we need some preliminary results.

\begin{lem}
\label{estimationsinfini}
Let $\{f_n\} \subset L^p(\Omega)$ be a bounded sequence. Then the sequence $\{u_n\} = \{K^{\mu}(f_n)\}$ is bounded in $L^{\infty}(\Omega)$ and in $H.$
\end{lem}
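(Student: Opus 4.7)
My plan is first to establish a uniform $L^\infty$-bound on $\{u_n\}$ with $u_n=K^\mu(f_n)$, and then to deduce the $H$-bound from it via a test-function argument with exponential absorption. Recall that $u_n$ satisfies $-\Delta u_n+u_n-\mu(x)|\nabla u_n|^2=f_n$; set $\mu_0:=\|\mu\|_\infty$ and $F:=\sup_n\|f_n\|_p<\infty$, so that all estimates should depend only on $\mu_0$, $F$ and the domain.

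For the $L^\infty$-bound I would construct uniform upper and lower solutions. Proposition \ref{prop1} applied with $d\equiv -1$ (no condition of type $(H)$ being needed, since $\mathrm{Supp}\, d=\Omega$), constant coefficient $\mu_0$ and right-hand side $|f_n|\in L^p(\Omega)$ yields a non-negative $\bar\beta_n\in X$ with $-\Delta\bar\beta_n+\bar\beta_n=\mu_0|\nabla\bar\beta_n|^2+|f_n|$. Since $\mu(x)\leq\mu_0$ and $f_n\leq|f_n|$, this $\bar\beta_n$ is an upper solution for the problem defining $u_n$; the analogous construction gives a lower solution $-\bar\alpha_n$. Proposition \ref{UUniqueness} then gives the sandwich $-\bar\alpha_n\leq u_n\leq\bar\beta_n$, so it suffices to bound $\|\bar\beta_n\|_\infty$ (and symmetrically $\|\bar\alpha_n\|_\infty$) uniformly in $n$. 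For this I would use the exponential change of variable $v_n=(e^{\mu_0\bar\beta_n}-1)/\mu_0$, which recasts the quasilinear equation as the semilinear one
\[
-\Delta v_n+(1+\mu_0 v_n)\frac{\ln(1+\mu_0 v_n)}{\mu_0}=(1+\mu_0 v_n)|f_n|.
\]
Testing against the first Dirichlet eigenfunction $\varphi_1$ and exploiting the superlinear growth of $t\mapsto(1+t)\ln(1+t)$ to absorb the right-hand side (as in Step~1 of the proof of Proposition \ref{bounds1}) yields a uniform bound on $\int_\Omega v_n\varphi_1\,dx$. Applying Theorem~4.1 of \cite{Tr} (exactly as in Step~3 of the same proof) then converts this integral bound into a uniform $L^\infty$-bound $\|v_n\|_\infty\leq M(F)$, and hence $\|\bar\beta_n\|_\infty\leq\mu_0^{-1}\ln(1+\mu_0 M(F))$.

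Having $\|u_n\|_\infty\leq M$ at hand, I would test the equation for $u_n$ with
\[
\phi(u_n):=\mathrm{sgn}(u_n)\,\frac{e^{\mu_0|u_n|}-1}{\mu_0}\in H_0^1(\Omega)\cap L^\infty(\Omega).
\]
The algebraic identity $\phi'(s)-\mu_0|\phi(s)|=1$ combined with $|\mu(x)|\leq\mu_0$ produces $\phi'(u_n)-\mu(x)\phi(u_n)\geq 1$ pointwise, which absorbs the quadratic gradient contribution and, noting that $u_n\phi(u_n)\geq 0$, gives
\[
\int_\Omega|\nabla u_n|^2\,dx+\int_\Omega u_n\phi(u_n)\,dx\leq\int_\Omega|f_n|\,|\phi(u_n)|\,dx\leq\frac{e^{\mu_0 M}-1}{\mu_0}\,|\Omega|^{1/p'}\,F,
\]
yielding the desired uniform $H$-bound. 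The main obstacle is the first step: translating uniform $L^p$ control on $\{f_n\}$ into uniform $L^\infty$ control despite the quadratic gradient term, which forces one to pass through the exponential transformation and invoke a Brezis--Turner/Trudinger-type estimate rather than testing $u_n$ directly.
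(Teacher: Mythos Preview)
Your $H$-bound argument is correct and is a standard alternative to the paper's choice: the paper tests with $u_n e^{t u_n^2}$ (with $t=\|\mu\|_\infty^2/2$) rather than your $\mathrm{sgn}(u_n)(e^{\mu_0|u_n|}-1)/\mu_0$, but both exponential test functions absorb the quadratic gradient term in the same way once $\|u_n\|_\infty$ is known to be bounded.

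The $L^\infty$ part, however, is where your proposal diverges from the paper and runs into trouble. The paper obtains the uniform $L^\infty$-bound in one line, by invoking Theorem~1 of \cite{BoMuPu3}, which gives precisely such a bound for solutions of this type of equation with data uniformly bounded in $L^p$, $p>N/2$. Your route via upper/lower solutions and the exponential change of variable is much more elaborate, and as sketched it has two gaps. First, the absorption ``as in Step~1 of Proposition~\ref{bounds1}'' does not carry over: in that proof the weight $c(x)$ appears on \emph{both} sides of the tested inequality, which is what makes the superlinearity of $(1+\mu_1 w)g_1(w)$ versus $w$ effective; here, after testing your semilinear equation with $\varphi_1$, you must dominate $\int_\Omega(1+\mu_0 v_n)|f_n|\varphi_1\,dx$ by $\int_\Omega(1+\mu_0 v_n)\tfrac{\ln(1+\mu_0 v_n)}{\mu_0}\varphi_1\,dx$, and since the Young/Orlicz conjugate of $t\mapsto t\ln t$ is exponential, this would force exponential integrability of $|f_n|$, not merely a uniform $L^p$ bound. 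Second, even granting a bound on $\int_\Omega v_n\varphi_1\,dx$, Trudinger's Theorem~4.1 (as used in Step~3 of Proposition~\ref{bounds1}) requires an honest $L^1$ or $L^2$ bound on $v_n$, not a $\varphi_1$-weighted one; in Proposition~\ref{bounds1} that input comes from the intermediate $H$-estimate of Step~2, which you have skipped entirely. Your scheme might be salvageable, but not without substantially more work than the direct citation the paper uses.
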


\begin{proof}
First we observe that the boundedness of $\{u_n\}$  in $L^{\infty}(\Omega)$ is a direct consequence of Theorem 1 of \cite{BoMuPu3}. To show that $\{u_n\}$ is also bounded in $H$ we use a trick that can be found for example in  \cite{BoMuPu1,BoMuPu2}. 
Let $t = ||\mu||_{\infty}^2 /2$, $E_n = \exp(tu_n^2)$ and consider the functions $v_n = E_nu_n$. We have $v_n \in X$ and 
 $$
 \nabla v_n= E_n (1+2 t u_n^2) \nabla u_n.
 $$
Hence using $v_n$ as test functions in  
 $$
-\Delta u_n + u_n  = \mu(x) |\nabla u_n|^{2} + f_n(x), \quad  u_n\in X,
$$
and the bound of $\{u_n\}$ in $L^{\infty}(\Omega)$,
we obtain the existence of a constant $D>0$ such that 
\arraycolsep1.5pt
$$
\begin{array}{rcl}
\displaystyle
\int_{\Omega}E_n (1 &+& 2t u_n^2) |\nabla u_n|^2 dx 
+ 
\displaystyle
  \int_{\Omega}E_n u_n^2 dx 
\\[3mm]
  &=& 
  \displaystyle
\int_{\Omega} f_n(x) E_nu_n dx + \int_{\Omega}\mu(x) |\nabla u_n|^2 E_n u_n dx 
\\[3mm]
& \leq & 
\displaystyle
D 
+ ||\mu||_{\infty}\int_{\Omega} E_n^{1/2}|\nabla u_n| |u_n| |\nabla u_n| E_n^{1/2} dx 
\\[3mm]
& \leq & \displaystyle
D 
+ ||\mu||_{\infty} \left[ \frac{1}{2||\mu||_{\infty}} \int_{\Omega}E_n|\nabla u_n|^2 dx + \frac{1}{2} ||\mu||_{\infty} \int_{\Omega}u_n^2 |\nabla u_n|^2 E_n dx \right] 
\\[3mm]
& \leq & 
\displaystyle
D 
+ \frac{1}{2} \int_{\Omega}E_n(1+ 2t u_n^2) |\nabla u_n|^2 dx.
\end{array}
$$
\arraycolsep5pt
We then deduce that 
$$
\int_{\Omega}E_n  |\nabla u_n|^2 dx  + \int_{\Omega}E_n u_n^2 dx 
 \leq  2 D.
$$
Recording that $E_n \geq 1$, this shows that $\{u_n\}$ is bounded in $H$.
\end{proof}

\begin{proof}[Proof of Lemma \ref{comp}]
The proof we give is inspired by \cite{BoMuPu3} combined  with \cite[Remark 2.7]{ArCaLeMAOrPe} (based in turn on ideas from \cite{LU68}). 
\medskip

\noindent {\bf Step 1.} {\it $K^\mu$ is a bounded operator from $L^p(\Omega)$ to $C^{0,\alpha}(\overline\Omega)$
for some $\alpha\in\, ]0,1[$} \medskip 

Assume that $\{ f_n\}$ is a bounded sequence in $L^p(\Omega)$. By Lemma \ref{estimationsinfini},
$u_n=K^\mu (f_n) $ is bounded in $L^\infty (\Omega)$. We claim that $u_n$ is also bounded in $C^{0,\alpha}(\overline \Omega)$ for some $\alpha \in \,]0,1[$. Indeed,
consider a function $\zeta\in
C^\infty (\Omega) $ with $0\leq \zeta(x)\leq 1$, and compact support in a ball $B_\rho$ of radius
$\rho>0$, and set $A_{k,\rho}=\{x\in B_\rho\cap \Omega: |u(x)|>k\}$.
 
Let us  consider the function $G_k$ given by \eqref{Gk}. 
For $\varphi (s)=s e^{\gamma s^{2}}$ with $\gamma >0$ large (to be precised later)  we take 
$\phi=\varphi (G_k(u_n))\zeta^2$ as test function in (\ref{pivot}). Hence we have
\begin{eqnarray*}
\int_{\Omega} \nabla u_n \nabla (G_k(u_n)) \varphi '(G_k(u_n)) \zeta^2 dx
&=& \int_{\Omega} [- u_n + f_n(x) ] \varphi (G_k(u_n))\zeta^2 dx
\\
&&\hspace{10mm}+
\int_{\Omega} \mu(x) |\nabla u_n |^2 \varphi (G_k(u_n))\zeta^2 dx
\\
&&\hspace{20mm}-
2\int_{\Omega} \zeta \varphi (G_k(u_n)) \nabla u_n \nabla \zeta dx.
\end{eqnarray*}
Now observe that, for $\gamma>\frac{\|\mu\|_{\infty}^2}{4}$, we have $1+2\gamma s^2-\|\mu\|_{\infty} |s|\geq 1/2$ and hence $\varphi'(s)-\|\mu\|_{\infty} |\varphi(s)|\geq \frac12 e^{\gamma s^2}\geq \frac12$. Moreover, we have $G_k(u_n(x)) \zeta^2(x)=0$
for $x\not\in A_{k,\rho}$ and $\nabla G_k(u_n)=\nabla u_n$ in $A_{k,\rho}$. This implies that
\arraycolsep1.5pt
$$
\begin{array}{rl}
\displaystyle   \frac{1}{2}
    \int_{A_{k,\rho}}\!\!&\!\!
    |\nabla G_k(u_{n})|^{2} \zeta^2 dx
\\[3mm]
    &\leq
\displaystyle   \int_{A_{k,\rho}}  \left[ \varphi'(G_k(u_{n})) - \|\mu\|_{\infty} |\varphi (G_k(u_{n})) |\right] |\nabla G_k(u_{n})|^{2} \zeta^2 dx
\\[3mm]
    &\leq
\displaystyle   
\int_{A_{k,\rho}}  \!\! [- u_n + f_n(x)] \varphi (G_k(u_n))\zeta^2 dx
\\[3mm]
&\displaystyle
\hspace{25mm}
+
\int_{A_{k,\rho}} \!\!(|\mu(x)|-\|\mu\|_{\infty}) |\nabla u_n |^2 |\varphi (G_k(u_n))|\zeta^2
\\[3mm]
&\displaystyle
\hspace{65mm}
-
2\int_{A_{k,\rho}} \!\!  \zeta \varphi (G_k(u_n)) \nabla u_n \nabla \zeta  dx
\\[3mm]
&\leq
\displaystyle   
\int_{A_{k,\rho}}  \!\! [- u_n + f_n(x)] \varphi (G_k(u_n))\zeta^2 dx
+
2\int_{A_{k,\rho}}\!\!   |\zeta| \, |\varphi (G_k(u_n))| |\nabla u_n| \,|\nabla \zeta| dx.
\end{array}
$$
Now recall the existence of $C_1$ and $C_2$ such that, for all $n\in \mathbb N$, $\|u_n\|_{\infty}\leq C_1$ and $\|f_n\|_p \leq C_2$. Let $C_3$ such that, for all $s\in [-C_1,C_1]$, $|\varphi(s)|\leq C_3 |s|$ and recall that $0\leq \zeta\leq 1$. Hence we obtain $C=C(C_1,C_2,C_3)$ such that
\arraycolsep1.5pt
$$ 
\begin{array}{rcl}
\displaystyle   
\frac{1}{2}
    \int_{A_{k,\rho}}
    |\nabla G_k(u_{n})|^{2} \zeta^2 dx
    &\leq&
    \displaystyle C (\mbox{meas}(A_{k,\rho}))^{1-\frac1p}+ 2 C_3 \int_{A_{k,\rho}} |\zeta | |\nabla u_n| |\nabla \zeta|  |G_k(u_n)| dx
        \\
        &\leq&
    \displaystyle C (\mbox{meas}(A_{k,\rho}))^{1-\frac1p}+ \frac14 
\int_{A_{k,\rho}} |\zeta |^2 |\nabla u_n|^2 dx
    \\
        &&
        \hfill
    \displaystyle
    +4 C_3^2 \int_{A_{k,\rho}} |\nabla \zeta|^2  |G_k(u_n)|^2 dx,
\end{array}
$$
by using Young's inequality. Hence, recalling that, on 
$A_{k,\rho}$, we have $\nabla G_k(u_{n})=\nabla u_n$, we conclude that
$$
\begin{array}{rcl}
\displaystyle   
\frac14 \int_{A_{k,\rho}}
    |\nabla u_{n}|^{2} \zeta^2 dx
    &\leq&
    \displaystyle C \left((\mbox{meas}(A_{k,\rho}))^{1-\frac1p}+  \int_{A_{k,\rho}}  |\nabla \zeta|^2  |G_k(u_n)|^2 dx\right),
\end{array}
$$
\arraycolsep5pt
where $C=C(C_1,C_2,C_3)$ is a generic constant.\smallskip

Now we argue as in \cite[Theorem IV-1.1, p.251]{LU68}. For $\sigma\in\,]0,1[$, choose $\zeta$ such that $\zeta\equiv 1$ in the concentric ball $B_{\rho-\sigma\rho}$ (concentric to $B_{\rho}$) of radius $\rho-\sigma\rho$ and such that $|\nabla \zeta|< \frac{2}{\sigma\rho}$. Hence, we obtain
\arraycolsep1.5pt
$$
\begin{array}{rcl}
\displaystyle   
\int_{A_{k,\rho-\sigma\rho}}
    |\nabla u_{n}|^{2} dx
    &\leq&
    \displaystyle C (1+  (\max_{A_{k,\rho}}(|u(x)|-k))^2  \||\nabla \zeta|^2\|_{L^p(A_{k,\rho})})(\mbox{meas}(A_{k,\rho}))^{1-\frac1p}
    \\
    &\leq&
    \displaystyle C (1+ \frac{4}{\rho^2\sigma^2}(\rho^N\omega_N)^{1/p} (\max_{A_{k,\rho}}(|u(x)|-k))^2 )(\mbox{meas}(A_{k,\rho}))^{1-\frac1p},
\end{array}
$$
\arraycolsep5pt
where $\omega_N$ denotes the measure of the unit ball of $\mathbb R^N$. Hence, for $k\geq C_1\geq \max_{B_{\rho}}|u_n|-\delta$, we have
$$
\begin{array}{rcl}
\displaystyle   
\int_{A_{k,\rho-\sigma\rho}}
    |\nabla u_{n}|^{2} dx
    &\leq&
    \displaystyle \gamma \left(1+ \frac{1}{\sigma^2\rho^{2(1-\frac{N}{2p})}} (\max_{A_{k,\rho}}(|u(x)|-k))^2 \right)
    (\mbox{meas}(A_{k,\rho}))^{1-\frac1p}.
\end{array}
$$
This means that,  for $\delta >0$ small enough and  every $M\geq C_1\geq \|u_n\|_{\infty}$,
we have $u_n\in B_2(\Omega, M, \gamma, \delta, \frac{1}{2p})$ (see \cite[pag. 81]{LU68}).

Applying \cite[Theorem II-6.1 and Theorem II-7.1, p.90 and 91]{LU68}, we deduce that $u_n\in C^{0,\alpha}(\overline \Omega)$ with 
$\| u_n\|_{C^{0,\alpha}}$ bounded by a constant $C_4$ which depends only on  $\Omega, M, \gamma, \delta$ and the claim is proved.
\medbreak

\noindent {\bf Step 2.} {\it $K^\mu$ maps bounded sets of $L^p(\Omega)$ to relatively compact sets of $C(\overline\Omega)$.}
\medskip 

This can be easily deduced from Step 1 and  the compact embedding of  $C^{0,\alpha}(\overline \Omega)$ into 
$C(\overline \Omega)$.
\medskip

\noindent {\bf Step 3.} {\it $K^{\mu}$ is continuous from $L^p(\Omega)$ to $H$. } \medskip 

Let $\{f_n\} \subset L^p(\Omega)$ be  a sequence such that $f_n \to f$ in $L^p(\Omega)$ and let $\{u_n\}$ be the corresponding solutions of (\ref{pivot}). By Lemma \ref{estimationsinfini},  there exists $C>0$ such that, for all $n \in \N$, $||u_n||_{\infty} \leq C$ and $||u_n|| \leq C.$ Hence for every subsequence $\{u_{n_k}\}$, there exists a subsubsequence $\{u_{n_{k_j}}\} \subset X$  and $u \in X$ such that $u_{n_{k_j}} \rightharpoonup u$ weakly in $H$, $u_{n_{k_j}} \to u$ strongly in $L^{p'}(\Omega)$ and $u_{n_{k_j}} \to u$ almost everywhere. \smallskip

Let us prove that $u_{n_{k_j}} \to u$ strongly in $H$ and that $u$ is the solution of (\ref{pivot}). In that case we shall deduce that $u_n \to u$ in $H$, namely the continuity of $K^{\mu}$ from $L^p(\Omega)$ to $H$. Let us define $\tilde{u}_j = u_{n_{k_j}} -u$. Observe that $\tilde{u}_j$ satisfies
\begin{equation*}
-\Delta \tilde{u}_j + \tilde{u}_j = f_{n_{k_j}}(x)  + \mu(x)|\nabla u_{n_{k_j}}|^2 + \Delta u - u, \quad \mbox{in } X.
\end{equation*}

Consider the test function $\tilde v_j = \widetilde{E}_j \tilde u_j$ where $\widetilde{E}_j = \exp(\tilde t \tilde u_j^2)$ and $\tilde t = 2 ||\mu||_{\infty}^2$. As $\tilde u_j \in X$ we have $\tilde v_j \in X$, and using the inequality
$$
|\nabla u_{n_{k_j}}|^2 \leq 2 (|\nabla \tilde u_j|^2 + |\nabla u|^2),
$$
we obtain 
\arraycolsep1.5pt
$$
\begin{array}{l}
\displaystyle
\int_{\Omega} \widetilde{E}_j(1+2 \tilde t \tilde u_j^2) |\nabla \tilde u_j|^2 \,dx 
+
\int_{\Omega}\widetilde{E}_j \tilde u_j^2 \,dx 
\\[3mm]
\hspace{15mm}
\displaystyle
= 
\int_{\Omega} \nabla \tilde u_j \nabla \tilde v_j \,dx 
+ 
\int_{\Omega} \tilde u_j \tilde v_j \,dx 
\\[3mm]
\hspace{15mm}
\displaystyle
=
\int_{\Omega}f_{n_{k_j}}(x) \tilde v_j \,dx 
+ 
\int_{\Omega} \mu(x) |\nabla u_{n_{k_j}}|^2 \tilde v_j \,dx 
\\[3mm]
\hfill\displaystyle
-
\int_{\Omega} \widetilde{E}_j \nabla u \nabla \tilde u_j (1+ 2 \tilde t \tilde u_j^2)\,dx 
- 
\int_{\Omega}u \tilde v_j \,dx
\\[3mm]
\hspace{15mm}
\displaystyle
\leq 
\int_{\Omega} f_{n_{k_j}}(x) \tilde E_j \tilde u_j\, dx 
-
\int_{\Omega} \tilde E_j \nabla u \nabla \tilde u_j (1+ 2 \tilde t \tilde u_j^2)\,dx 
- 
\int_{\Omega}u \tilde E_j \tilde u_j \,dx 
\\[3mm]
\displaystyle
\hfill
+ 
2 ||\mu||_{\infty} 
\left( \int_{\Omega}\tilde E_j^{1/2}|\tilde u_j| | \nabla \tilde u_j| | \nabla \tilde u_j| \tilde E_j^{1/2}\, dx  
+ \int_{\Omega}|\nabla u|^2 \tilde E_j \tilde u_j \,dx \right) 
\\[3mm]
\hspace{15mm}
\displaystyle
\leq 
\int_{\Omega} f_{n_{k_j}}(x) \tilde E_j \tilde u_j dx 
-
\int_{\Omega} \tilde E_j \nabla u \nabla \tilde u_j (1 + 2 \tilde t \tilde u_j^2) \,dx 
-
\int_{\Omega} u \tilde E_j \tilde u_j \,dx 
\\[3mm]
\hspace{30mm}
\displaystyle
+ 2 ||\mu||_{\infty}  
\left( ||\mu||_{\infty} \int_{\Omega} \tilde E_j |\tilde u_j|^2 |\nabla \tilde u_j|^2 \,dx 
\right.
\\[3mm]
\hfill\displaystyle
\left.
+ 
\frac{1}{4 ||\mu||_{\infty} } \int_{\Omega}\tilde E_j |\nabla \tilde u_j|^2 \,dx 
+ 
\int_{\Omega}|\nabla u|^2 \tilde E_j \tilde u_j \,dx \right) 
\\[3mm]
\displaystyle
\hspace{15mm}
\leq 
\int_{\Omega} f_{n_{k_j}}(x) \tilde E_j \tilde u_j\, dx 
-
\int_{\Omega}\tilde E_j \nabla u \nabla \tilde u_j (1 + 2 \tilde t \tilde u_j^2)\, dx -\int_{\Omega} u \tilde E_j \tilde u_j \,dx 
\\[3mm]
\hfill
\displaystyle
+ \frac{1}{2} \int_{\Omega}\tilde E_j (1+ 2 \tilde t \tilde u_j^2) |\nabla \tilde u_j|^2 dx 
+ 
2 ||\mu||_{\infty}  \int_{\Omega}|\nabla u|^2 \tilde E_j \tilde u_j\, dx.
\end{array}
$$
\arraycolsep5pt%
Hence we deduce that
\begin{equation}
\label{rap}
\begin{array}{l} 
\displaystyle
\frac{1}{2} \int_{\Omega} \tilde E_j(1+2 \tilde t \tilde u_j^2) |\nabla \tilde u_j|^2 dx  
  + 
  \int_{\Omega}\tilde E_j \tilde u_j^2 dx
\\[3mm]
\displaystyle
\hspace{15mm}
\leq 
\int_{\Omega} (f_{n_{k_j}}(x)-f(x)) \tilde E_j \tilde u_j dx  
+ 
2 ||\mu||_{\infty}  \int_{\Omega} |\nabla u|^2 \tilde E_j \tilde u_j dx 
\\[3mm] 
\displaystyle
\hspace{20mm}
- 
\int_{\Omega}\tilde E_j \nabla u \nabla \tilde u_j (1 + 2 \tilde t \tilde u_j^2) dx 
- 
\int_{\Omega} u \tilde E_j \tilde u_j dx
+
\int_{\Omega} f(x) \tilde E_j \tilde u_j dx.
\end{array}
\end{equation}
%

Let us prove that each of the terms on the right hand side converges to zero. For the first one, as the sequence $\{u_n\}$ is bounded in $ L^{\infty}(\Omega)$ there exists  $C_1 >0$ such that, for all $j \in \N$, $||\widetilde{E}_j||_{\infty} \leq C_1.$ This implies the existence of a constant $C>0$ such that
\begin{equation}
\label{21}
\lim_{j \to \infty} \left|\int_{\Omega} (f_{n_{k_j}}(x)-f(x)) \widetilde{E}_j \tilde u_j dx \right| \leq C \lim_{j \to \infty}||f_{n_{k_j}}-f||_p  = 0.
\end{equation}

For the second term we have $|\nabla u|^2 \widetilde{E}_j \tilde u_j \to 0$ a.e. in $\Omega$ as $\tilde u_j \to 0$ a.e. in $\Omega$ and $\widetilde{E}_j$ is bounded. Moreover, for all $j \in \N$,
$$
\left| |\nabla u|^2 \widetilde{E}_j \tilde u_j \right| \leq C C_1 |\nabla u|^2
$$
with $CC_1|\nabla u|^2 \in L^1(\Omega)$. Hence by Lebesgue's dominated convergence theorem we have that
$$
\int_{\Omega} |\nabla u|^2 \widetilde{E}_j \tilde u_j dx \to 0.
$$

To prove that  the third term converges to zero, observe that $\nabla \tilde u_j \rightharpoonup 0$ weakly in $L^2(\Omega)$. Hence if we prove that $\widetilde{E}_j \nabla u (1+ 2 \tilde t \tilde u_j^2) $ converges strongly in $L^2(\Omega)$, we shall obtain
$$\int_{\Omega} \widetilde{E}_j \nabla u \nabla \tilde u_j (1+ 2 \tilde t \tilde u_j^2) dx \to 0.$$

Observe that $\widetilde{E}_j \nabla u (1+ 2 \tilde t \tilde u_j^2) \to \nabla u$ a.e.  in $\Omega$. Moreover we have
$$
\left| \widetilde{E}_j \nabla u (1+ 2 \tilde t \tilde u_j^2) \right| \leq C_1 (1+ 2 \tilde t C^2) |\nabla u| 
\quad \mbox{ with } \quad C_1(1+ 2 \tilde t C^2) \nabla u \in L^2(\Omega).
$$ 
Hence, again by Lebesgue dominated convergence theorem, we have $\widetilde{E}_j \nabla u (1+ 2 \tilde t \tilde u_j^2) \to \nabla u$ strongly in $L^2(\Omega)$. 
For the two last terms observe that
$$
u \widetilde{E}_j \tilde u_j \to 0 \mbox{ a.e. in } \Omega \quad \mbox{ and } \quad  |u \widetilde{E}_j \tilde u_j| \leq CC_1 |u|
$$
with $CC_1 |u| \in L^1(\Omega)$. This holds true also for $f\widetilde{E}_j \tilde u_j$.
Hence again we have 
$$
\int_{\Omega} u \widetilde{E}_j \tilde u_j dx \to 0 \quad\mbox{ and }\quad \int_{\Omega} f \widetilde{E}_j \tilde u_j dx \to 0.
$$
This  implies, by (\ref{rap}), that
$$
\lim_{j \to \infty} ||\tilde u_j||^2 \leq \lim_{j \to \infty} 2 \left( \frac{1}{2} \int_{\Omega} \widetilde{E}_j(1+ 2 \tilde t \tilde u_j^2) |\nabla \tilde u_j|^2 dx + \int_{\Omega} \widetilde{E}_j \tilde u_j^2 dx \right) = 0.
$$
As $\tilde u_j \to 0$ weakly in $H$ we obtain $\tilde u_j \to 0$ strongly in $H$, namely $u_{n_{k_j}} \to u$ strongly in $H$. Hence we can pass to the limit in the equation and  $u \in X$ satisfies
\begin{equation*}
-\Delta u + u - \mu(x) |\nabla u|^2 = f, \quad \mbox{ in } \Omega,
\end{equation*}
At this point we have proved the continuity of $K^{\mu}$ from $L^p(\Omega)$ to $H$. \medskip

\noindent {\bf Step 4.} {\it $K^{\mu}$ is continuous from $L^p(\Omega)$ to $C(\overline\Omega)$.} \medskip 

Let $\{f_n\}\subset  L^p(\Omega)$ be a sequence such that $f_n\to f$ in $L^p(\Omega)$. In particular the sequence $\{f_n\}$ is bounded in $L^p(\Omega)$. Hence, by Step 1, for every subsequence $\{f_{n_k}\}_k$ the set $\{u_{n_k}=K^{\mu}(f_{n_k})\mid k\in \mathbb N\}$ is relatively compact in $C(\overline\Omega)$ i.e. there exists a subsequence $(u_{n_{k_j}})_j$ which converges in $C(\overline\Omega)$ to $v\in C(\overline\Omega)$. By Step 3, $u_{n_{k_j}}\to u = K^{\mu}(f)$ in $H$. In particular 
$u_{n_{k_j}}\to v$ in $C(\overline\Omega)$ and $u_{n_{k_j}}\to u$ in $L^2(\Omega)$. By unicity of the limit, we conclude that $u=v$. As this is true for every subsequence, we have also that, if $f_n\to f$ in $L^p(\Omega)$ then $u_n = K^{\mu}(f_n)\to u = K^{\mu}(f)$ in $C(\overline\Omega)$ which concludes the proof.
\end{proof}


\end{document}